\newtheorem{theorem}{Theorem}[section]
\newtheorem{proposition}{Proposition}[section]
\newtheorem{lemma}{Lemma}[section]
\newtheorem{corollary}{Corollary}[section]
\newtheorem{remark}{Remark}[section]
\numberwithin{equation}{section}
\title[Inverse spectral problems for higher-order coefficients]{Inverse spectral problems for higher-order coefficients}
\author{Mourad Choulli}
\address{Universit\'{e} de Lorraine, 34 cours L\'{e}opold, 54052 Nancy cedex, France}
\email{mourad.choulli@univ-lorraine.fr}
\date{}
\begin{document}
\begin{abstract}
We establish uniqueness and stability inequalities for the problem of determining the higher-order coefficients of an elliptic operator from the corresponding boundary spectral data (BSD). Our analysis relies on the relationship between boundary spectral data and elliptic and hyperbolic Dirichlet to Neumann (DtN) maps. We also show how to adapt our analysis to obtain uniqueness and stability inequalities for determining the conductivity or the potential in an elliptic operator from the corresponding BSD.

\end{abstract}

\subjclass[2010]{35R30, 35R01, 35P99}

\keywords{Boundary spectral data, Dirichlet to Neumann map, elliptic equation,  wave equation, uniqueness, stability inequality, conformal factor, metric, conductivity, potential.}

\maketitle

\tableofcontents

\section{Introduction}\label{s1}

Let $n\ge 3$ be an integer and $\Omega$ be a $C^\infty$ bounded domain of $\mathbb{R}^n$, whose boundary is denoted $\Gamma$.

For simplicity, we use the Einstein summation convention for subscript quantities. This means that a term appearing twice, as a superscript and a subscript, is assumed to be summed from 1 to n.

We denote by $M_n^s$ the space of symmetric matrices $n\times n$. For fixed $\alpha >1$ and $\beta >0$, let $\mathbf{G}$ be the set of matrix-valued functions $g=(g_{k\ell})\in C^\infty(\overline{\Omega}, M_n^s)$ satisfying
\[
\alpha^{-1} |\xi|^2\le g^{k\ell}\xi_k\overline{\xi_\ell}\le \alpha |\xi|^2,\; \xi \in \mathbb{C}^n,\quad \max_{k,\ell}\|g^{k\ell}\|_{C^{0,1}(\overline{\Omega})}\le \beta ,
\]
where $(g^{k\ell}(\cdot)):=g(\cdot)^{-1}$.

We need to recall some usual definitions of Riemannian geometry. The Laplace-Beltrami operator associated with the metric tensor $g_{k\ell}dx^k\otimes dx^\ell$ is given by
\[
\Delta_g u:=\frac{1}{\sqrt{|g|}}\partial_k\left(\sqrt{|g|}g^{k\ell}\partial_\ell u\right),
\]
where $|g|:=\mbox{det} (g)$. 

Unless otherwise stated, the functions we will consider are complex-valued. 

The following notations will also be used
\begin{align*}
&\langle X,Y\rangle_g=g_{k\ell}X^kY^\ell,\quad X=X^k\frac{\partial}{\partial x^k},\; Y=Y^k\frac{\partial}{\partial x^k},
\\
&\nabla_gw=g^{k\ell}\partial_k w\frac{\partial}{\partial x^\ell},
\\
&|\nabla_gw|_g^2=\langle\nabla_gw,\nabla_g\overline{w}\rangle_g=g^{k\ell}\partial_k w\partial_\ell \overline{w}.
\end{align*}

Let $dV_g=\sqrt{|g|}dx$, where $dx$ is the Lebesgue measure on $\mathbb{R}^n$. The natural norms on $L^2(\Omega,dV_g)$ and $H^1(\Omega,dV_g)$ are respectively given by
\begin{align*}
&\|w\|_{0,2,g}:=\left(\int_\Omega |w|^2dV_g\right)^{\frac{1}{2}},
\\
&\|w\|_{1,2,g}:= \left(\int_\Omega \left[|w|^2+|\nabla_gw|_g^2\right]dV_g\right)^{\frac{1}{2}}.
\end{align*}

From now on, $\mathbf{e}=(\delta_{k\ell})$ will denote the identity matrix of $\mathbb{R}^n$. Here $\delta_{k\ell}$ is the usual Kronecker's symbol. That is $\delta_{kk}=1$ and $\delta_{k\ell}=0$ when $k\ne \ell$. In the above norms, we will remove the subscript $g$ when $g=\mathbf{e}$. We verify that the following  inequalities hold.
\begin{align*}
& \alpha^{-\frac{n}{4}} \|w\|_{0,2}\le \|w\|_{0,2,g} \le \alpha^{\frac{n}{4}}\|w\|_{0,2},
\\
& \alpha^{-\frac{n+2}{4}}\|\nabla w\|_{0,2}\le \|\nabla_g w\|_{0,2,g} \le \alpha^{\frac{n+2}{4}}\|\nabla w\|_{0,2} .
\end{align*}

Let $dS_g=\sqrt{|g|}dS$, where $dS$ is the surface measure on $\Gamma$. Unless there is any confusion, the norm of $L^2(\Gamma,dS_g)$ will also be denoted $\|\cdot\|_{0,2,g}$. The scalar products on $L^2(\Omega,dV_g)$ and $L^2(\Gamma,dS_g)$ are denoted respectively as follows.
\begin{align*}
&(u_1|u_2)_g=\int_\Omega u_1\overline{u_2}dV_g,
\\
&\langle f_1|f_2\rangle_g=\int_\Gamma f_1\overline{f_2}dS_g.
\end{align*}

For $g\in \mathbf{G}$, define the unbounded operator $A^g$ by
\[
A^gu=-\Delta_gu,\quad u\in D(A^g)=H_0^1(\Omega)\cap H^2(\Omega).
\]

The sequence of eigenvalues of $A^g$ will be denoted $(\lambda_k^g)$:
\[
0<\lambda_1^g\le \lambda_2^g\le \ldots \le \lambda_k^g\le \ldots, \quad \lim_{k\rightarrow \infty}\lambda_k^g=\infty.
\]
Let $(\phi_k^g)$ be an orthonormal basis of $L^2(\Omega,dV_g)$ consisting of eigenfunctions with
\[
\phi_k^g\in H_0^1(\Omega)\cap H^2(\Omega),\quad A^g\phi_k^g=\lambda_k^g\phi_k^g,\quad k\ge 1. 
\]
The following notation will be used hereinafter
\[
\psi_k^g=\partial_{\nu_g} \phi_k^g:= \langle \nabla_g\phi_k^g,\nu\rangle_g,\quad k\ge 1.
\]
where $\nu$ denotes the outward normal vector field on $\Gamma$ normalized by $\langle \nu,\nu\rangle_g=1$. 

We need  additional definitions. We recall that $\ell^r$, $1\le r<\infty$, denotes the space of complex sequences $(\xi_k)$ such that  $\sum_{k\ge 1}|\xi_k|^r<\infty$. $\ell^r$ is a Banach space when equipped with the norm
\[
\|(\xi_k)\|_{\ell^p}:=\left(\sum_{k\ge 1}|\xi_k|^r\right)^{\frac{1}{r}},\quad (\xi_k)\in \ell^r. 
\]

If $B$ is a Banach space with norm $\|\cdot \|_B$, $\ell^r(B)$ will denote the Banach space of sequences $(w_k)$ in $B$, verifying $\sum_{k\ge 1}\|w_k\|_B^r<\infty$. We equip $\ell^r(B)$ with the norm
\[
\|(w_k)\|_{\ell^r(B)}:=\|(\|w_k\|_B)\|_{\ell^r},\quad (w_k)\in \ell^r(B).
\]

The following property will be useful in the remaining part of this text: if $1\le r_1\le r_2$, then $\ell^{r_1}\subset \ell^{r_2}$ with $\|\cdot \|_{\ell^{r_2}}\le \|\cdot \|_{\ell^{r_1}}$. Therefore, we have also $\ell^{r_1}(B)\subset \ell^{r_2}(B)$ with $\|\cdot \|_{\ell^{r_2}(B)}\le \|\cdot \|_{\ell^{r_1}(B)}$.

Also, $\ell ^1(\mathbb{N}^2)$ will be the Banach space of complex sequences $(\xi_{k\ell})$ satisfying $\sum_{k,\ell \ge 1}|\xi_{k\ell}|<\infty$. The norm on $\ell ^1(\mathbb{N}^2)$ is given by
\[
\|(\xi_{k\ell})\|_{\ell ^1(\mathbb{N}^2)}=\sum_{k,\ell \ge 1}|\xi_{k\ell}|.
\]

Fix $g_0\in \mathbf{G}$ and set
\[
\mathbf{G}_0=\{ g\in \mathbf{G};\; g_{|\Gamma}=g_0\}.
\]

To simplify the notations, any quantity of the form $X_k^{g_j}$ will be written $X_k^j$, $j=1,2$, $k\ge 1$.

Fix $1\le p<\frac{2n}{2n-1}$ and $1\le q<\frac{4n}{4n-1}$ and, for $g_1,g_2\in \mathbf{G}_0$, define
\begin{align*}
&\delta(g_1,g_2):=\|(\lambda_k^1-\lambda_k^2)\|_{\ell^p}+\|(\psi_k^1-\psi_k^2)\|_{\ell^q(L^2(\Gamma))},
\\
&\delta_0(g_1,g_2):=\|(\ell^{\frac{7}{4n}}k^{-\frac{1}{4n}}(\delta_{k\ell}-(\phi_k^2|\phi_\ell^1)_{g_1}))\|_{\ell^1(\mathbb{N}^2)},
\\
&\delta_+(g_1,g_2):= \delta(g_1,g_2)+\delta_0(g_1,g_2).
\end{align*}

Finally, we define the extension operators that will be used in the rest of this text. For $\varphi\in  H^{\frac{3}{2}}(\Gamma)$, let $E\varphi$ be the unique element of $H^2(\Omega)$ satisfying
\[
\|E\varphi\|_{H^2(\Omega)}=\|\varphi\|_{H^{\frac{3}{2}}(\Gamma)}.
\]
Here and henceforth,  $H^{\frac{3}{2}}(\Gamma)$ will be endowed with the quotient norm
\[
\|\varphi\|_{H^{\frac{3}{2}}(\Gamma)}=\min\{\|\Phi\|_{H^2(\Omega)};\; \Phi \in H^2(\Omega),\; \Phi_{|\Gamma}=\varphi\}.
\]

Let $\tau >0$ and $j\ge 1$. For $h\in H^j((0,\tau),H^{\frac{3}{2}}(\Gamma))$, we denote again $Eh$ the extension of $h$ given by 
\[
Eh (\cdot,t)=E(h(\cdot,t)),\quad t\in [0,\tau], 
\]
in such a way that $Eh\in H^j((0,\tau), H^2(\Omega))$.

Unless otherwise specified, the constant $\mathbf{c}>0$ will represent a generic constant depending only on $n$, $\Omega$, $\alpha$, and $\beta$.

\subsection{The conformal factor}\label{sb1.1}

Let $\Omega'$ be a bounded domain of $\mathbb{R}^{n-1}$ of class $C^\infty$ such that $\overline{\Omega} \subset \Omega' \times \mathbb{R}$, and let $g' \in C^\infty(\overline{\Omega'}, M_{n-1}^s)$. For $M':=\overline{\Omega'}$, assume that $(M', g')$ is a simple manifold, that is, for every $x \in M'$, the exponential map, denoted by $\mathrm{exp}_x$, is a diffeomorphism from its maximal domain in $T_xM'$ onto $M'$, and that the boundary $\partial M'$ is strictly convex with respect to the metric $g'$.

Let $\tilde{g}\in C^\infty (\overline{\Omega},M_n^s)$ given
\[
(\tilde{g})_{k\ell}=(g')_{k\ell},\; 1\le k,\ell\le n-1,\quad (\tilde{g})_{kn}=\delta_{kn},\; 1\le k\le n.
\]

In the following, under the conditions above, we will say that $(\Omega, \tilde{g})$ is admissible.

Define
\begin{align*}
&\mathbf{C}=\{c\in C^\infty(\overline{\Omega});\; \alpha^{-1}\le c\le \alpha,\; \|c\|_{C^3(\overline{\Omega})}\le \tilde{\beta} \},
\\
&\mathbf{G}^{\rm{c}}=\{g=c \tilde{g};\; c\in \mathbf{C}\},
\end{align*}
and for fixed $g_0\in \mathbf{G}^{\rm{c}}$,  set
\[
\mathbf{G}_0^{\rm{c}}=\{g\in \mathbf{G}^{\rm{c}};\; g_{|\Gamma}=g_0{_{|\Gamma}}\}.
\]
Clearly, $\mathbf{G}^{\rm{c}}\subset \mathbf{G}$ and $\mathbf{G}_0^{\rm{c}}\subset \mathbf{G}_0$, for some $\beta$ depending only on $\tilde{\beta}$ and $\|\tilde{g}\|_{C^3(\overline{\Omega})}$.

Also, define
\[
\Psi_{\varsigma,\theta}=|\ln(\delta+| \ln \delta|^{-1}) |^{-\theta}\chi_{(0,\varsigma]}+\delta \chi_{(\varsigma,\infty)},\quad 0<\varsigma<e^{-1},\; 0<\theta <1.
\]
We extend $\Psi_{\varsigma,\theta}$ by continuity at $\delta=0$ by setting $\Psi_{\varsigma,\theta}(0)=0$. Here, the notation $\chi_I$ refers to the characteristic function of the interval $I$.

\begin{theorem}\label{thm2.1}
There exist three constants $\kappa>0$, $0<\varsigma<e^{-1}$ and $0<\theta <1$,  depending only on $n$, $\Omega$, $\tilde{g}$, $\alpha$, $\tilde{\beta}$ such that for all $g_1=c_1\tilde{g}\in \mathbf{G}_0^{\rm{c}}$ and $g_2=c_2\tilde{g}\in \mathbf{G}_0^{\rm{c}}$ satisfying $\delta_+(g_1,g_2)<\infty$ we have
\[
\|c_1-c_2\|_\infty \le \kappa \Psi_{\varsigma,\theta}\left(\delta (g_1,g_2)\right),
\] 
where $\|\cdot\|_\infty$ represents the $L^\infty$ norm.
\end{theorem}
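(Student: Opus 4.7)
The plan is to split the argument in two: first translate the boundary spectral data into an elliptic Dirichlet--to--Neumann (DtN) map, and then apply a quantitative Calder\'on-type reconstruction of the conformal factor from that DtN map in the admissible setting $(\Omega,\tilde g)$.

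First, I would establish a quantitative Borg--Levinson-type estimate showing that if $\delta_+(g_1,g_2)$ is finite then the elliptic DtN maps $\Lambda_{g_j}:H^{3/2}(\Gamma)\to L^2(\Gamma)$ are well defined via the spectral series, and
\[
\|\Lambda_{g_1}-\Lambda_{g_2}\|_{\mathcal L(H^{3/2}(\Gamma),L^2(\Gamma))}\le \mathbf c\,\delta(g_1,g_2).
\]
Concretely, acting on an extension $Ef$ of a boundary datum $f$ and taking the normal trace produces a double series in the basis $(\phi_k^{g})$ involving $\lambda_k^g$, $\psi_k^g$ and the inner products $(Ef|\phi_k^g)_g$. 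Subtracting the analogous series for $g_2$ gives three kinds of terms: a diagonal piece estimated by the $\ell^p$-norm of $(\lambda_k^1-\lambda_k^2)$, a normal-derivative piece estimated by the $\ell^q(L^2(\Gamma))$-norm of $(\psi_k^1-\psi_k^2)$, and a ``basis rotation'' term controlled by the overlap deviation $(\delta_{k\ell}-(\phi_k^2|\phi_\ell^1)_{g_1})$ appearing in $\delta_0$. The exponents $p<\tfrac{2n}{2n-1}$, $q<\tfrac{4n}{4n-1}$ and the weights $\ell^{7/(4n)}k^{-1/(4n)}$ are tuned to the Weyl-type asymptotics $\lambda_k^g\sim k^{2/n}$ and boundary-trace bounds for $\psi_k^g$, so that H\"older's inequality against these asymptotics closes each of the three estimates.

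Second, I would invoke a stability result for the conformal Calder\'on problem on the admissible manifold $(\Omega,\tilde g)$. Because $(M',g')$ is simple, one can construct CGO solutions on $\Omega\subset \Omega'\times\mathbb R$ using a limiting Carleman weight associated with the Euclidean $x^n$-direction, which reduces the recovery of $c_1-c_2$ to an attenuated geodesic ray transform on $(M',g')$; the latter is stably invertible under simplicity. Combined with the boundary identity $c_1{|_\Gamma}=c_2{|_\Gamma}$ (inherited from $g_j\in\mathbf G_0^{\mathrm c}$) and the a priori $C^3$-bound in $\mathbf C$, a standard interpolation between a low-frequency Lipschitz estimate and the a priori bound produces a logarithmic modulus
\[
\|c_1-c_2\|_\infty\le \kappa'\,|\ln(\eta+|\ln\eta|^{-1})|^{-\theta},\qquad \eta=\|\Lambda_{g_1}-\Lambda_{g_2}\|,
\]
for some $\theta\in(0,1)$ and small $\eta$. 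Chaining this with the first step, and handling the regime $\delta(g_1,g_2)>\varsigma$ via the trivial bound $\|c_1-c_2\|_\infty\le 2\alpha$, yields the piecewise modulus $\Psi_{\varsigma,\theta}$ and the claimed inequality.

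The main obstacle is the spectral-to-DtN step. Only the differences $\lambda_k^1-\lambda_k^2$ and $\psi_k^1-\psi_k^2$ come with good summability, while the individual eigenfunctions $\phi_k^{g_j}$ cannot be compared pointwise; the overlap matrix $(\phi_k^2|\phi_\ell^1)_{g_1}$ and its weighted distance $\delta_0$ from the identity must be used to re-expand one orthonormal family in terms of the other and to absorb the resulting basis-rotation remainder into $\delta(g_1,g_2)$ after summation, with exactly the weights and summability indices imposed above. Once this DtN stability estimate is secured, the admissible Calder\'on machinery delivers the double-logarithmic modulus $\Psi_{\varsigma,\theta}$ more or less routinely; the $+|\ln\delta|^{-1}$ regularization inside $\Psi_{\varsigma,\theta}$ is there to control the chaining near $\delta=0$.
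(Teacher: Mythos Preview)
Your two-step strategy --- pass from boundary spectral data to the elliptic DtN map, then invoke Caro--Salo type stability for the conformal Calder\'on problem on an admissible manifold --- is exactly the paper's route, and your description of the second step is accurate. The issue is in the first step.

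The direct ``expand $\Lambda_{g_j}(0)(\varphi)$ as a spectral series and subtract'' approach you sketch does not work as stated. From the identity $u^g(0)(\varphi)=-\sum_k(\lambda_k^g)^{-1}\langle\varphi|\psi_k^g\rangle_g\,\phi_k^g$, the formal series for $\partial_{\nu_g}u^g(0)(\varphi)$ has general term of size roughly $(\lambda_k^g)^{-1}\|\psi_k^g\|_{L^2(\Gamma)}^2\sim k^{-2/n}k^{7/(2n)}=k^{3/(2n)}$, which does \emph{not} converge in $L^2(\Gamma)$; so there is nothing to difference termwise. The paper's device is to differentiate $j_0=\lfloor\tfrac{n+3}{4}\rfloor+1$ times in the spectral parameter $\lambda$, where the series for $(\Lambda^g)^{(j)}(\lambda)(\varphi)$ \emph{does} converge in $L^2(\Gamma)$, write Taylor's formula
\[
\Lambda^1(0)-\Lambda^2(0)=\sum_{j=0}^{j_0-1}(-\lambda)^j\bigl[(\Lambda^1)^{(j)}(\lambda)-(\Lambda^2)^{(j)}(\lambda)\bigr]+\Upsilon(\lambda),
\]
show that the integral remainder $\Upsilon(\lambda)$ is bounded by $\mathbf c\,\delta(g_1,g_2)$ uniformly in $\lambda$, and then send $\lambda\to\infty$ so that the finite sum vanishes. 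This limiting argument, not a direct estimate, is the missing idea.

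A related point: you say the ``basis rotation'' term is absorbed into $\delta(g_1,g_2)$. In the paper it is not. The remainder $\Upsilon$ involves only the boundary traces $\psi_k^j$ (no interior eigenfunctions), so it is controlled by $\delta$ alone. The overlap quantity $\delta_0(g_1,g_2)$ enters only \emph{qualitatively}, to guarantee that the term $u_{3,j}$ carrying the eigenfunction mismatch $\phi_k^1-\phi_k^2$ goes to zero as $\lambda\to\infty$; it never appears in the final bound. This is why the theorem's right-hand side depends on $\delta(g_1,g_2)$ and not on $\delta_+(g_1,g_2)$.
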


\subsection{The metric}\label{sb1.2}

First, we recall a stability inequality concerning the determination of the metric from the hyperbolic Dirichlet to Neumann map.

Let $\tilde{\mathbf{G}}$ denotes the set of functions $g\in C^\infty(\overline{\Omega},M_n^s)$ such that $g(x)$ is positive definite for all $x\in \overline{\Omega}$. For fixed $\tau >0$, set $Q=\Omega \times (0,\tau)$ and $\Sigma=\Gamma \times (0,\tau)$. Define 
\[H_{0,}^1(\overline{\Sigma})=\{h\in H^1(\overline{\Sigma});\; h(\cdot ,0)=0\}. 
\]
According to \cite[Theorem 2.30]{KKL} for all $h\in H_{0,}^1(\overline{\Sigma})$ the IBVP
\[
\left\{
\begin{array}{ll}
(\partial_t^2-\Delta_g)w=0\quad \mathrm{in}\; Q,
\\
w(0,\cdot)=0,
\\
\partial_tw(0,\cdot)=0,
\\
w=h \quad \mathrm{on}\;  \Sigma
\end{array}
\right.
\]
admits a unique solution $u^g(h)\in C([0,\tau],H^1(\Omega))\cap C^1([0,\tau],L^2(\Omega))$ such that $\partial_{\nu_g}w^g(h)\in L^2(\Sigma)$ (hidden regularity). Furthermore, the mapping 
\[
\Pi^g: h\in H_{0,}^1(\overline{\Sigma})\mapsto \partial_{\nu_g}w^g(h)\in L^2(\Sigma)
\]
is continuous. The operator $\Pi^g$ is usually called the hyperbolic DtN map associated with $g$.

As a special case of \cite[Theorem 1]{SU}, we have the following result.

\begin{theorem}\label{thm2.2}
There exist an integer $m\ge 2$, an open dense set in $C^m(\overline{\Omega})$ of simple metrics, denoted $\mathcal{O}_m$, and $0<\mu <1$ such that for all $\tilde{g}\in \mathcal{O}_m$ and $\tau >\mathrm{diam}_{\tilde{g}}(\Omega)$, there exist $\eta >0$ and $\mathbf{r}>0$ with the property that if $g_1,g_2\in \tilde{\mathbf{G}}$ satisfy
\begin{equation}\label{2.17}
\| g_j-\tilde{g}\|_{C(\overline{\Omega})}\le \eta,\quad \| g_j\|_{C^m(\overline{\Omega})}\le \mathbf{r},\quad j=1,2,
\end{equation}
then one can find a $C^3$-diffeomophism $F:\overline{\Omega}\rightarrow \overline{\Omega}$ satisfying $F_{|\Gamma}=I$ such that
\begin{equation}\label{2.18}
\|g_1-F^\ast g_2\|_{C^2(\overline{\Omega})}\le\tilde{c}\left\| \Pi^1-\Pi^2  \right\|_{\mathscr{B}(H_{0,}^1(\Sigma), L^2(\Sigma))}^\mu, 
\end{equation}
where the constant $\tilde{c}>0$ depends only on $n$, $\Omega$, $\tau$, $\tilde{g}$, $m$, $\mu$, $\eta$ and $\mathbf{r}$.
\end{theorem}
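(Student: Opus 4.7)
The plan is to verify Theorem \ref{thm2.2} as a direct specialization of \cite[Theorem 1]{SU}; the task is not to reprove it but to match notation and pin down how $m$, $\mathcal{O}_m$, $\mu$, $\eta$ and $\mathbf{r}$ arise.

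First, I would record the gauge invariance that forces the diffeomorphism $F$ into the statement: for any diffeomorphism $F:\overline{\Omega}\to\overline{\Omega}$ with $F_{|\Gamma}=I$, the wave equation is preserved by $g\mapsto F^*g$ in a way that leaves the hyperbolic DtN map unchanged, so no stability estimate can compare $g_1$ and $g_2$ directly; only the pullback version \eqref{2.18} is available. The simple-metric assumption on $\tilde{g}$ together with $\tau>\mathrm{diam}_{\tilde{g}}(\Omega)$ enter because \cite{SU} linearize the problem and reduce it, modulo a tensorial gauge, to the invertibility and sharp stability of the geodesic X-ray transform on symmetric $2$-tensors, which are available on simple manifolds provided the observation time exceeds the diameter so that every geodesic is seen.

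Second, I would translate the smallness hypothesis \eqref{2.17}. The $C(\overline{\Omega})$-closeness to $\tilde{g}$ keeps $g_1,g_2$ inside the open set of simple metrics (the exponential map depends continuously on $g$ and strict convexity of $\Gamma$ is an open condition), while the uniform $C^m$ bound provides the extra regularity needed to interpolate between the sharp stability of the linearized problem and the operator norm of $\Pi^1-\Pi^2$; the H\"older exponent $0<\mu<1$ in \eqref{2.18} is precisely this interpolation exponent. The set $\mathcal{O}_m$ is then the open dense subset of simple metrics inside $C^m(\overline{\Omega})$. Finally, the diffeomorphism $F$ is constructed in \cite{SU} using boundary normal coordinates and a harmonic extension, starting from boundary determination of the full metric jet (a classical consequence of the DtN map being a pseudodifferential operator whose symbol encodes the jet); choosing $m$ large enough absorbs the regularity losses so that $F\in C^3$ and $g_1-F^*g_2$ is controlled in $C^2$.

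The main obstacle in writing a self-contained argument would be the careful bookkeeping of regularity indices at each step (linearization, solution of the resulting pseudolinear equation, construction of $F$) together with the real interpolation inequality producing $\mu$; but since Theorem \ref{thm2.2} is merely a restatement of \cite[Theorem 1]{SU} with the minor adjustments above, this bookkeeping is already handled there and the remaining work here is limited to verifying that our hypotheses fit into their framework and that the constants depend only on the indicated parameters.
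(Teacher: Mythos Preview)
Your proposal is correct and matches the paper's approach exactly: the paper does not prove Theorem \ref{thm2.2} at all but simply records it ``as a special case of \cite[Theorem 1]{SU}''. Your discussion of gauge invariance, the role of simplicity and $\tau>\mathrm{diam}_{\tilde g}(\Omega)$, and the origin of $m$, $\mathcal{O}_m$, $\mu$, $\eta$, $\mathbf{r}$ is accurate commentary, but strictly more than the paper provides.
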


We point out that an earlier version of Theorem \ref{thm2.2} was established by the same authors in \cite{SU1} with $\mathbf{e}$ instead of $\tilde{g}$. A stability inequality from the knowledge of a partial hyperbolic Dirichlet to Neumann map around  $\mathbf{e}$ was obtained by Bellassoued in \cite{Bella}.

Fix $\tilde{g}\in \mathcal{O}_m$ and let $\mathcal{G}$ be the set of all $g\in \tilde{\mathbf{G}}$ satisfying \eqref{2.17}. In what follows, by shrinking if necessary $\eta$ and $\mathbf{r}$, we  assume that $\mathcal{G}\subset \mathbf{G}$, for some $\alpha >1$ and $\beta >0$ depending on $\eta$ and $\mathbf{r}$.

\begin{theorem}\label{thm2.3}
Let $g_1,g_2\in\mathcal{G}$ such that $\delta_0(g_1,g_2)<\infty$ and
\begin{equation}\label{mm1}
\lambda_k^1=\lambda_k^2,\quad \psi_k^1=\psi_k^2,\quad k\ge 1,
\end{equation}
Then there exists a $C^3$-diffeomophism $F:\overline{\Omega}\rightarrow \overline{\Omega}$ satisfying $F_{|\Gamma}=I$ such that $g_1=F^\ast g_2$.
\end{theorem}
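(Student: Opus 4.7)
The plan is to reduce Theorem \ref{thm2.3} to Theorem \ref{thm2.2} by showing that, under \eqref{mm1} together with the finiteness of $\delta_0(g_1,g_2)$, the hyperbolic Dirichlet-to-Neumann maps coincide: $\Pi^1=\Pi^2$ as bounded operators from $H_{0,}^1(\overline{\Sigma})$ into $L^2(\Sigma)$. Once this identity is secured, inequality \eqref{2.18} forces $\|g_1-F^*g_2\|_{C^2(\overline{\Omega})}=0$ for some $C^3$-diffeomorphism $F:\overline{\Omega}\to\overline{\Omega}$ with $F_{|\Gamma}=I$, which is exactly the claim.

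To obtain $\Pi^1=\Pi^2$ I would use the standard spectral representation of the solution of the wave equation. Writing $w^g(h)=Eh+v^g$, where $v^g$ has homogeneous Dirichlet trace on $\Sigma$ and solves an inhomogeneous wave equation with source $f^g=\Delta_g(Eh)-\partial_t^2(Eh)$ (together with initial data generated by $Eh$), one expands $v^g(\cdot,t)=\sum_k v_k^g(t)\phi_k^g$; Duhamel's formula then expresses each coefficient $v_k^g(t)$ through $\sqrt{\lambda_k^g}$ and the bulk inner products $\big(f^g(\cdot,s)\,\big|\,\phi_k^g\big)_g$, $\big(Eh(\cdot,0)\,\big|\,\phi_k^g\big)_g$, $\big(\partial_t Eh(\cdot,0)\,\big|\,\phi_k^g\big)_g$. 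Taking the normal trace on $\Sigma$ yields
\[
\Pi^g h=\partial_{\nu_g}(Eh)|_\Sigma+\sum_{k\ge 1} v_k^g(t)\,\psi_k^g,
\]
a series converging in $L^2(\Sigma)$ for sufficiently smooth $h$. Under \eqref{mm1}, the factors $\sqrt{\lambda_k^g}$ and $\psi_k^g$ are common to $g_1$ and $g_2$, so the only metric-dependence remaining lies in the bulk inner products.

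The crucial step is then to compare $\big(\,\cdot\,\big|\,\phi_k^{2}\big)_{g_2}$ with $\big(\,\cdot\,\big|\,\phi_k^{1}\big)_{g_1}$, by re-expressing the former in the basis $(\phi_\ell^{1})$ through the cross Gram matrix $\big(\phi_k^{2}\,\big|\,\phi_\ell^{1}\big)_{g_1}$. This is exactly the role of the hypothesis $\delta_0(g_1,g_2)<\infty$: the weighted $\ell^1(\mathbb N^2)$ norm with weights $\ell^{7/(4n)}k^{-1/(4n)}$ is precisely calibrated to dominate the Weyl-type growth $\lambda_k^g\sim k^{2/n}$ and the boundary growth of $\|\psi_k^g\|_{L^2(\Gamma)}$, thereby guaranteeing absolute convergence of the double series obtained after substitution and legitimising term-by-term rearrangement. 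After rearranging, \eqref{mm1} forces the series for $\Pi^1 h$ and $\Pi^2 h$ to coincide on a dense class of smooth $h\in H_{0,}^1(\overline{\Sigma})$, and $\Pi^1=\Pi^2$ then follows by continuity of the DtN maps.

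\textbf{Main obstacle.} The principal difficulty is the convergence analysis just described: one must carefully balance the summability built into $\delta_0$ against the unfavourable growth of $\lambda_k^g$ and of the boundary norms of $\psi_k^g$, and justify every exchange of summations. Once $\Pi^1=\Pi^2$ has been obtained, the conclusion is immediate from Theorem \ref{thm2.2} applied to $g_1,g_2\in\mathcal{G}$ with vanishing right-hand side in \eqref{2.18}.
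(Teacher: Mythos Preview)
Your overall plan---prove $\Pi^1=\Pi^2$ on a dense subspace of $H_{0,}^1(\overline{\Sigma})$ and then invoke Theorem~\ref{thm2.2}---is exactly the paper's strategy. The paper, however, does not obtain $\Pi^1=\Pi^2$ by a direct Duhamel expansion; it deduces it from the quantitative estimate of Theorem~\ref{mt2}: since \eqref{mm1} gives $\delta(g_1,g_2)=0$ and $\delta_0(g_1,g_2)<\infty$ ensures $\delta_+(g_1,g_2)<\infty$, Theorem~\ref{mt2} forces $\Pi^1=\Pi^2$ on $\mathcal{H}^{2n+4}((0,\tau),H^{3/2}(\Gamma))$, hence on all of $H_{0,}^1(\Sigma)$ by density. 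The proof of Theorem~\ref{mt2} in turn rests on the decomposition of Lemma~\ref{lemwave}, expressing $\Pi^g(h)$ as a finite sum of elliptic DtN derivatives $(\Lambda^g)^{(j)}(0)((-\partial_t^2)^j h)$ plus a remainder whose spectral series (Lemma~\ref{lemwave2}) carries an extra decay factor $(\lambda_k^g)^{-(n+2)}$ guaranteeing $H^{7/4}$-convergence. The elliptic pieces are handled by Theorem~\ref{mt1}, and it is there---through the term $u_{3,j}$ in Lemma~\ref{lemo1}---that $\delta_0$ actually enters: it controls $\phi_k^1-\phi_k^2$ via the coefficients $\delta_{k\ell}-(\phi_k^2|\phi_\ell^1)_{g_1}$, rather than serving to re-express bulk Duhamel pairings as you suggest.

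Two technical caveats on your direct route. First, one integration by parts shows that $a_k(t)=(w^g(\cdot,t)|\phi_k^g)_g$ solves $a_k''+\lambda_k^g a_k=-\langle h(\cdot,t)|\psi_k^g\rangle_g$, so the coefficients are determined by \emph{boundary} pairings only and coincide for $g_1,g_2$ under \eqref{mm1}; the Gram matrix is then not needed at this stage, but one must justify taking $\partial_{\nu_g}$ term by term, which requires convergence in $H^{7/4}(\Omega)$ rather than $L^2$---precisely what the paper's elliptic detour is engineered to produce. Second, if you insist on comparing bulk pairings $(\,\cdot\,|\phi_k^2)_{g_2}$ via the Gram matrix in $\delta_0$, note that the latter is taken with respect to $dV_{g_1}$, not $dV_{g_2}$, so a change-of-volume factor would have to be absorbed; this mismatch is avoided entirely in the paper's argument.
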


Let $\Sigma$ be a measurable set of $\Gamma$ having the property that there exist $\tilde{x}\in \Gamma$ and $\tilde{r}>0$ such that
\[
|\Sigma \cap B(\tilde{x},r)|>0,\; \quad 0<r\le \tilde{r},
\]
where $B(\tilde{x},r)$ denotes the ball of center $\tilde{x}$ and radius $r$. Let $N$ be an arbitrary fixed neighborhood of $\Gamma$ in $\overline{\Omega}$.

\begin{corollary}\label{cor1.1}
Let $g_1,g_2\in\mathcal{G}$ such that $g_1{_{|N}}=g_2{_{|N}}$, $\delta_0(g_1,g_2)<\infty$ and
\[
\lambda_k^1=\lambda_k^2,\quad \psi_k^1{_{|\Sigma}}=\psi_k^2{_{|\Sigma}},\quad k\ge 1.
\]
Then  there exists a $C^3$-diffeomophism $F:\overline{\Omega}\rightarrow \overline{\Omega}$ satisfying $F_{|N}=I$ such that $g_1=F^\ast g_2$.
\end{corollary}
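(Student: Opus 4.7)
The plan is to reduce Corollary~\ref{cor1.1} to Theorem~\ref{thm2.3} in two stages, both exploiting the hypothesis $g_1{_{|N}}=g_2{_{|N}}$.

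The first stage extends the partial equality $\psi_k^1{_{|\Sigma}}=\psi_k^2{_{|\Sigma}}$ to the full equality $\psi_k^1=\psi_k^2$ on all of $\Gamma$. Since the two metrics coincide on $N$, each pair of eigenfunctions $(\phi_k^1,\phi_k^2)$ solves the common Helmholtz-type equation $(-\Delta_g-\lambda_k)u=0$ in $\Omega\cap N$ with zero Dirichlet trace on $\Gamma$, where $\lambda_k:=\lambda_k^1=\lambda_k^2$. The difference $w_k=\phi_k^1-\phi_k^2$ therefore satisfies the same elliptic equation in $\Omega\cap N$ with $w_k{_{|\Gamma}}=0$ and $\partial_{\nu_g}w_k{_{|\Sigma}}=0$. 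A Cauchy uniqueness argument for second-order elliptic operators, first propagating the equality from $\Sigma$ to a relatively open piece of $\Gamma$ near $\tilde{x}$ (for instance by extending $w_k$ by zero across $\Gamma$ via the smooth extension of $g$ available on $N$ and invoking unique continuation for the resulting distributional equation), and then to all of $\Gamma$ by classical Cauchy uniqueness from an open boundary piece combined with connectedness of $\Omega\cap N$, yields $\psi_k^1=\psi_k^2$ throughout $\Gamma$.

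The second stage applies Theorem~\ref{thm2.3}: with $\lambda_k^1=\lambda_k^2$, $\psi_k^1=\psi_k^2$ on $\Gamma$ for every $k$, and $\delta_0(g_1,g_2)<\infty$, we obtain a $C^3$-diffeomorphism $F_0:\overline\Omega\to\overline\Omega$ with $F_0|_\Gamma=I$ and $g_1=F_0^*g_2$. Since $g_1=g_2$ on $N$, the identity $F_0^*g_2=g_2$ holds on $N\cap F_0^{-1}(N)$, so $F_0$ restricts to a local isometry of $(\Omega,g_2)$ near $\Gamma$. The boundary condition $F_0|_\Gamma=I$ forces $dF_0|_\Gamma=I$: the tangential component is immediate, and the normal component follows because an isometry sends the outward unit normal of $\Gamma$ to itself. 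By the rigidity of Riemannian isometries (determined by their 1-jet at any point), $F_0$ equals the identity on each connected component of $N$ meeting $\Gamma$; after possibly replacing $N$ by the union of its components adjacent to $\Gamma$, one obtains $F_0|_N=I$, and $F:=F_0$ satisfies the conclusion.

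The principal obstacle is the first stage, because $\Sigma$ is only required to have positive surface measure in every small ball around $\tilde{x}$ rather than to contain a relatively open subset of $\Gamma$. Classical Cauchy uniqueness from an open boundary piece therefore does not apply directly, and the positive-measure density must be combined with the common elliptic structure on $\Omega\cap N$, via a suitably refined unique continuation statement, to first propagate the equality $\psi_k^1=\psi_k^2$ to an open neighborhood of $\tilde{x}$ in $\Gamma$ before the classical argument can be invoked.
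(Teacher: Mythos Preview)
Your approach coincides with the paper's: use that $g_1=g_2$ on $N$ and $\lambda_k^1=\lambda_k^2$ to see that $\phi_k^1-\phi_k^2$ solves $(-\Delta_g-\lambda_k)w=0$ in $N$ with vanishing Cauchy data on $\Sigma$, invoke a unique-continuation result valid for the positive-measure set $\Sigma$ (the paper cites \cite[(2.20)]{CY} for exactly the refined statement you flag as the principal obstacle) to obtain $\phi_k^1=\phi_k^2$ in $N$ and hence $\psi_k^1=\psi_k^2$ on all of $\Gamma$, and then apply Theorem~\ref{thm2.3}. Your second stage actually goes beyond the paper: the paper's proof ends with ``apply Theorem~\ref{thm2.3},'' which only yields $F|_\Gamma=I$, whereas you supply the isometry-rigidity argument (from $g_1=F^\ast g_2$ and $g_1=g_2$ on $N$) needed to upgrade this to $F|_N=I$, a step the paper leaves implicit.
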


\begin{proof}
Let $g=g_1{_{|N}}=g_2{_{|N}}$. As $\lambda_k^1=\lambda_k^2$ for all $k\ge 1$, we have
\[
-\Delta_g\left(\phi_k^1-\phi_k^2\right)=\lambda_k^1\left(\phi_k^1-\phi_k^2\right)\; \mathrm{in}\; N,\quad k\ge 1.
\]
This, $\psi_k^1{_{|\Sigma}}=\psi_k^2{_{|\Sigma}}$ for all $k\ge 1$, combined with \cite[(2.20)]{CY} (uniqueness of continuation from the Cauchy data on $\Sigma$), imply that $\phi_k^1=\phi_k^2$ in $N$ for all $k\ge 1$. Therefore, \eqref{mm1} holds. We complete the proof by applying Theorem \ref{thm2.3}.
\end{proof}

\begin{remark}
{\rm
By combining  Theorem \ref{mt1} with $j=0$ and \cite[Corollary E]{LU}, we observe that Theorem \ref{thm2.3} remains valid under the following assumptions: $\Omega$ is bounded strictly convex domain with real-analytic boundary, $g_1$, $g_2$ are real-analytic and belong to some neighborhood of $\mathbf{e}$ in $C^2$. In this case the diffeomorphism $F$ is real-analytic.
}
\end{remark}

\subsection{Comments} \label{sb1.3}

In the present work, we extend and improve the initial results obtained in \cite{CY} regarding the determination of higher-order coefficients of an elliptic operator from  the corresponding BSD. We refine the analysis in \cite{CY} by placing more emphasis on the relationship between the BSD and DtN maps. We also show how to modify our analysis to establish stability inequalities for determining the conductivity or potential in an elliptic operator from  the corresponding BSD.

The uniqueness of the determination of a Riemannian manifold from the corresponding BSD goes back to Belishev and Kurylev \cite{Bel92}. Their proof is based on the so-called boundary control (BC) method.

More is known about the determination of the potential and magnetic field appearing in a Schr\"odinger equation from the corresponding BSD. The first uniqueness result was proved by Nachman, Sylvester, and Uhlamnn \cite{NSU}. The corresponding stability inequality was proved by Alessandrini and Sylvester \cite{AS} (see also \cite{Ch09} where the stability inequality is reformulated in a suitable topology). Recently, stability inequalities have been established in \cite{BCDKS} for a magnetic Schr\"odinger equation in a simple Riemannian manifold based on the Isozaki method, which was introduced in \cite{Is}. This method is inspired by the Born approximation method. Very recently, Liu, Quan, Saksala and Yan \cite{LQSY} extended the result of \cite{Ch09} to a magnetic Schrödinger equation in a simple Riemannian manifold.

The case of the Schr\"odinger equation with unbounded potential was first studied by P\"aiv\"arinta and Serov \cite{PS}, where a uniqueness result was established. The uniqueness result for the optimal class of unbounded potentials was obtained by Pohjola \cite{Po}. The corresponding stability inequality was proved by the author in \cite{Ch24}. The case of the Schr\"odinger equation with unbounded potential and the Robin boundary condition were recently studied in \cite{CMS1,CMS2}, where several uniqueness and stability results were obtained.

Since the pioneering article by Nachman, Sylvester and Uhlamnn \cite{NSU}, the literature on the multidimensional inverse spectral problems has continued to grow. Without being exhaustive, we refer to \cite{BCY,BCY2,BD,BKMS,Ch1,ChS,KK,KKL,KKS,Ki1,KOM,KS,So}.

It should be noted that the method used in this article is indirect. A direct method, such as that introduced by Isozaki \cite{Is} in the isotropic case, appears difficult to implement in the anisotropic case to obtain stability inequalities for the determination of higher-order coefficients of an elliptic operator from the corresponding BSD. 


The issue of stability, in relation to the uniqueness result of Theorem \ref{thm2.3}, remains an open problem, as does the case of the partial BSD, where a finite number of eigenvalues and the trace of the normal derivative of the eigenfunctions are assumed to be unknown.

\section{Preliminaries}\label{s2}

\subsection{Weyl's asymptotic formula}\label{sb2.1}

\begin{lemma}\label{lem1}
There exists a constant $\varkappa >1$  depending only on $\Omega$ such that 
\begin{equation}\label{ee}
\vartheta ^{-1} k^{\frac{2}{n}}\le \lambda_k^g \le \vartheta k^{\frac{2}{n}},\quad k\ge 1,\; g\in \mathbf{G},
\end{equation}
where $\vartheta =\varkappa\alpha^{\frac{n+2}{2}}$.
\end{lemma}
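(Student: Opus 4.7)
My plan is to compare $\lambda_k^g$ to the eigenvalues $\lambda_k^{\mathbf{e}}$ of the standard Dirichlet Laplacian on $\Omega$ via the Courant--Fischer min-max principle, and then invoke the classical two-sided Weyl bound for $\lambda_k^{\mathbf{e}}$. The point that makes this work uniformly in $g\in\mathbf{G}$ is that the form domain $H_0^1(\Omega)$ of $A^g$ is independent of $g$, so the comparison of eigenvalues reduces entirely to a pointwise comparison of Rayleigh quotients on a $g$-independent space of trial functions.

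Concretely, I would first write
\[
\lambda_k^g \;=\; \min_{\substack{V\subset H_0^1(\Omega)\\ \dim V=k}}\;\max_{u\in V\setminus\{0\}}\;\frac{\|\nabla_g u\|_{0,2,g}^2}{\|u\|_{0,2,g}^2},
\]
and the analogous formula for $\lambda_k^{\mathbf{e}}$. The two norm equivalences recalled just above the statement of the lemma yield, for every $u\in H_0^1(\Omega)\setminus\{0\}$, the pointwise comparison
\[
\alpha^{-(n+1)}\,\frac{\|\nabla u\|_{0,2}^2}{\|u\|_{0,2}^2}\;\le\;\frac{\|\nabla_g u\|_{0,2,g}^2}{\|u\|_{0,2,g}^2}\;\le\;\alpha^{n+1}\,\frac{\|\nabla u\|_{0,2}^2}{\|u\|_{0,2}^2}.
\]
Feeding this into the $\min\max$ over $V$ (the same class of $k$-dimensional subspaces on both sides) gives at once the two-sided comparison
\[
\alpha^{-(n+1)}\,\lambda_k^{\mathbf{e}}\;\le\;\lambda_k^g\;\le\;\alpha^{n+1}\,\lambda_k^{\mathbf{e}},\qquad k\ge 1.
\]

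It then remains to quote the classical uniform Weyl estimate on the smooth bounded domain $\Omega$: there exists $\varkappa_0=\varkappa_0(\Omega)>1$ such that
\[
\varkappa_0^{-1}\,k^{2/n}\;\le\;\lambda_k^{\mathbf{e}}\;\le\;\varkappa_0\,k^{2/n},\qquad k\ge 1.
\]
The upper bound here follows by Dirichlet--Neumann bracketing (cover $\Omega$ from inside by disjoint cubes of side $h\sim k^{-1/n}$ and use the explicit Dirichlet spectrum on a cube to construct a $k$-dimensional trial subspace), while the lower bound follows either from Berezin--Li--Yau applied to the counting function or, more crudely, from the Weyl upper bound on $N^{\mathbf{e}}(\lambda)$ proved via the heat semigroup. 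Combining this with the previous display yields the conclusion with a constant of the form $\varkappa_0\alpha^{n+1}$ in place of $\vartheta$; the slightly sharper exponent $(n+2)/2$ advertised in the statement would come from keeping the density $\sqrt{|g|}$ and the matrix $g^{k\ell}\sqrt{|g|}$ together and applying a weighted Weyl comparison instead of the naive two-step estimate, but this is a cosmetic refinement of the constant, not of the method.

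I do not anticipate any genuine obstacle. The only ingredient external to the present paper is the uniform Euclidean Weyl bound $\lambda_k^{\mathbf{e}}\asymp_\Omega k^{2/n}$, which is entirely standard; everything else is bookkeeping with the ellipticity constant $\alpha$ and the min-max formula.
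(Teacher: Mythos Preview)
Your proposal is correct and follows essentially the same route as the paper's proof: both arguments compare $\lambda_k^g$ to the Euclidean Dirichlet eigenvalues $\mu_k=\lambda_k^{\mathbf{e}}$ via the min--max principle on the common form domain $H_0^1(\Omega)$, and then invoke the classical two-sided Weyl bound $\varkappa^{-1}k^{2/n}\le\mu_k\le\varkappa k^{2/n}$. The only discrepancy is the value of the comparison constant ($\alpha^{n+1}$ from the naive norm equivalences versus the paper's asserted $\alpha^{(n+2)/2}$), which, as you note, is cosmetic and does not affect the argument.
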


\begin{proof}
Denote by $(\mu_k)$ the sequence of eigenvalues of $A^\mathbf{e}$. Let $V_k$ be the set of all subspaces of $H_0^1(\Omega)$ of dimension $k$. According to the min-max principle, we have
\begin{equation}\label{ee1}
\alpha^{-\frac{n+2}{2}} \mu_k\le \min_{E\in V_k}\max_{u\in E\setminus\{0\}}\frac{\||g|^{\frac{1}{4}}\sqrt{g^{-1}}\nabla u\|_{0,2}^2}{\||g|^{\frac{1}{4}}u\|_{0,0}^2}=\lambda_k^g  \le \alpha^{\frac{n+2}{2}}\mu_k.
\end{equation}
On the other hand, we know that there exists a constant $\varkappa >1$ depending only on $\Omega$ such that
\[
\varkappa^{-1}k^{\frac{2}{n}}\le \mu_k\le \varkappa k^{\frac{2}{n}}.
\]
This and \eqref{ee1} yield \eqref{ee}.
\end{proof}

\subsection{Useful formula}\label{sb2.2}

Let $g\in \mathbf{G}$.  Recall that the resolvent set of $-A^g$, usually denoted $\rho(-A^g)$, is given by
\[
\rho(-A^g)=\mathbb{C}\setminus\{-\lambda_k^g;\; k\ge 1\}.
\]
For later use, note that $(-\vartheta^{-1},\infty )\subset \rho(-A^g)$ for all $g\in \mathbf{G}$, where $\vartheta$ is as in Lemma \ref{lem1}.

For $g\in \mathbf{G}$, $\varphi \in H^{\frac{3}{2}}(\Gamma)$ and $\lambda \in  \rho(-A^g)$, $u ^g(\lambda)(\varphi)\in H^2(\Omega)$ will denote the solution of the BVP
\[
-\Delta_gu+\lambda u=0\; \mathrm{in}\; \Omega ,\quad u_{|\Gamma}=\varphi.
\]

\begin{lemma}\label{lem2}
For all $\lambda \in  \rho(-A^g)$, $g\in \mathbf{G}$ and $\varphi \in H^{\frac{3}{2}}(\Gamma)$, we have
\begin{equation}\label{f1}
u^g(\lambda)(\varphi)=-\sum_{k\ge 1}\frac{\langle\varphi| \psi_k^g\rangle_g}{\lambda_k^g+\lambda}\phi_k^g.
\end{equation}
\end{lemma}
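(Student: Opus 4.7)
The plan is to use the fact that $(\phi_k^g)_k$ is an orthonormal basis of $L^2(\Omega,dV_g)$, so $u:=u^g(\lambda)(\varphi)\in H^2(\Omega)\subset L^2(\Omega,dV_g)$ admits the expansion
\[
u=\sum_{k\ge 1}(u|\phi_k^g)_g\,\phi_k^g
\]
convergent in $L^2(\Omega,dV_g)$. The whole task then reduces to identifying the Fourier coefficients $(u|\phi_k^g)_g$ with $-\langle\varphi|\psi_k^g\rangle_g/(\lambda_k^g+\lambda)$.

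To compute $(u|\phi_k^g)_g$ I would apply Green's formula on $(\Omega,g)$ to the pair $(u,\overline{\phi_k^g})$. Both functions lie in $H^2(\Omega)$, so the integration by parts is justified:
\[
\int_\Omega\bigl(\overline{\phi_k^g}\,\Delta_g u-u\,\Delta_g\overline{\phi_k^g}\bigr)\,dV_g
=\int_\Gamma\bigl(\overline{\phi_k^g}\,\partial_{\nu_g}u-u\,\partial_{\nu_g}\overline{\phi_k^g}\bigr)\,dS_g.
\]
The boundary term involving $\overline{\phi_k^g}$ vanishes because $\phi_k^g\in H_0^1(\Omega)$, while the other boundary term equals $-\langle\varphi|\psi_k^g\rangle_g$ after using $u_{|\Gamma}=\varphi$ and the real-valuedness of $\psi_k^g$ (coming from the real-valuedness of $\phi_k^g$, which may be assumed by choosing real eigenfunctions; otherwise one just notes that $\partial_{\nu_g}\overline{\phi_k^g}=\overline{\psi_k^g}$ and the inner product notation $\langle\cdot|\cdot\rangle_g$ absorbs the complex conjugate appropriately).

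On the left-hand side I would substitute $\Delta_g u=\lambda u$ (from the equation $-\Delta_g u+\lambda u=0$) and $\Delta_g\overline{\phi_k^g}=-\lambda_k^g\overline{\phi_k^g}$ (since $\lambda_k^g\in\mathbb{R}$), obtaining
\[
(\lambda_k^g+\lambda)(u|\phi_k^g)_g=-\langle\varphi|\psi_k^g\rangle_g.
\]
Because $\lambda\in\rho(-A^g)$ the factor $\lambda_k^g+\lambda$ is nonzero for every $k$, so dividing and inserting the resulting coefficients into the Hilbert-space expansion of $u$ yields \eqref{f1}.

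There is no real obstacle here; the only things to double-check are the admissibility of Green's identity at the $H^2$ level and the sign bookkeeping induced by the Hermitian convention in the definitions of $(\cdot|\cdot)_g$ and $\langle\cdot|\cdot\rangle_g$. Note also that the bound $(-\vartheta^{-1},\infty)\subset\rho(-A^g)$ recorded just before the statement is not needed for the derivation itself, but it guarantees that the formula has content for the range of $\lambda$ used later on.
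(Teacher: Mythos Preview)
Your proposal is correct and matches the paper's own proof essentially line for line: both apply Green's formula to compute $(u|\phi_k^g)_g=-\langle\varphi|\psi_k^g\rangle_g/(\lambda_k^g+\lambda)$ and then insert these coefficients into the orthonormal expansion of $u$. Your extra remarks on the nonvanishing of $\lambda_k^g+\lambda$ and the conjugation conventions are harmless clarifications of the same argument.
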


\begin{proof} 
Let $\lambda\in \rho(-A^g)$, $g\in \mathbf{G}$, $\varphi \in H^{\frac{3}{2}}(\Gamma)$ and $u:=u^g(\lambda)(\varphi)$. Applying twice Green's formula, we get
\begin{align*}
-\lambda \int_\Omega u\overline{\phi_k^g} dV_g=-\int_\Omega \Delta_g u\overline{\phi_k^g} dV_g&  =\int_\Omega \langle \nabla_g u,\nabla_g \overline{\phi_k^g}\rangle_g dV_g
\\
&=-\int_\Omega u \Delta_g \overline{\phi_k^g}  dV_g+\int_\Gamma \varphi \partial_{\nu_g} \overline{\phi_k^g} dS_g
\\
&=\lambda_k^g\int_\Omega u \overline{\phi_k^g}  dV_g+\int_\Gamma \varphi \partial_{\nu_g} \overline{\phi_k^g} dS_g .
\end{align*}
That is, we have
\[
(u|\phi_k^g)_g=-\frac{\langle \varphi|\psi_k^g\rangle_g}{\lambda_k^g+\lambda},\quad k\ge 1.
\]
This identity, combined with  
\[
u=\sum_{k\ge 1}(u|\phi_k^g)_g\phi_k^g,
\]
gives the expected formula.
\end{proof}

\subsection{Elliptic a priori estimates}\label{sb2.3}

In the following, the usual norm of $H ^2(\Omega)$ will be denoted $\|\cdot\|_{2,2}$.

\begin{lemma}\label{lemEae}
We have for all $g\in \mathbf{G}$ and $f\in L^2(\Omega)$ 
\begin{equation}\label{el3}
\|(A^g)^{-1}f\|_{2,2}\le \mathbf{c} \mathbb{c}\|f\|_{0,2}.
\end{equation}
\end{lemma}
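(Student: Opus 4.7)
The plan is to reduce the estimate to a standard $H^2$-regularity theorem for the Dirichlet problem. First I would set $u:=(A^g)^{-1}f$, so that $u\in H_0^1(\Omega)\cap H^2(\Omega)$ solves $-\Delta_g u=f$, and expand the Laplace--Beltrami operator in non-divergence form as
\[
-\Delta_g u=-g^{k\ell}\partial_k\partial_\ell u-\frac{1}{\sqrt{|g|}}\,\partial_k\!\bigl(\sqrt{|g|}\,g^{k\ell}\bigr)\partial_\ell u.
\]
A short preliminary check shows that the defining conditions on $\mathbf{G}$ (uniform ellipticity with constant $\alpha$ and $\|g^{k\ell}\|_{C^{0,1}(\overline{\Omega})}\le\beta$) force $g_{k\ell}$, $\sqrt{|g|}$, and $1/\sqrt{|g|}$ to be Lipschitz on $\overline{\Omega}$ with norms depending only on $n,\alpha,\beta$ (by Cramer's rule together with the uniform bound $|g|\ge \alpha^{-n}$). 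Hence the drift coefficients belong to $L^\infty(\Omega)$ with a bound of the same kind.

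Next I would invoke the classical $H^2$-regularity theorem for the Dirichlet problem on the $C^\infty$-domain $\Omega$ with uniformly elliptic, Lipschitz principal part and bounded lower-order terms (e.g.\ Gilbarg--Trudinger, Theorem~8.12). This yields
\[
\|u\|_{2,2}\le \mathbf{c}\bigl(\|f\|_{0,2}+\|u\|_{0,2}\bigr),
\]
where the constant $\mathbf{c}$ can be taken to depend only on $n,\Omega,\alpha,\beta$ because the cited theorem is uniform in the coefficient class described above.

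To eliminate the $\|u\|_{0,2}$ term I would exploit the spectral bound from Lemma~\ref{lem1}. Since $A^g$ is positive and self-adjoint on $L^2(\Omega,dV_g)$ with smallest eigenvalue $\lambda_1^g\ge \vartheta^{-1}$, the functional calculus gives
\[
\|u\|_{0,2,g}\le \vartheta\,\|f\|_{0,2,g},
\]
and the equivalence $\alpha^{-n/4}\|\cdot\|_{0,2}\le \|\cdot\|_{0,2,g}\le \alpha^{n/4}\|\cdot\|_{0,2}$ recorded in Section~\ref{s1} turns this into $\|u\|_{0,2}\le \mathbf{c}\|f\|_{0,2}$. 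Inserting this in the previous estimate yields \eqref{el3}.

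The argument is essentially bookkeeping rather than anything conceptual, and the main obstacle is precisely that bookkeeping: one has to verify that the constant produced by the quoted $H^2$-regularity theorem really depends on the coefficient data only through the ellipticity constant $\alpha$ and the Lipschitz bound $\beta$ on $g^{k\ell}$, and on $\Omega$ through its $C^2$-geometry. Once this uniformity is in hand, the spectral-gap absorption step via Lemma~\ref{lem1} is immediate.
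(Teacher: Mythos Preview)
Your proposal is correct and follows essentially the same strategy as the paper: write $-\Delta_g$ in non-divergence form, bound the drift coefficients in terms of $\alpha,\beta$, invoke a standard $H^2$-regularity estimate, and then absorb the $\|u\|_{0,2}$ term. The only cosmetic differences are that the paper cites \cite[Theorem~9.14]{GT} (yielding $\|u\|_{2,2}\le \mathbf{c}\|f-\mu u\|_{0,2}$ for a suitable $\mu$) rather than Theorem~8.12, and obtains the $L^2$-bound on $u$ directly via integration by parts and Poincar\'e's inequality rather than through the spectral gap of Lemma~\ref{lem1}; both routes are equivalent here.
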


\begin{proof}
Let $g\in \mathbf{G}$, $f\in L^2(\Omega)$ and $u=(A^g)^{-1}f\in H_0^1(\Omega)\cap H^2(\Omega)$. Then we have
\[
g^{k\ell}\partial_{k\ell}^2u+ b^j \partial_j u=-f,
\]
where 
\[
b^j =\frac{1}{\sqrt{|g|}}\partial_k(\sqrt{|g|}g^{kj}),\quad 1\le j \le n.
\]
We verify that 
\[
\sup_j \|b^j\|_{C^0(\overline{\Omega})}\le \tilde{\beta},
\]
where $\tilde{\beta}>0$ is a constant depending only on $\alpha$ and $\beta$.

It follows from \cite[Theorem 9.14]{GT} that there exists a constant $\mu>0$  depending only on $n$, $\Omega$, $\alpha$ and $\beta$ such that
\begin{equation}\label{el1}
\|u\|_{2,2}\le \mathbf{c}\|f -\mu u\|_{0,0}.
\end{equation}
On the other hand, making an integration by part, we obtain
\[
\int_{\Omega}\sqrt{|g|}g^{-1}\nabla u\cdot \nabla \overline{u}dx=\int_\Omega \sqrt{|g|}f\overline{u} dx.
\]
Hence
\[
\alpha^{-\frac{n+2}{2}} \int_{\Omega}|\nabla u|^2dx\le \int_\Omega \sqrt{|g|} f\overline{u} dx\le \alpha^{\frac{n}{2}}\int_\Omega |fu| dx.
\]
This and Poincaré's inequality  imply
\begin{equation}\label{el2}
\|u\|_{0,0}\le \mathbf{c}\|f\|_{0,0}.
\end{equation}
Putting together \eqref{el1} and \eqref{el2} yield the expected inequality.
\end{proof}

Next, let $\lambda \in \mathbb{R}$, $g\in \mathbf{G}$ and $u\in H_0^1(\Omega)\cap H^2(\Omega)$ satisfying $\|u\|_{0,2,g}=1$ and $-\Delta_gu=\lambda u$ in $\Omega$. As $u=\lambda (A^g)^{-1} u$, applying \eqref{el3}, we obtain
\begin{equation}\label{el4}
\|u\|_{0,2}\le \mathbf{c}|\lambda| .
\end{equation}
Proceeding as in the proof of \eqref{el2}, we get
\begin{equation}\label{el5}
\|u\|_{1,2}\le \mathbf{c} |\lambda|^{\frac{1}{2}}.
\end{equation}

In accordance with the the previous notations,  the norm of $H^{\frac{7}{4}}(\Omega)$  will be denoted $\|\cdot \|_{\frac{7}{4},2}$. From \cite[Theorem 9.6]{LM1}, we have the following interpolation inequality.
\[
\|w\|_{\frac{7}{4},2}\le c_0\|w\|_{1,2}^{\frac{1}{4}}\|w\|_{2,2}^{\frac{3}{4}},\quad w\in H^2(\Omega),
\]
where the constant $c_0>0$ depends only on $n$ and $\Omega$. 

A combination of \eqref{el4} and \eqref{el5} gives
\begin{equation}\label{el6}
\|u\|_{\frac{7}{4},2}\le \mathbf{c}|\lambda|^{\frac{7}{8}},
\end{equation}

In particular, we have from \eqref{el6}
\[
\|\phi_k^g\|_{\frac{7}{4},2}\le \mathbf{c}(\lambda_k^g)^{\frac{7}{8}},\quad k\ge 1,\;  g\in \mathbf{G},
\]
and as the trace operator $w\in H^{\frac{7}{4}}(\Omega)\mapsto \partial_{\nu_g}w\in L^2(\Gamma)$ is continuous, we have also
\begin{equation}\label{el8}
\|\psi_k^g\|_{0,2}\le \mathbf{c}(\lambda_k^g)^{\frac{7}{8}}\quad k\ge 1,\;  g\in \mathbf{G}.
\end{equation}

In light of \eqref{ee}, we have from \eqref{el8} the following result.

\begin{lemma}\label{trb1}
The following estimate holds.
\begin{equation}\label{el10}
\|\psi_k^g\|_{0,2}\le \mathbf{c}k^{\frac{7}{4n}}\quad k\ge 1,\; g\in \mathbf{G}.
\end{equation}
\end{lemma}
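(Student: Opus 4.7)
The plan is simply to chain the upper Weyl bound from Lemma \ref{lem1} into the trace estimate \eqref{el8} already established just above the statement, and then absorb the resulting numerical factor into the generic constant $\mathbf{c}$. Since the author's exposition has done all the real work in the paragraphs preceding the lemma, the remaining task is purely bookkeeping, so my write-up will be short.

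Concretely, I would first observe that \eqref{el8} reads $\|\psi_k^g\|_{0,2}\le \mathbf{c}(\lambda_k^g)^{7/8}$ with a constant depending only on $n,\Omega,\alpha,\beta$. This itself rests on three uniform bounds that are worth recalling to confirm the independence of the constant from $g\in \mathbf{G}$: the $H^2$-resolvent bound \eqref{el3} (via \cite[Theorem 9.14]{GT}), which uses the uniform ellipticity and the uniform $C^{0,1}$ bound on $g^{k\ell}$; the $H^1$-energy estimate \eqref{el5}; and the interpolation inequality between $H^1$ and $H^2$. Together with the continuity of the normal trace $H^{7/4}(\Omega)\to L^2(\Gamma)$ (whose norm is again controlled by $\beta$), these yield \eqref{el8}.

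Next, I would simply insert the upper Weyl bound $\lambda_k^g\le \vartheta k^{2/n}$ from Lemma \ref{lem1}, where $\vartheta=\varkappa \alpha^{(n+2)/2}$ depends only on $n,\Omega,\alpha$:
\[
\|\psi_k^g\|_{0,2}\le \mathbf{c}(\lambda_k^g)^{7/8}\le \mathbf{c}\,\vartheta^{7/8}\, k^{7/(4n)}.
\]
Since $\mathbf{c}\,\vartheta^{7/8}$ depends only on the admissible parameters $n,\Omega,\alpha,\beta$, it may be renamed $\mathbf{c}$, giving \eqref{el10}.

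There is no genuine obstacle; the only thing to watch is that every constant entering the chain is uniform in $g\in \mathbf{G}$, which is precisely what the definition of $\mathbf{G}$ and Lemma \ref{lem1} guarantee.
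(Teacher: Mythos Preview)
Your proposal is correct and matches the paper's own argument exactly: the paper simply states that \eqref{el10} follows from \eqref{el8} ``in light of \eqref{ee}'', i.e.\ by inserting the upper Weyl bound $\lambda_k^g\le \vartheta k^{2/n}$ into $\|\psi_k^g\|_{0,2}\le \mathbf{c}(\lambda_k^g)^{7/8}$ and absorbing $\vartheta^{7/8}$ into $\mathbf{c}$.
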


\begin{remark}
{\rm
Note that, by simply adjusting some exponents, the previous proof remains valid if we replace $H^{\frac{7}{4}}(\Omega)$ with $H^{\frac{3}{2}+\epsilon}(\Omega)$, for any  $0 < \epsilon < \frac{1}{2}$. This follows from that fact that the mapping $w \in H^{\frac{3}{2}+\epsilon}(\Omega) \mapsto \partial_{\nu_g}w \in L^2(\Gamma)$ remains continuous. Consequently, all the subsequent results can be adapted by replacing $H^{\frac{7}{4}}(\Omega)$ with $H^{\frac{3}{2}+\epsilon}(\Omega)$, where $0<\epsilon <\frac{1}{2}$ is arbitrarily chosen.
}
\end{remark}

\subsection{Analyticity properties}\label{sb2.4}

We combine the usual definition of fractional powers of $-A^g$, $g\in \mathbf{G}$,  with \cite[Theorem 1]{Fu} and its proof to verify that $H^{\frac{7}{4}}(\Omega)\cap H_0^1(\Omega)$ can be identified, algebraically and topologically, with 
\[
\left\{ w\in L^2(\Omega);\sum_{k\ge 1} (\lambda_k ^g)^{\frac{7}{4}}|(w|\phi_k^g)_g|^2<\infty\right\}.
\]
Note that, according to \eqref{ee},  $H^{\frac{7}{4}}(\Omega)\cap H_0^1(\Omega)$ can also be identified, algebraically and topologically, with
\[
\left\{ w\in L^2(\Omega);\sum_{k\ge 1} k^{\frac{7}{2n}}|(w|\phi_k^g)_g|^2<\infty\right\},
\]
and  the following two norms are equivalent on $H^{\frac{7}{4}}(\Omega)\cap H_0^1(\Omega)$ 
\[
w\mapsto \left(\sum_{k\ge 1} (\lambda_k ^g)^{\frac{7}{4}}|(w|\phi_k^g)_g|^2\right)^{\frac{1}{2}},
\quad w\mapsto \left(\sum_{k\ge 1} k^{\frac{7}{2n}}|(w|\phi_k^g)_g|^2\right)^{\frac{1}{2}}.
\]

These preliminary observations will be used hereinafter.

For $\lambda \in \rho(-A^g)$, $\Lambda^g(\lambda)\in \mathscr{B}(H^{\frac{3}{2}}(\Gamma),L^2(\Gamma))$ will denote the DtN map given by
\[
\Lambda^g(\lambda):\varphi\in H^{\frac{3}{2}}(\Gamma)\mapsto \partial_{\nu_g}u^g(\lambda)\in L^2(\Gamma).
\]

The following notations will be used later.
\[
R^g(\lambda ):=(A^g+\lambda)^{-1},\quad \lambda \in \rho(-A^g),
\]
and $X^{(j)}:=\frac{d^jX}{d\lambda^j}$, $j\ge 0$.  

\begin{proposition}\label{prohol}
For all $\varphi \in H^{\frac{3}{2}}(\Gamma)$, the mappings 
\begin{align*}
&\lambda\in \rho(-A^g)\mapsto u^g(\lambda)(\varphi)\in H^2(\Omega),
\\
&\lambda\in \rho(-A^g)\mapsto \Lambda^g(\lambda)(\varphi)\in L^2(\Gamma)
\end{align*}
are analytic. Furthermore, for all $\lambda \in \rho(-A^g)$, we have
\begin{align}
&(u^g)^{(j)}(\lambda)(\varphi)=(-1)^{j+1}j!\sum_{k\ge 1}\frac{\langle \varphi|\psi^g_k\rangle_g}{(\lambda_k^g+\lambda)^{j+1}}\phi_k^g,\quad j\ge 0,\label{ser1}
\\
&(\Lambda^g)^{(j)}(\lambda)(\varphi)=(-1)^{j+1}j!\sum_{k\ge 1}\frac{\langle \varphi|\psi^g_k\rangle_g}{(\lambda_k^g+\lambda)^{j+1}}\psi_k^g,\quad j>\frac{n+3}{4},\label{ser3}
\end{align}
where the series in the right hand side of \eqref{ser3} converges in $L^2(\Gamma)$ uniformly with respect to $\lambda$ in a bounded set of $\rho(-A^g)$.
\end{proposition}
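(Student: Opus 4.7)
The plan is to prove analyticity of $\lambda\mapsto u^g(\lambda)(\varphi)$ at the level of $H^2(\Omega)$ by reducing to holomorphy of the resolvent $R^g(\lambda)$, derive \eqref{ser1} via an explicit formula for $(u^g)^{(j)}$ in terms of powers of $R^g(\lambda)$, and finally obtain \eqref{ser3} by composing with $\partial_{\nu_g}$; the delicate step is to upgrade convergence of the differentiated series to $H^{7/4}(\Omega)$.

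First, using the lift $E\varphi\in H^2(\Omega)$, I would write
\[
u^g(\lambda)(\varphi)=E\varphi-R^g(\lambda)(-\Delta_g+\lambda)E\varphi,
\]
which follows from the fact that $u^g(\lambda)(\varphi)-E\varphi\in H^2(\Omega)\cap H_0^1(\Omega)$ solves $(A^g+\lambda)(u^g(\lambda)(\varphi)-E\varphi)=-(-\Delta_g+\lambda)E\varphi$. Since $\lambda\mapsto R^g(\lambda)$ is holomorphic from $\rho(-A^g)$ into $\mathscr{B}(L^2(\Omega),H^2(\Omega)\cap H_0^1(\Omega))$, the first analyticity claim is immediate. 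A short calculation then gives $(u^g)'(\lambda)(\varphi)=-R^g(\lambda)u^g(\lambda)(\varphi)$, and an induction based on $(R^g)'(\lambda)=-R^g(\lambda)^2$ yields $(u^g)^{(j)}(\lambda)(\varphi)=(-1)^j j!\,R^g(\lambda)^j u^g(\lambda)(\varphi)$ for $j\ge 1$. Substituting the spectral decomposition $R^g(\lambda)^j w=\sum_k(w|\phi_k^g)_g(\lambda_k^g+\lambda)^{-j}\phi_k^g$ at $w=u^g(\lambda)(\varphi)$, combined with the identity $(u^g(\lambda)(\varphi)|\phi_k^g)_g=-\langle\varphi|\psi_k^g\rangle_g/(\lambda_k^g+\lambda)$ from Lemma \ref{lem2}, produces \eqref{ser1} with $L^2(\Omega)$-convergence.

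Analyticity of $\lambda\mapsto\Lambda^g(\lambda)(\varphi)$ in $L^2(\Gamma)$ then follows from continuity of the normal trace $\partial_{\nu_g}\colon H^2(\Omega)\to L^2(\Gamma)$, and identifies $(\Lambda^g)^{(j)}(\lambda)(\varphi)=\partial_{\nu_g}(u^g)^{(j)}(\lambda)(\varphi)$. To validate termwise application of $\partial_{\nu_g}$ to \eqref{ser1} and thereby derive \eqref{ser3}, I would show that the series in \eqref{ser1} converges in $H^{7/4}(\Omega)$ whenever $j>(n+3)/4$. Using the equivalent norm $w\mapsto\bigl(\sum_k k^{7/(2n)}|(w|\phi_k^g)_g|^2\bigr)^{1/2}$ on $H^{7/4}(\Omega)\cap H_0^1(\Omega)$ recorded in Subsection \ref{sb2.4}, the squared $H^{7/4}$-norm of the tail of \eqref{ser1} is controlled by
\[
\sum_k k^{7/(2n)}\,\frac{|\langle\varphi|\psi_k^g\rangle_g|^2}{|\lambda_k^g+\lambda|^{2(j+1)}}.
\]
Invoking $|\langle\varphi|\psi_k^g\rangle_g|\le\|\varphi\|_{L^2(\Gamma)}\|\psi_k^g\|_{L^2(\Gamma)}\le\mathbf{c}k^{7/(4n)}\|\varphi\|_{L^2(\Gamma)}$ from Lemma \ref{trb1} together with $\lambda_k^g\ge\vartheta^{-1}k^{2/n}$ from Lemma \ref{lem1}, this reduces to a tail of $\sum_k k^{(3-4j)/n}$, which converges precisely when $j>(n+3)/4$. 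Since $|\lambda_k^g+\lambda|$ is bounded below by a positive constant and comparable to $\lambda_k^g$ for large $k$ uniformly in $\lambda$ in any fixed bounded subset of $\rho(-A^g)$, all estimates are uniform, yielding the claimed uniform $L^2(\Gamma)$-convergence.

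The main obstacle is matching the sharp threshold $j>(n+3)/4$. A direct absolute-value bound of $\sum_k|\langle\varphi|\psi_k^g\rangle_g|\,\|\psi_k^g\|_{L^2(\Gamma)}/|\lambda_k^g+\lambda|^{j+1}$ performed in $L^2(\Gamma)$ only yields the strictly weaker threshold $j>(2n+3)/4$; the improvement comes from using the $\ell^2$-type spectral characterisation of $H^{7/4}(\Omega)\cap H_0^1(\Omega)$ prepared in Subsection \ref{sb2.4}, in which the factor $\|\psi_k^g\|_{L^2(\Gamma)}^2$ appears only once rather than squared, and then transferring convergence from $H^{7/4}(\Omega)$ to $L^2(\Gamma)$ through continuity of the normal trace.
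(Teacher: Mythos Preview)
Your proposal is correct and follows essentially the same route as the paper: both reduce analyticity to that of the resolvent via the lift $E\varphi$, derive the identity $(u^g)^{(j)}(\lambda)(\varphi)=(-1)^j j!\,R^g(\lambda)^j u^g(\lambda)(\varphi)$ (the paper via the BVP satisfied by the derivative, you via $(R^g)'=-R^g{}^2$), combine this with Lemma~\ref{lem2} to obtain \eqref{ser1}, and then upgrade to $H^{7/4}$-convergence for $j>(n+3)/4$ using the spectral characterisation of $H^{7/4}\cap H_0^1$, Lemma~\ref{trb1} and Lemma~\ref{lem1}, before applying the continuous normal trace. Your closing remark explaining why the $H^{7/4}$ detour is needed to reach the sharp threshold is a helpful addition not made explicit in the paper.
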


\begin{proof}
Let $\varphi \in H^{\frac{3}{2}}(\Gamma)$ and $f\in L^2(\Omega)$. Since $\lambda \in \rho(-A^g)\mapsto (-\Delta_g+\lambda)E\varphi \in L^2(\Omega)$ is an affine function and $\lambda\in \rho(-A^g)\mapsto R^g(\lambda)f\in H^2(\Omega)$ is analytic (the proof is similar to that of \cite[Proposition 2.30 (ii)]{Ch09}), we derive that 
\[
\lambda\in \rho(-A^g) \mapsto u^g(\lambda)(\varphi)=-R^g(\lambda)((-\Delta_g+\lambda)E\varphi)+E\varphi \in H^2(\Omega)
\]
is analytic. We verify that $(u^g)^{(j)}(\lambda)(\varphi)$, $j\ge 1$, is the solution of the BVP
\begin{equation}\label{spe}
(-\Delta_g +\lambda )v=-j(u^g)^{(j-1)}(\lambda)(\varphi)\; \mathrm{in}\; \Omega,\quad v_{|\Gamma}=0.
\end{equation}
That is we have for $1\le k\le j-1$
\begin{align*}
(u^g)^{(j)}(\lambda)(\varphi)&=-jR^g(\lambda)((u^g)^{(j-1)}(\lambda)(\varphi))
\\
& =(-1)^2j(j-1)(R^g)^2(\lambda)((u^g)^{(j-2)}(\lambda)(\varphi))
\\
&\ldots
\\
& =(-1)^kj(j-1)\ldots (j-k)(R^g)^k(\lambda)((u^g)^{(j-k+1)}(\lambda)(\varphi)).
\end{align*}
Therefore, we obtain the formula
\begin{equation}\label{F1}
(u^g)^{(j)}(\lambda)(\varphi)=(-1)^jj!(R^g)^j(\lambda)(u^g(\lambda)(\varphi)),\quad j\ge 1.
\end{equation}
We have
\[
(R^g)^j(\lambda)(f)=\sum_{k\ge 1}\frac{(f|\phi_k^g)_g}{(\lambda_k^g+\lambda)^j}\phi_k^g
\]
and it follows \eqref{f1} that
\[
u^g(\lambda)(\varphi)=-\sum_{k\ge 1}\frac{\langle \varphi|\psi^g_k\rangle_g}{\lambda_k^g+\lambda}\phi_k^g.
\]
Whence
\[
(u^g)^{(j)}(\lambda)(\varphi)=(-1)^{j+1}j!\sum_{k\ge 1}\frac{\langle \varphi|\psi^g_k\rangle_g}{(\lambda_k^g+\lambda)^{j+1}}\phi_k^g,\quad j\ge 0.
\]
That is we proved \eqref{ser1}.

On the other hand, the following inequality
\[
\left|\frac{\langle \varphi|\psi^g_k\rangle_g}{(\lambda_k^g+\lambda)^{j+1}}\right|\le \frac{1}{|\lambda_k^g+\lambda|^{j+1}}\|\psi_k^g\|_{0,0,g}\|\varphi\|_{0,0,g}.
\]
and \eqref{el10} yield
\[
k^{\frac{7}{2n}}\left|\frac{\langle \varphi|\psi^g_k\rangle_g}{(\lambda_k^g+\lambda)^{j+1}}\right|^2 =O\left(k^{-\frac{4j-3}{n}}\right),
\]
uniformly with respect to $\lambda$ in a bounded set of $\rho(-A^g)$. In consequence, the series in the right hand side of \eqref{ser1} converges in $H^{\frac{7}{4}}(\Omega)$ uniformly with respect to $\lambda$ in a bounded set of $\rho(-A^g)$ provided that $j>\frac{n+3}{4}$. Using that the mapping $w\in H^{\frac{7}{4}}(\Omega)\mapsto \partial_{\nu_g}w\in L^2(\Gamma)$ is continuous, we deduce that for all $j>\frac{n+3}{4}$
\begin{equation}\label{ser2}
(\partial_{\nu_g}u^g)^{(j)}(\lambda)(\varphi)=\partial_{\nu_g}(u^g)^{(j)}(\lambda)(\varphi)=(-1)^{j+1}j!\sum_{k\ge 1}\frac{\langle \varphi|\psi^g_k\rangle_g}{(\lambda_k^g+\lambda)^{j+1}}\psi_k^g,
\end{equation}
and the series in the right hand side of \eqref{ser2} converges in $L^2(\Gamma)$ uniformly with respect to $\lambda$ in a bounded set of $\rho(-A^g)$. In other words, we proved that 
\begin{equation}\label{ser2.0}
(\Lambda^g)^{(j)}(\lambda)(\varphi)=(-1)^{j+1}j!\sum_{k\ge 1}\frac{\langle \varphi|\psi^g_k\rangle_g}{(\lambda_k^g+\lambda)^{j+1}}\psi_k^g, \quad j>\frac{n+3}{4},
\end{equation}
where the series in the right-hand side of \eqref{ser2.0} converges in $L^2(\Gamma)$ uniformly with respect to $\lambda$ in a bounded set of $\rho(-A^g)$. The proof is complete
\end{proof}

\section{BSD determines a family of elliptic DtN maps}\label{s3}

 In what follows, if $j\ge 0$ is an integer, then $\mathbf{c}_j>0$ will denote a generic constant depending only on $n$, $\Omega$, $\alpha$, $\beta$ and $j$.
 
In this section, we aim to prove the following result.

\begin{theorem}\label{mt1}
Let $g_1,g_2\in \mathbf{G}_0$ be such that $\delta_+(g_1,g_2)<\infty$ and $j\ge 0$ be an integer. Then we have for all $\varphi \in H^{\frac{3}{2}}(\Gamma)$
\begin{equation}\label{mte1}
\|(\Lambda^1)^{(j)}(0)(\varphi)-(\Lambda^2)^{(j)}(0)(\varphi)\|\le \mathbf{c}_j \delta (g_1,g_2)\|\varphi\|_{0,2}.
\end{equation}
\end{theorem}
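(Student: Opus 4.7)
The plan is to use the $L^2(\Gamma)$-convergent series representation \eqref{ser3} from Proposition \ref{prohol}, valid for $j>(n+3)/4$, and to handle smaller $j$ by Taylor's formula in the spectral parameter $\mu$. Since $g_1{_{|\Gamma}}=g_2{_{|\Gamma}}=g_0{_{|\Gamma}}$, the boundary scalar products all coincide with $\langle\cdot|\cdot\rangle_{g_0}$, eliminating the metric dependence there. For $j>(n+3)/4$ and any $\mu\ge 0$, I would write the term-wise difference by telescoping,
\begin{align*}
\frac{\langle\varphi|\psi_k^1\rangle_{g_0}}{(\lambda_k^1+\mu)^{j+1}}\psi_k^1-\frac{\langle\varphi|\psi_k^2\rangle_{g_0}}{(\lambda_k^2+\mu)^{j+1}}\psi_k^2
&=\frac{\langle\varphi|\psi_k^1\rangle_{g_0}}{(\lambda_k^1+\mu)^{j+1}}(\psi_k^1-\psi_k^2)+\frac{\langle\varphi|\psi_k^1-\psi_k^2\rangle_{g_0}}{(\lambda_k^1+\mu)^{j+1}}\psi_k^2\\
&\quad+\langle\varphi|\psi_k^2\rangle_{g_0}\Bigl[\frac{1}{(\lambda_k^1+\mu)^{j+1}}-\frac{1}{(\lambda_k^2+\mu)^{j+1}}\Bigr]\psi_k^2,
\end{align*}
bounding the last bracket by the mean-value estimate $(j+1)|\lambda_k^1-\lambda_k^2|/\min(\lambda_k^1+\mu,\lambda_k^2+\mu)^{j+2}$.

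Cauchy--Schwarz on the pairings and H\"older's inequality with the conjugate exponents $p'$, $q'$ of $p$, $q$ then reduce the $L^2(\Gamma)$-norm of the difference series to $\|\varphi\|_{0,2}$ times $\|(\lambda_k^1-\lambda_k^2)\|_{\ell^p}$ and $\|(\psi_k^1-\psi_k^2)\|_{\ell^q(L^2(\Gamma))}$, multiplied by $\ell^{p'}$- and $\ell^{q'}$-norms of weight sequences which, by Lemma \ref{lem1} and Lemma \ref{trb1}, behave as $k^{\frac{7}{2n}-\frac{2(j+2)}{n}}$ and $k^{\frac{7}{4n}-\frac{2(j+1)}{n}}$, respectively. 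Their summability is equivalent to $(2j+\tfrac12)p'>n$ and $(2j+\tfrac14)q'>n$; since $p<\tfrac{2n}{2n-1}$ implies $p'>2n$ and $q<\tfrac{4n}{4n-1}$ implies $q'>4n$, both inequalities hold for every $j\ge 0$. Keeping track of the $\mu$-dependence, the same weights produce the stronger bound $\|D_j(\mu)\|_{0,2}\le \mathbf{c}_j\langle\mu\rangle^{-\alpha_j}\delta(g_1,g_2)\|\varphi\|_{0,2}$ with $\alpha_j:=\min(j+\tfrac18-\tfrac{n}{2q'},\,j+\tfrac14-\tfrac{n}{2p'})>j$, where $D_j:=(\Lambda^1)^{(j)}-(\Lambda^2)^{(j)}$. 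This proves \eqref{mte1} for every $j>(n+3)/4$.

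For $j\le(n+3)/4$, I fix an integer $j_0>(n+3)/4$ and apply Taylor's formula to $\mu\mapsto(\Lambda^g)^{(j)}(\mu)(\varphi)$ at $\mu_\ast>0$:
\[
(\Lambda^g)^{(j)}(0)(\varphi)=\sum_{k=0}^{j_0-j-1}\frac{(-\mu_\ast)^k}{k!}(\Lambda^g)^{(j+k)}(\mu_\ast)(\varphi)-\frac{1}{(j_0-j-1)!}\int_0^{\mu_\ast}t^{j_0-j-1}(\Lambda^g)^{(j_0)}(t)(\varphi)\,dt.
\]
Subtracting the two versions and letting $\mu_\ast\to\infty$, each polynomial term $\mu_\ast^kD_{j+k}(\mu_\ast)(\varphi)$ vanishes since $\mu_\ast^k\langle\mu_\ast\rangle^{-\alpha_{j+k}}\to 0$ by the strict gap $\alpha_{j+k}>j+k$ established above; the remainder integral is finite because $\int_0^\infty t^{j_0-j-1}\langle t\rangle^{-\alpha_{j_0}}\,dt<\infty$ (integrable at $0$ since $j_0>j$, at infinity since $\alpha_{j_0}>j_0$). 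This gives \eqref{mte1} for every $j\ge 0$.

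The delicate step is the $\mu$-asymptotic tracking in Paragraph~2: the decay rate $\alpha_j$ exceeds $j$ strictly thanks to $\tfrac{n}{2q'}<\tfrac18$ and $\tfrac{n}{2p'}<\tfrac14$, which are direct consequences of the sharp ranges $p<\tfrac{2n}{2n-1}$ and $q<\tfrac{4n}{4n-1}$. Without this strict inequality the polynomial terms in Taylor's formula could not be killed as $\mu_\ast\to\infty$, and the reduction of the small-$j$ case to $j>(n+3)/4$ would fail.
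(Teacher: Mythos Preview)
Your argument has a genuine gap in the Taylor step. The quantitative decay bound $\|D_m(\mu)\|_{0,2}\le \mathbf{c}_m\langle\mu\rangle^{-\alpha_m}\delta(g_1,g_2)\|\varphi\|_{0,2}$ that you establish in your second paragraph relies entirely on the $L^2(\Gamma)$-convergent series \eqref{ser3}, and that representation is only available when $m>\frac{n+3}{4}$, i.e.\ $m\ge j_0$. In your Taylor expansion the polynomial terms involve $D_{j+k}(\mu_\ast)$ for $k=0,\ldots,j_0-j-1$, hence $j+k\le j_0-1\le\frac{n+3}{4}$. For these orders you have \emph{no} series representation of $(\Lambda^g)^{(j+k)}$ and therefore no decay bound; the claim ``$\mu_\ast^k\langle\mu_\ast\rangle^{-\alpha_{j+k}}\to 0$ by the strict gap established above'' is unsupported precisely where you need it. Your integral remainder, which does involve order $j_0$, is fine; but the polynomial part is not controlled.

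This is exactly the difficulty the paper works around, and it is the reason the hypothesis $\delta_+(g_1,g_2)<\infty$ (not merely $\delta(g_1,g_2)<\infty$) appears. The paper proves the vanishing of the polynomial terms separately in Lemma~\ref{lemo1}, working with the eigenfunction expansion \eqref{ser1} of the \emph{solution} $u^{(j)}(\lambda)(\varphi)$, which converges in $H^{7/4}(\Omega)$ for all $j\ge 0$ once one telescopes the difference and exploits the smallness of $\lambda_k^1-\lambda_k^2$, $\psi_k^1-\psi_k^2$, and $\phi_k^1-\phi_k^2$. The last of these forces the appearance of the quantity $\delta_0(g_1,g_2)$ through the term $u_{3,j}$ in \eqref{o6}. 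Your attempt to bypass the eigenfunctions $\phi_k$ and work only with the boundary traces $\psi_k$ would, if it succeeded, yield a stronger theorem (no $\delta_0$ assumption), but as written it does not close: for low-order derivatives of $\Lambda$ there is no purely boundary series to telescope, and you must either supply an independent argument for $\mu_\ast^kD_{j+k}(\mu_\ast)\to 0$ when $j+k\le\frac{n+3}{4}$, or revert to the interior expansion as the paper does.
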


Before proving this theorem, we prove the following technical lemma.

\begin{lemma}\label{lemte}
Let $j\ge 2$ and $m\ge 0$ be two integers, $0<a<b$ and $\lambda >0$. Then we have
\begin{align}
&\int_0^\lambda \frac{s^{j-1}}{(a+s)^{j+m+1}}ds\le \frac{j2^{j-1}}{a^{m+1}}, \label{te1}
\\
&\int_0^\lambda \left[\frac{s^{j-1}}{(a+s)^{j+1}}-\frac{s^{j-1}}{(b+s)^{j+m+1}}\right]ds\le \frac{j(j+m+1)2^{j-1}(b-a)}{a^{m+1}b}.\label{te2}
\end{align}
\end{lemma}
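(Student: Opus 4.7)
The plan is to address the two estimates in turn.

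For \eqref{te1}, the strategy is to split the integration range at $s=a$. On $[0,a]$ the crude bound $(a+s)^{j+m+1}\ge a^{j+m+1}$ gives a contribution of at most $\int_{0}^{a}s^{j-1}\,ds/a^{j+m+1}=1/(ja^{m+1})$. On $[a,\lambda]$, using $a+s\ge s$ yields $s^{j-1}/(a+s)^{j+m+1}\le s^{-(m+2)}$, which integrates to at most $1/((m+1)a^{m+1})$. Summing these two contributions gives a bound of order $a^{-(m+1)}$, and the stated $j\,2^{j-1}/a^{m+1}$ follows comfortably since $j\ge 2$ and $m\ge 0$.

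For \eqref{te2}, the plan is to place the two terms over a common denominator and exploit a telescoping factorization of the form
\[
(b+s)^{p}-(a+s)^{p}=(b-a)\sum_{k=0}^{p-1}(a+s)^{k}(b+s)^{p-1-k}\le p\,(b-a)(b+s)^{p-1},
\]
with $p=j+m+1$. Combined with $b+s\ge b$, this yields a pointwise bound of the form $(j+m+1)(b-a)/\bigl((a+s)^{j+m+1}(b+s)\bigr)$ for the bracketed difference. Multiplying by $s^{j-1}$, integrating over $[0,\lambda]$, and invoking \eqref{te1} then reproduces exactly the target $j(j+m+1)2^{j-1}(b-a)/(a^{m+1}b)$.

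The main obstacle I anticipate is the exponent asymmetry in the printed form of \eqref{te2}: the first summand carries $(a+s)^{j+1}$ while the second carries $(b+s)^{j+m+1}$. The factor $(j+m+1)$ in the right-hand side is precisely the length of the telescoping sum in the identity above, so the argument closes most cleanly when both inner exponents equal $j+m+1$. Working with the asymmetric form as printed would require a preliminary equalization step (rewriting $(a+s)^{-(j+1)}$ as $(a+s)^{-(j+m+1)}$ times $(a+s)^{m}$, or similar), and verifying that this step preserves the $(b-a)$ prefactor rather than producing an irreducible $O(1)$ remainder is the delicate point I expect to scrutinize before finalizing the chain of inequalities.
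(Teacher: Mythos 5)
Your argument is correct, and your instinct about the exponent asymmetry in \eqref{te2} is exactly right: as printed, with $(a+s)^{j+1}$ against $(b+s)^{j+m+1}$, the inequality is actually false for $m\ge 1$ (let $b\to a^{+}$: the left-hand side tends to the positive quantity $\int_0^\lambda s^{j-1}[(a+s)^{-(j+1)}-(a+s)^{-(j+m+1)}]\,ds$ while the right-hand side tends to $0$). This is a typo; the intended statement has $(a+s)^{j+m+1}$ in the first term, which is what the paper's own proof establishes and what is used later to bound $|d_k^1-d_k^2|$ in the proof of Theorem \ref{mt1}, where both $d_k^1$ and $d_k^2$ carry the exponent $j_0+m+1$. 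With that correction your telescoping factorization $(b+s)^p-(a+s)^p=(b-a)\sum_{k=0}^{p-1}(a+s)^k(b+s)^{p-1-k}$, the bound $(b+s)^{-1}\le b^{-1}$, and an appeal to \eqref{te1} reproduce the paper's proof of \eqref{te2} essentially verbatim. For \eqref{te1} your route differs from the paper's: you split the integral at $s=a$ and use $(a+s)\ge a$ on $[0,a]$ and $(a+s)\ge s$ beyond, obtaining $\bigl(\tfrac1j+\tfrac1{m+1}\bigr)a^{-(m+1)}\le \tfrac32 a^{-(m+1)}$, which is sharper than $j2^{j-1}a^{-(m+1)}$ and avoids the paper's change of variable $\tau=1+s/a$ followed by a binomial expansion of $(\tau-1)^{j-1}/\tau^{j+1}$; both are elementary, but yours is shorter and gives a constant independent of $j$.
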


\begin{proof}
Since 
\[
\int_0^\lambda \frac{s^{j-1}}{(a+s)^{j+m+1}}ds\le \frac{1}{a^m} \int_0^\lambda \frac{s^{j-1}}{(a+s)^{j+1}}ds,
\]
it is enough to prove \eqref{te1} when $m=0$. Making the change of variable $\tau=1+s/a$, we obtain
\begin{equation}\label{te3}
\int_0^\lambda \frac{s^{j-1}}{(a+s)^{j+1}}ds=\frac{1}{a}\int_1^{1+\lambda/a}\frac{(\tau-1)^{j-1}}{\tau^{j+1}}d\tau.
\end{equation}
We compute the integral in the right hand side of \eqref{te3}. We have, where $C_k^\ell=\frac{k!}{\ell!(k-\ell)!}$,
\begin{align*}
\int_1^{1+\lambda/a}\frac{(\tau-1)^{j-1}}{\tau^{j+1}}d\tau&=\sum_{\ell=0}^{j-1}C_{j-1}^\ell (-1)^{j-1-\ell}\int_1^{1+\lambda/a}\frac{1}{\tau^{j+1-\ell}}d\tau
\\
&=\sum_{\ell=0}^{j-1}C_{j-1}^\ell (-1)^{j-1-\ell}(j-\ell)\left( 1-\frac{1}{(1+\lambda/a)^{j-\ell}} \right)
\\
&\le j2^{j-1}.
\end{align*}
This in \eqref{te3} gives \eqref{te1}.

To prove \eqref{te2}, we use
\begin{align*}
&\frac{1}{(a+\lambda)^{j+m+1}}-\frac{1}{(b+\lambda)^{j+m+1}} 
\\
&\hskip 1.5cm= \frac{b-a}{(a+\lambda)(b+\lambda)} \left( \frac{1}{(a+\lambda)^{j+m}}+\frac{1}{(a+\lambda)^{j+m-1}}\frac{1}{(b+\lambda)}\right.
\\
&\hskip 6cm \left.  +\ldots +\frac{1}{a+\lambda}\frac{1}{(b+\lambda)^{j+m-1}}+\frac{1}{(b+\lambda)^{j+m}}\right)
\\
&\hskip 1.5cm \le \frac{b-a}{b}\frac{j+m+1}{(a+\lambda)^{j+m+1}}
\end{align*}
to obtain
\[
\int_0^\lambda \left[\frac{s^{j-1}}{(a+s)^{j+1}}-\frac{s^{j-1}}{(b+s)^{j+1}}\right]ds\le \frac{(j+m+1)(b-a)}{b}\int_0^\lambda \frac{s^{j-1}}{(a+s)^{j+m+1}}ds.
\]
This and \eqref{te1} imply \eqref{te2}.
\end{proof}

The following lemma will also be used in the proof of Theorem \ref{mt1}.

\begin{lemma}\label{lemo1}
Let $g_1,g_2\in \mathbf{G}_0$ such that $\delta_+(g_1,g_2)<\infty$ and $\varphi\in H^{\frac{3}{2}}(\Gamma)$. Then
\begin{equation}\label{o0}
\lim_{\lambda \rightarrow \infty}\left\|\lambda^{j}\left[(\Lambda^1)^{(j)}(\lambda)(\varphi)-(\Lambda^2)^{(j)}(\lambda)(\varphi)\right]\right\|_{0,2}=0,\quad j\ge 0.
\end{equation}
\end{lemma}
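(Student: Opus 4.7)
The plan is to use the series representation \eqref{ser3} of Proposition \ref{prohol}, which is valid for $j > (n+3)/4$, and to exploit the boundary matching $g_1|_\Gamma = g_2|_\Gamma = g_0|_\Gamma$ (which identifies $\langle \cdot | \cdot \rangle_{g_1} = \langle \cdot | \cdot \rangle_{g_2}$ on $\Gamma$). Setting $\Delta^{(j)}(\lambda) := [(\Lambda^1)^{(j)}(\lambda) - (\Lambda^2)^{(j)}(\lambda)](\varphi)$, for $j > (n+3)/4$ I would write $\lambda^j \Delta^{(j)}(\lambda) = (-1)^{j+1} j! \sum_{k \ge 1} T_k(\lambda)$ in $L^2(\Gamma)$ and decompose each term via the telescoping identity
\[
\frac{a_1 x_1}{b_1^{j+1}} - \frac{a_2 x_2}{b_2^{j+1}} = \frac{(a_1-a_2) x_1}{b_1^{j+1}} + a_2 x_1 \bigg[\frac{1}{b_1^{j+1}} - \frac{1}{b_2^{j+1}}\bigg] + \frac{a_2(x_1-x_2)}{b_2^{j+1}},
\]
obtaining $T_k = A_k + B_k + C_k$, where $A_k, C_k$ isolate $\psi_k^1 - \psi_k^2$ and $B_k$ isolates $\lambda_k^1 - \lambda_k^2$ through the mean-value bound $|(\lambda_k^1+\lambda)^{-(j+1)} - (\lambda_k^2+\lambda)^{-(j+1)}| \le (j+1)|\lambda_k^1 - \lambda_k^2|/(\min(\lambda_k^1,\lambda_k^2)+\lambda)^{j+2}$, in the spirit of Lemma \ref{lemte}.

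Using $\lambda^j/(\lambda_k^g + \lambda)^{j+1} \le (\lambda_k^g)^{-1} \le \mathbf{c} k^{-2/n}$ and $\lambda^j/(\lambda_k^g + \lambda)^{j+2} \le (\lambda_k^g)^{-2} \le \mathbf{c} k^{-4/n}$ (from Weyl's Lemma \ref{lem1}), together with $\|\psi_k^g\|_{0,2} \le \mathbf{c} k^{7/(4n)}$ (Lemma \ref{trb1}), I obtain $\|A_k(\lambda)\|_{0,2}, \|C_k(\lambda)\|_{0,2} \le \mathbf{c} \|\varphi\|_{0,2} k^{-1/(4n)} \|\psi_k^1 - \psi_k^2\|_{0,2}$ and $\|B_k(\lambda)\|_{0,2} \le \mathbf{c}_j \|\varphi\|_{0,2} k^{-1/(2n)} |\lambda_k^1 - \lambda_k^2|$. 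Hölder's inequality with conjugate exponents $p' = p/(p-1) > 2n$ and $q' = q/(q-1) > 4n$---allowed precisely because $p < 2n/(2n-1)$ and $q < 4n/(4n-1)$---then bounds the tail $\sum_{k > N} \|T_k(\lambda)\|_{0,2}$ by $\mathbf{c}_j \|\varphi\|_{0,2} \delta(g_1, g_2)$ times a factor tending to $0$ as $N \to \infty$, \emph{uniformly in $\lambda$}. For each fixed $k$, the pointwise limit $\lambda^j/(\lambda_k^g + \lambda)^{j+1} \to 0$ gives $T_k(\lambda) \to 0$ in $L^2(\Gamma)$. A standard $\epsilon/2$-argument (pick $N$ to kill the tail, then $\lambda$ to kill the finite sum) establishes the lemma for $j > (n+3)/4$.

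For $j \le (n+3)/4$ I would proceed by downward induction. Assuming the result for $j+1$, integrating $\frac{d}{d\lambda}(\Lambda^g)^{(j)}(\lambda) = (\Lambda^g)^{(j+1)}(\lambda)$ and using $\|\Delta^{(j+1)}(s)\|_{0,2} = o(s^{-(j+1)})$ show that $\Delta^{(j)}(\lambda)$ admits a limit $\Delta^{(j)}(\infty) \in L^2(\Gamma)$ with $\|\Delta^{(j)}(\lambda) - \Delta^{(j)}(\infty)\|_{0,2} = o(\lambda^{-j})$. The main obstacle I expect is proving $\Delta^{(j)}(\infty) = 0$: the Hölder tail bound above only requires $\delta(g_1,g_2) < \infty$, so the formal difference series $\sum_k T_k(\lambda)$ converges absolutely in $L^2(\Gamma)$ for every $j \ge 0$, and a Cauchy integral formula applied on circles of radius $\lambda/2$---together with the fast decay of $\Delta^{(J)}$ for some fixed $J > (n+3)/4$---should identify this formal series with $\Delta^{(j)}(\lambda)$ up to a polynomial term that must vanish by its growth properties at infinity, closing the induction.
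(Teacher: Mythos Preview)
Your treatment of the range $j>(n+3)/4$ is sound and is essentially the boundary-level counterpart of the paper's argument: you telescope the series \eqref{ser3} directly in $L^2(\Gamma)$, whereas the paper telescopes the interior series \eqref{ser1} for $u^{(j)}$ in $H^{7/4}(\Omega)$ and only then takes the normal trace. The distinction matters, because the interior route works uniformly for \emph{every} $j\ge 0$, and this is exactly where your downward induction runs into trouble.

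The obstacle you flag---identifying the absolutely convergent formal series $\sum_k T_k(\lambda)$ with $\Delta^{(j)}(\lambda)$ for small $j$---is genuine and is not resolved by your Cauchy-integral sketch. One can indeed show that $\Delta^{(j)}-S_j$ is a polynomial (all sufficiently high derivatives vanish), and for $j\ge 1$ an interpolation bound on $(u^g)^{(j)}\in H_0^1\cap H^2$ gives $\|(\Lambda^g)^{(j)}(\lambda)(\varphi)\|_{0,2}=O(\lambda^{7/8-j})$, which forces that polynomial to be zero. But at $j=0$ the difference $v(\lambda):=u^1(\lambda)(\varphi)-u^2(\lambda)(\varphi)$ satisfies $(-\Delta_{g_1}+\lambda)v=(\Delta_{g_1}-\Delta_{g_2})u^2(\lambda)(\varphi)$, whose right-hand side involves \emph{second} derivatives of $u^2$ and is only $O(\lambda)$ in $L^2$; the resolvent gain $\lambda^{-1}$ is exactly cancelled, so no growth argument rules out a nonzero constant in $\Delta^{(0)}-S_0$. (Contrast Lemma~\ref{lempo1} in the potential case, where the right-hand side $(V_2-V_1)u^2$ is uniformly bounded and the same scheme succeeds.)

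The paper bypasses this entirely by never appealing to \eqref{ser3}. It decomposes $u^{(j)}(\lambda)$ into three pieces $u_{1,j},u_{2,j},u_{3,j}$: the first two isolate $\lambda_k^1-\lambda_k^2$ and $\psi_k^1-\psi_k^2$ as you do, but the third isolates $\phi_k^1-\phi_k^2$. Convergence of each piece is then established in $H^{7/4}(\Omega)$ via the orthogonality of $(\phi_k^1)$ in $L^2(\Omega,dV_{g_1})$, which reduces the estimate to an $\ell^2$ (rather than $\ell^1$) condition on weighted coefficients and therefore holds for every $j\ge 0$ without any induction. Controlling $u_{3,j}$ is precisely where the hypothesis $\delta_0(g_1,g_2)<\infty$ is used; your boundary decomposition avoids $\phi_k^1-\phi_k^2$ altogether, which explains why you never touch $\delta_0$, but the price is the unresolved step at $j=0$.
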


\begin{proof}
For $\lambda >0$, let $u(\lambda):=u^1(\lambda)(\varphi)-u^2(\lambda)(\varphi)$. We split $u^{(j)}(\lambda)$ into three terms 
\[
u^{(j)}(\lambda)=\sum_{m=1}^3u_{m,j}(\lambda),
\]
where
\begin{align}
&u_{1,j}(\lambda)(\varphi):=(-1)^{j+1}j!\sum_{k\ge 1} \left(\frac{1}{(\lambda +\lambda_k^1)^{j+1}}-\frac{1}{(\lambda +\lambda_k^2)^{j+1}}\right)\langle\varphi|\psi_k^1\rangle_{g_0}\phi_k^1,\label{o4}
\\
&u_{2,j}(\lambda)(\varphi):=(-1)^{j+1}j!\sum_{k\ge 1} \frac{1}{(\lambda +\lambda_k^2)^{j+1}}\langle\varphi|\psi_k^1-\psi_k^2\rangle_{g_0}\phi_k^1,\label{o5}
\\
&u_{3,j}(\lambda)(\varphi):=(-1)^{j+1}j!\sum_{k\ge 1} \frac{1}{(\lambda +\lambda_k^2)^{j+1}}\langle\varphi|\psi_k^2\rangle_{g_0}(\phi_k^1-\phi_k^2).\label{o6}
\end{align}

Since
\begin{align}
&j!\left(\frac{1}{(\lambda +\lambda_k^1)^{j+1}}-\frac{1}{(\lambda +\lambda_k^2)^{j+1}}\right)\label{ui0}
\\
&\hskip 2cm =-(j+1)!\int_0^1\frac{\lambda_k^1-\lambda_k^2}{(\lambda +\lambda_k^2+t(\lambda_k^1-\lambda_k^2))^{j+2}}dt, \nonumber
\end{align}
we obtain
\begin{equation}\label{ui1}
j!\left|\frac{1}{(\lambda +\lambda_k^1)^{j+1}}-\frac{1}{(\lambda +\lambda_k^2)^{j+1}}\right|\le \mathbf{c}(j+1)!\lambda^{-j}k^{-\frac{4}{n}}|\lambda_k^1-\lambda_k^2|.
\end{equation}
If
\[
\rho_{j,k}(\lambda):=(-1)^{j+1}j!\left(\frac{1}{(\lambda +\lambda_k^1)^{j+1}}-\frac{1}{(\lambda +\lambda_k^2)^{j+1}}\right)\langle\varphi|\psi_k^1\rangle_{g_0},
\]
then we get
\[
k^{\frac{7}{4n}}\lambda ^j|\rho_{j,k}|\le \mathbf{c}(j+1)!k^{-\frac{1}{2n}}|\lambda_1-\lambda_2|\|\varphi\|_{0,2}.
\]
Let $2<\tilde{p}<\frac{2n}{n-1}$. Applying H\"older's inequality, we find
\[
\sum_{k\ge \ell}\left[ k^{\frac{7}{4n}}\lambda ^j|\rho_{j,k}(\lambda)|\right]^2\le \mathbf{c}[(j+1)!]^2\left(\sum_{k\ge1}k^{-\frac{\tilde{p}}{n(\tilde{p}-2)}}\right)^{\frac{p-2}{p}}\|(\lambda_k^1-\lambda_k^2)\|_{\ell^{\tilde{p}}}^2\|\varphi\|_{0,2}^2
\]
and hence
\[
\sum_{k\ge \ell}\left[ k^{\frac{7}{4n}}\lambda ^j|\rho_{j,k}(\lambda)|\right]^2\le \mathbf{c}[(j+1)!\delta(g_1,g_2)\|\varphi\|_{0,2}]^2,
\]
where we used that $p<\tilde{p}$. Therefore, the series in the right hand side of \eqref{o4} converges in $H^{\frac{7}{4}}(\Omega)$, and since $k^{\frac{7}{4n}}\lambda ^j|\rho_{j,k}(\lambda)|$ tends to $0$ when $\lambda$ goes to $\infty$, we have, according to the dominated convergence theorem,
\[
\lim_{\lambda \rightarrow \infty}\|\lambda^ju_{1,j}(\lambda)(\varphi)\|_{\frac{7}{4},2}=0.
\]
In particular, as the trace map $w\in H^{\frac{7}{4}}(\Omega)\mapsto \partial_{\nu_{g_0}}w\in L^2(\Gamma)$ is continuous, we obtain
\[
\|\lambda^j\partial_{\nu_{g_0}} u_{1,j}(\lambda)(\varphi)\|_{0,2}\le \mathbf{c}(j+1)!\delta(g_1,g_2)\|\varphi\|_{0,2}
\]
and
\begin{equation}\label{o7}
\lim_{\lambda \rightarrow \infty}\|\lambda^j\partial_{\nu_{g_0}}u_{1,j}(\lambda)(\varphi)\|_{0,2}=0.
\end{equation}

For the series in \eqref{o5}, we have 
\[
\left|j!\frac{k^{\frac{7}{4n}}\lambda^j}{(\lambda +\lambda_k^2)^{j+1}}\langle\varphi|\psi_k^1-\psi_k^2\rangle_{g_0}\right|^2
\le \mathbf{c}[j!]^2k^{-\frac{1}{2n}}\|\psi_k^1-\psi_k^2\|_{0,2}^2\|\varphi\|_{0,2}^2.
\]
Let $2<\tilde{q}<\frac{4n}{2n-1}$. Applying again H\"older's inequality, we obtain
\begin{align*}
&\sum_{k\ge 1}\left|j!\frac{k^{\frac{7}{4n}}\lambda^j}{(\lambda +\lambda_k^2)^{j+1}}\langle\varphi|\psi_k^1-\psi_k^2\rangle_{g_0}\right|^2
\\
&\hskip 2cm\le \mathbf{c}[j!]^2\left(\sum_{k\ge 1}k^{-\frac{\tilde{q}}{2n(q-2)}}\right)^{\frac{\tilde{q}-2}{\tilde{q}}}\|\psi_k^1-\psi_k^2\|_{\ell^{\tilde{q}}(L^2(\Gamma))}^2\|\varphi\|_{0,2}^2.
\end{align*}

This inequality at hand and using that $q<\tilde{q}$, we proceed as for \eqref{o7} to derive
\[
\|\lambda^j\partial_{\nu_{g_0}} u_{2,j}(\lambda)(\varphi)\|_{0,2}\le \mathbf{c}j!\delta(g_1,g_2)\|\varphi\|_{0,2}
\]
and
\begin{equation}\label{o8}
\lim_{\lambda \rightarrow \infty}\|\lambda^j\partial_{\nu_{g_0}}u_{2,j}(\lambda)(\varphi)\|_{0,2}=0.
\end{equation}

Next, we have for all $\ell\ge 1$
\[
(u_{3,j}(\lambda)(\varphi)|\phi_\ell^1)_{g_1}=(-1)^{j+1}j!\sum_{k\ge 1} \frac{1}{(\lambda +\lambda_k^2)^{j+1}}\langle\varphi|\psi_k^2\rangle_{g_0}(\delta_{k\ell}-(\phi_k^2|\phi_\ell^1)_{g_1}).
\]
Whence
\[
\ell^{\frac{7}{4n}}\lambda^j|(u_{3,j}(\lambda)(\varphi)|\phi_\ell^1)_{g_1}|\le \mathbf{c}j!\sum_{k\ge 1}\ell^{\frac{7}{4n}}k^{-\frac{1}{4n}}|(\delta_{k\ell}-(\phi_k^2|\phi_\ell^1)_{g_1})|\|\varphi\|_{0,2}.
\]
As $(\ell^{\frac{7}{4n}}k^{-\frac{1}{4n}}(\delta_{k\ell}-(\phi_k^2|\phi_\ell^1)_{g_1}))\in \ell^1(\mathbb{N}^2)$, we get $(\ell^{\frac{7}{4n}}\lambda^j(u_{3,j}(\lambda)(\varphi)|\phi_\ell^1)_{g_1})\in \ell^1\subset \ell^2$ (with $\|\cdot \|_{\ell^2}\le \|\cdot \|_{\ell^1}$). Again, we proceed as for \eqref{o7} to obtain
\begin{equation}\label{o9}
\lim_{\lambda \rightarrow \infty}\|\lambda^j\partial_{\nu_{g_0}}u_{3,j}(\lambda)(\varphi)\|_{0,2}=0.
\end{equation}
Putting together \eqref{o7}, \eqref{o8} and \eqref{o9}, we obtain \eqref{o0}.
\end{proof}

\begin{proof}[Proof of Theorem \ref{mt1}]
Let $j_0=\lfloor\frac{n+3}{4}\rfloor +1$ and $\varphi \in H^{\frac{3}{2}}(\Gamma)$  such that $\|\varphi\|_{0,2}=1$. 
Let $m\ge 0$ be an integer and $\lambda >0$ and set 
\[
\Lambda(\lambda):= \Lambda^1(\lambda)(\varphi)-\Lambda^2(\lambda)(\varphi).  
\]
Applying Taylor's formula to the mapping $t\in [0,1] \mapsto \Lambda^{(m)} ((1-t)\lambda)$, we obtain
\begin{align}
&\Lambda^{(m)}(0)=\sum_{j=0}^{j_0-1}(-\lambda)^j\Lambda^{(m+j)} (\lambda)\label{o10}
\\
&\hskip 3.5cm+\int_0^1\frac{(1-t)^{j_0-1}(-\lambda)^{j_0}}{(j_0-1)!}\Lambda^{(m+j_0)} ((1-t)\lambda)dt.\nonumber
\end{align}
For convenience, we use hereinafter the notation
\[
\Upsilon:=\int_0^1\frac{(1-t)^{j_0-1}(-\lambda)^{j_0}}{(j_0-1)!}\Lambda^{(m+j_0)} ((1-t)\lambda)dt.
\]
From the formula \eqref{ser3}, we have
\[
\Upsilon =\sum_{k\ge 1}\left(d_k^1\langle\varphi|\psi_k^1\rangle_{g_0}\psi_k^1 -d_k^2\langle\varphi|\psi_k^2\rangle_{g_0}\psi_k^2\right),
\]
where
\[
d_k^\ell=\frac{(-1)^{m+1}(j_0+m)!}{(j_0-1)!}\int_0^1\frac{(1-t)^{j_0-1}\lambda^{j_0}}{(\lambda_k^\ell +(1-t)\lambda)^{j_0+m+1}}dt,\quad \ell=1,2.
\]
The change of variable $s=(1-t)\lambda $ in the integral above yields
\[
d_k^\ell=\frac{(-1)^{m+1}(j_0+m)!}{(j_0-1)!}\int_0^\lambda \frac{s^{j_0-1}}{(\lambda_k^\ell +s)^{j_0+m+1}}ds,\quad  k\ge 1,\;\ell=1,2.
\]
Using \eqref{te1}  and \eqref{ee}, we get
\begin{equation}\label{r1}
|d_k^\ell|\le \mathbf{c}_mk^{-\frac{2}{n}},\quad  k\ge 1,\;\ell=1,2.
\end{equation}
Similarly, a combination of \eqref{te2} and \eqref{ee} gives
\[
|d_k^1-d_k^2|\le \mathbf{c}_mk^{-\frac{4}{n}}|\lambda_k^1-\lambda_k^2|,\quad  k\ge 1.
\]
This and \eqref{el10} imply
\begin{equation}\label{r3}
\|(d_k^1-d_k^2)\langle\varphi|\psi_k^1\rangle_{g_0}\psi_k^1\|_{0,2}\le \mathbf{c}_mk^{-\frac{1}{2n}}|\lambda_k^1-\lambda_k^2|,\quad  k\ge 1.
\end{equation}
As in the preceding proof, H\"older's inequality then yields
\begin{equation}\label{r2}
\sum_{k\ge 1}\|(d_k^1-d_k^2)\langle\varphi|\psi_k^1\rangle_{g_0}\psi_k^1\|_{0,2}\le \mathbf{c}_m\|(\lambda_k^1-\lambda^2)\|_{\ell^p},\quad  k\ge 1.
\end{equation}
From \eqref{r1} and \eqref{el10}, we obtain 
\begin{align*}
&\|d_k^2\langle\varphi|\psi_k^1-\psi_k^2\rangle_{g_0}\psi_k^1\| +\|d_k^2\langle\varphi|\psi_k^2\rangle_{g_0}(\psi_k^1-\psi_k^2)\|_{0,2}
\\
&\hskip 5cm\le \mathbf{c}_mk^{-\frac{1}{4n}}\|\psi_k^1-\psi_k^2\|_{0,2},\quad  k\ge 1.
\end{align*}
Again, H\"older's inequality implies
\begin{align}
&\sum_{k\ge 1}\|d_k^2\langle\varphi|\psi_k^1-\psi_k^2\rangle_{g_0}\psi_k^1\| +\|d_k^2\langle\varphi|\psi_k^2\rangle_{g_0}(\psi_k^1-\psi_k^2)\|_{0,2}\label{r4}
\\
&\hskip 5cm\le \mathbf{c}_m\|\psi_k^1-\psi_k^2\|_{\ell^q(L^2(\Gamma)},\quad  k\ge 1.\nonumber
\end{align}
Putting together \eqref{r2} and \eqref{r4}, we get
\begin{equation}\label{r3}
\|\Upsilon\|_{0,2}\le \mathbf{c}_m \delta(g_1,g_2).
\end{equation}
On the other hand, the inequality
\[
\|\lambda^j\Lambda^{(m+j)}(\varphi)\|_{0,2}\le \|\lambda^{j+m}\Lambda^{(m+j)}(\varphi)\|_{0,2},\quad \lambda \ge 1,
\]
and \eqref{o0} yield
\begin{equation}\label{r5}
\lim_{\lambda \rightarrow \infty}\|\lambda^j\Lambda^{(m+j)}(\varphi)\|_{0,2}=0.
\end{equation} 
The expected inequality follows then from \eqref{r3} and \eqref{r5}.
\end{proof}

\begin{proof}[Completion of the proof of Theorem \ref{thm2.1}]
According to \cite[Theorem 2.1]{CS}, there exist three constants $\kappa>0$, $0<\varsigma<e^{-1}$ and $0<\theta <1$,  depending only on $n$, $\Omega$, $\tilde{g}$, $\alpha$, $\tilde{\beta}$,  such that
\begin{equation}\label{2.2}
\|c_1-c_2\|_\infty \le \kappa \Psi_{\varsigma,\theta}\left(\|\Lambda^1(0)-\Lambda^2(0)\|_{\mathscr{B}(H^{\frac{1}{2}}(\Gamma),H^{-\frac{1}{2}}(\Gamma))}\right).
\end{equation}
Using \eqref{mte1} with $j=0$ and the density of $H^{\frac{3}{2}}(\Gamma)$ in $H^{\frac{1}{2}}(\Gamma)$, we verify that 
\[
\|\Lambda^1(0)-\Lambda^2(0)\|_{\mathscr{B}(H^{\frac{1}{2}}(\Gamma),H^{-\frac{1}{2}}(\Gamma))}\le \mathbf{c}\delta(g_1,g_2).
\]
Upon modifying $\varsigma$, the inequality above and \eqref{2.2} yield 
\[
\|c_1-c_2\|_\infty \le \kappa \Psi_{\varsigma,\theta}\left(\delta(g_1,g_2)\right).
\]
The proof is complete.
\end{proof}

\begin{remark}
{\rm
In light of  \cite[Theorem 1.3]{DKLS}, the uniqueness result related to Theorem \ref{thm2.1} remains valid if we replace the assumption ``$(M',g')$ is a simple manifold'' with the injectivity of the ray transform on $(M',g')$.
}
\end{remark}

\section{BSD determines a hyperbolic DtN map}\label{s4}

Let $\tau >0$, $Q=\Omega \times (0,\tau)$, $\Sigma=\Gamma\times (0,\tau)$, and recall the following notations introduced in \cite{LM}, where $X=\Omega$ and $Y=Q$ or $X=\Gamma$ and $Y=\Sigma$,
\[
H^{r,s}(Y)=L^2((0,\tau),H^r(X))\cap H^s((0,\tau),L^2(X)),\quad r\ge 0,\; s\ge 0,
\]
with the convention that $H^0(\cdot)=L^2(\cdot)$. Define for all integer $j\ge 1$ 
\[
H_{0,}^j((0,\tau),H^{\frac{3}{2}}(\Gamma))=\left\{w\in H^j((0,\tau),H^{\frac{3}{2}}(\Gamma));\; \frac{d^\ell}{dt^\ell}w(0)=0,\; 0\le \ell\le j-1\right\},
\]
and
\[
\mathcal{H}^j((0,\tau),H^{\frac{3}{2}}(\Gamma))=H_{0,}^j((0,\tau),H^{\frac{3}{2}}(\Gamma))\cap H^{j+1}((0,\tau),H^{\frac{3}{2}}(\Gamma)).
\]

Consider the IBVP
\begin{equation}\label{w1.0.0}
\left\{
\begin{array}{ll}
(\partial_t^2-\Delta_g)w=F\quad \mathrm{in}\; Q,
\\
w(0,\cdot)=0,
\\
\partial_tw(0,\cdot)=0,
\\
w=h \quad \mathrm{on}\;  \Sigma .
\end{array}
\right.
\end{equation}

Let $g\in \mathbf{G}$ and $h\in \mathcal{H}^2((0,\tau),H^{\frac{3}{2}} (\Gamma))$. By \cite[Theorem 3.1 of Chapter 5]{LM}, the IBVP \eqref{w1.0.0} admits a unique solution $v\in H^{2,2}(Q)$ when $F=-(\partial_t^2-\Delta_g)Eh\in H^{0,1}(Q)$ and $h=0$. Therefore, $w=v+Eh\in H^{2,2}(Q)$ is the unique solution of the IBVP \eqref{w1.0.0}, when $F=0$. In the following, the solution of \eqref{w1.0.0} with $F=0$ and $h\in \mathcal{H}^2((0,\tau),H^{\frac{3}{2}} (\Gamma))$ will be denoted  $w^g(h)$.

Define the hyperbolic Dirichlet to Neumann map $\Pi^g$ associated with $g$ by
\[
\Pi^g:h\in \mathcal{H}^2((0,\tau),H^{\frac{3}{2}} (\Gamma))\mapsto \partial_{\nu_g}w\in L^2(\Sigma).
\]

The restriction of $\Pi^g$ to $\mathcal{H}^{2n+4}((0,\tau),H^{\frac{3}{2}} (\Gamma))$ will be denoted again $\Pi^g$.

In this section, $\mathbf{c}_\tau>0$ will denote a generic constant depending only on $n$, $\Omega$, $\alpha$, $\beta$ and $\tau$.

\begin{theorem}\label{mt2}
If $g_1,g_2\in \mathbf{G}_0$ satisfy $\delta_+(g_1,g_2)<\infty$, then
\begin{equation}\label{mte2}
\|\Pi^1-\Pi^2\|_{\mathscr{B}(\mathcal{H}^{2n+4}((0,\tau),H^{\frac{3}{2}} (\Gamma)), L^2(\Sigma))}\le \mathbf{c}_\tau \delta(g_1,g_2).
\end{equation}
\end{theorem}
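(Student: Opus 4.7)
The plan is to reduce Theorem \ref{mt2} to Theorem \ref{mt1} by Taylor-expanding $w^g(h)$ via repeated integration by parts in time, which expresses $\Pi^g(h)$ as a finite sum of $(\Lambda^g)^{(j)}(0)$ applied to even-order time derivatives of $h$, plus a summable remainder. The threshold $\mathcal{H}^{2n+4}$ arises because $h\in\mathcal{H}^{2n+4}$ is what allows $2(n+2)$ integrations by parts (using $\partial_t^\ell h(0)=0$ for $0\le\ell\le 2n+3$), and this is precisely the gain needed to trade one power of $(\lambda_k^g)^{-1}$ per two IBPs against the $H^{7/4}$-based trace bound \eqref{el10} and Weyl's asymptotics \eqref{ee}.

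Concretely, I would expand $w^g(h)(t,\cdot)=\sum_k c_k^g(t)\phi_k^g$ and use a double Green's formula in $x$, analogous to Lemma \ref{lem2}, to show that $c_k^g(t):=(w^g(h)(t)|\phi_k^g)_g$ satisfies
\[
(c_k^g)''(t)+\lambda_k^g c_k^g(t)=-\langle h(t)|\psi_k^g\rangle_{g_0},\qquad c_k^g(0)=(c_k^g)'(0)=0,
\]
hence $c_k^g(t)=-\int_0^t(\sqrt{\lambda_k^g})^{-1}\sin(\sqrt{\lambda_k^g}(t-s))\langle h(s)|\psi_k^g\rangle_{g_0}\,ds$. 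Integrating by parts $2N$ times with $N:=n+2$, each pair of IBPs gains a factor $1/\lambda_k^g$ and produces a boundary term at $t$, which yields
\[
c_k^g(t)=\sum_{j=0}^{N-1}\frac{(-1)^{j+1}\langle\partial_t^{2j}h(t)|\psi_k^g\rangle_{g_0}}{(\lambda_k^g)^{j+1}}+\mathcal{R}_k^g(h)(t),
\]
with $\mathcal{R}_k^g(h)(t)$ proportional to $(\lambda_k^g)^{-(N+1/2)}\int_0^t\sin(\sqrt{\lambda_k^g}(t-s))\langle\partial_t^{2N}h(s)|\psi_k^g\rangle_{g_0}\,ds$.

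Identifying $\sum_k\langle\varphi|\psi_k^g\rangle_{g_0}(\lambda_k^g)^{-(j+1)}\phi_k^g=(-1)^{j+1}(u^g)^{(j)}(0)(\varphi)/j!$ by \eqref{ser1} at $\lambda=0$, and applying $\partial_{\nu_{g_0}}$ (which coincides with $\partial_{\nu_g}$ on $\Gamma$ since $g_1$ and $g_2$ share their boundary trace), I obtain
\[
\Pi^g(h)(t)=\sum_{j=0}^{N-1}\frac{1}{j!}(\Lambda^g)^{(j)}(0)(\partial_t^{2j}h(t))+\partial_{\nu_{g_0}}\Bigl(\sum_k\mathcal{R}_k^g(h)(t)\phi_k^g\Bigr).
\]
The difference of the first $N=n+2$ summands, evaluated at $g_1$ and $g_2$, is controlled term-by-term by Theorem \ref{mt1}, giving a contribution bounded by $\mathbf{c}_\tau\delta(g_1,g_2)\sum_{j=0}^{N-1}\|\partial_t^{2j}h\|_{L^2((0,\tau),L^2(\Gamma))}\le\mathbf{c}_\tau\delta(g_1,g_2)\|h\|_{\mathcal{H}^{2n+4}}$.

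The technical heart is to bound the remainder difference $\sum_k\mathcal{R}_k^1(h)\psi_k^1-\sum_k\mathcal{R}_k^2(h)\psi_k^2$ in $L^2(\Sigma)$ by the same quantity. I would split it via
\[
\sum_k\mathcal{R}_k^1\psi_k^1-\sum_k\mathcal{R}_k^2\psi_k^2=\sum_k(\mathcal{R}_k^1-\mathcal{R}_k^2)\psi_k^1+\sum_k\mathcal{R}_k^2(\psi_k^1-\psi_k^2),
\]
then decompose $\mathcal{R}_k^1-\mathcal{R}_k^2$ using the mean-value bound
\[
\left|\frac{\sin(\sqrt{\lambda_k^1}\sigma)}{(\lambda_k^1)^{N+\frac{1}{2}}}-\frac{\sin(\sqrt{\lambda_k^2}\sigma)}{(\lambda_k^2)^{N+\frac{1}{2}}}\right|\le\mathbf{c}_\tau\frac{|\lambda_k^1-\lambda_k^2|}{\lambda_k^{N+1}}
\]
together with the bilinearity in $\psi_k^g$. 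Each resulting piece is summed using \eqref{el10}, \eqref{ee} and H\"older's inequality with the admissible exponents $p<2n/(2n-1)$ and $q<4n/(4n-1)$, exactly as in \eqref{r2}--\eqref{r4}. The main obstacle I anticipate is verifying that $N=n+2$ IBPs provide enough decay so that, after invoking the $H^{7/4}(\Omega)$-trace characterization from the start of Section \ref{sb2.4}, the $\ell^2$-type summability of the remainder Fourier coefficients converts to a uniform-in-$t$ $L^2(\Gamma)$-estimate, and that the time-oscillatory factors do not spoil this gain after taking the $L^2$ norm in $t\in(0,\tau)$.
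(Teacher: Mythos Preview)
Your proposal is correct and follows essentially the same strategy as the paper: decompose $\Pi^g(h)$ into a finite sum of $(\Lambda^g)^{(j)}(0)$ applied to $\partial_t^{2j}h$ plus a spectral remainder, control the former by Theorem \ref{mt1}, and estimate the remainder difference by a four-term splitting handled exactly as in \eqref{r2}--\eqref{r4}.

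The only noteworthy difference is how the decomposition formula is obtained. The paper proceeds in two steps: Lemma \ref{lemwave} constructs the expansion abstractly by iterating the resolvent $R^g(\lambda)$ (defining $W_j^g(h)(\lambda)=[R^g(\lambda)]^j(-\partial_t^2)^jW_0^g(h)(\lambda)$, identifying this with $\frac{(-1)^j}{j!}(u^g)^{(j)}(\lambda)((-\partial_t^2)^jh)$ via \eqref{F1}, and checking that the sum satisfies the IBVP up to a remainder $r^g(h)$), and then Lemma \ref{lemwave2} computes $\partial_{\nu_g}r^g(h)$ spectrally to arrive at \eqref{F3}. You instead derive the same series in one shot: expand $w^g(h)$ in eigenfunctions, read off the forced harmonic oscillator ODE for the Fourier coefficients, and integrate by parts $2(n+2)$ times. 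This is more elementary and makes the role of the threshold $2n+4$ (vanishing initial data up to order $2n+3$ killing all boundary terms at $s=0$) completely transparent; the paper's route has the advantage of packaging the remainder as a genuine PDE solution $r^g(h)\in H^{2,2}(Q)$, which justifies the trace $\partial_{\nu_g}r^g(h)$ without appealing to $H^{7/4}$-summability of the series. Both lead to the identical remainder series and the identical splitting ($I_1$--$I_4$ in the paper, your two-plus-two decomposition), so the final estimates coincide.
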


Before proving Theorem \ref{mt2}, we establish two lemmas.

\begin{lemma}\label{lemwave}
Let $\ell \ge 1$ be an integer and $h\in \mathcal{H}^{2\ell+2}((0,\tau),H^{\frac{3}{2}} (\Gamma))$. Then the following formula holds.
\begin{equation}\label{for2}
\Pi^g(h)(\cdot ,t)=\sum_{j=0}^{\ell}(\Lambda^g)^{(j)}(0)((-\partial_t^2)^j  h(\cdot,t))+\partial_{\nu_g}r^g(h)(t),\quad t\in [0,\tau],
\end{equation}
where $r^g(h)\in H^{2,2}(Q)$ is the solution of the IBVP
\begin{equation}\label{w1.0}
\left\{
\begin{array}{ll}
(\partial_t^2-\Delta_g)r=\frac{(-1)^{\ell+1}}{\ell !}(u^g)^{(\ell)}(0)((-\partial_t^2)^{\ell+1}h(\cdot,t))\quad \mathrm{in}\; Q,
\\
r(0,\cdot)=0,\
\\
\partial_tr(0,\cdot)=0,
\\
r=0 \quad \mathrm{on}\;  \Sigma .
\end{array}
\right.
\end{equation}
\end{lemma}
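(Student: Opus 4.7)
The plan is to mimic, at the hyperbolic level, the Taylor expansion of the elliptic solution operator $u^g(\lambda)$ around $\lambda=0$, with $\lambda$ formally replaced by $-\partial_t^2$ acting on the boundary data. Concretely, I will introduce the ``partial Taylor'' ansatz
\[
W_\ell(x,t) := \sum_{j=0}^{\ell} a_j\,(u^g)^{(j)}(0)\!\left((-\partial_t^2)^j h(\cdot,t)\right)(x),
\]
with scalar coefficients $a_j$ chosen so that $(\partial_t^2-\Delta_g)W_\ell$ collapses telescopically to exactly the source term of \eqref{w1.0}. Then $W_\ell-w^g(h)$ will be identified with the function $r^g(h)$ defined in the lemma, and formula \eqref{for2} will follow by taking the normal derivative.

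First I will check the boundary and initial data of $W_\ell$. Since $u^g(0)(\varphi)|_\Gamma=\varphi$ while, by the BVP \eqref{spe}, $(u^g)^{(j)}(0)(\varphi)|_\Gamma=0$ for every $j\ge 1$, only the $j=0$ term contributes on $\Sigma$, giving $W_\ell|_\Sigma=a_0 h$; hence I fix $a_0=1$. The assumption $h\in\mathcal{H}^{2\ell+2}((0,\tau),H^{\frac{3}{2}}(\Gamma))$ forces $\partial_t^k h(\cdot,0)=0$ for all $0\le k\le 2\ell+1$, so that each term of $W_\ell$ and of $\partial_tW_\ell$ vanishes at $t=0$.

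The core computation is the evaluation of $(\partial_t^2-\Delta_g)W_\ell$. Commuting the $t$-independent operators $(u^g)^{(j)}(0)$ with $\partial_t^2$ gives
\[
\partial_t^2 W_\ell = -\sum_{j=0}^{\ell} a_j\,(u^g)^{(j)}(0)\!\left((-\partial_t^2)^{j+1}h\right),
\]
and the identity $-\Delta_g(u^g)^{(j)}(0)(\varphi)=-j\,(u^g)^{(j-1)}(0)(\varphi)$ for $j\ge 1$ (which is \eqref{spe} at $\lambda=0$, the $j=0$ contribution vanishing since $u^g(0)(\varphi)$ is harmonic), combined with an index shift, yields
\[
-\Delta_g W_\ell = -\sum_{j=0}^{\ell-1}(j+1)a_{j+1}\,(u^g)^{(j)}(0)\!\left((-\partial_t^2)^{j+1}h\right).
\]
Imposing the telescoping condition $a_j+(j+1)a_{j+1}=0$ therefore forces $a_j=(-1)^j/j!$, and all intermediate contributions cancel, leaving
\[
(\partial_t^2-\Delta_g)W_\ell = \frac{(-1)^{\ell+1}}{\ell!}\,(u^g)^{(\ell)}(0)\!\left((-\partial_t^2)^{\ell+1}h\right),
\]
which is precisely the source of \eqref{w1.0}. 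Because $W_\ell$ and $w^g(h)$ share the same Cauchy data and the same Dirichlet trace $h$ on $\Sigma$, the difference $W_\ell-w^g(h)$ solves the IBVP \eqref{w1.0} and therefore coincides with $r^g(h)$. Taking $\partial_{\nu_g}$ of the resulting identity $w^g(h)=W_\ell-r^g(h)$ and using $\partial_{\nu_g}(u^g)^{(j)}(0)(\varphi)=(\Lambda^g)^{(j)}(0)(\varphi)$ then produces formula \eqref{for2}.

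The main technical point I expect is the regularity assertion $r^g(h)\in H^{2,2}(Q)$. I would derive it from $W_\ell\in H^{2,2}(Q)$ by iterating the elliptic a priori bound of Lemma \ref{lemEae} to control each $(u^g)^{(j)}(0)(\varphi)$ in $H^2(\Omega)$ by $\|\varphi\|_{H^{\frac{3}{2}}(\Gamma)}$, combined with the $\mathcal{H}^{2\ell+2}$-hypothesis on $h$: the latter guarantees that $(-\partial_t^2)^j h(\cdot,t)$ belongs to $L^2((0,\tau),H^{\frac{3}{2}}(\Gamma))$ for $0\le j\le \ell+1$, so that the source of the IBVP \eqref{w1.0} lies in $L^2(Q)$ and the standard theory from \cite{LM} provides the required $H^{2,2}$-solution.
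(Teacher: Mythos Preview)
Your proposal is correct and follows essentially the same route as the paper: build the ansatz $W_\ell=\sum_{j=0}^{\ell}\frac{(-1)^j}{j!}(u^g)^{(j)}(0)\bigl((-\partial_t^2)^j h\bigr)$, check its boundary and initial data, compute $(\partial_t^2-\Delta_g)W_\ell$ telescopically, and identify $W_\ell-w^g(h)$ with the solution $r^g(h)$ of \eqref{w1.0}. The paper reaches the same coefficients $(-1)^j/j!$ by iterating the resolvent $R^g(\lambda)$ rather than by solving your recursion $a_j+(j+1)a_{j+1}=0$, but the two arguments are otherwise identical (including the harmless sign/coefficient mismatch between the derived identity $w^g(h)=W_\ell-r^g(h)$ and the displayed formula \eqref{for2}).
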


\begin{proof}
For $t \in [0,\tau]$ and $\lambda\in \rho(-A^g)$, let
\[
W_0^g(h)(\lambda)(\cdot, t)=u^g(\lambda)(h(\cdot,t))
\]
and, for $1\le j\le \ell$, set
\[
W_j^g(h)(\lambda)(\cdot,t)=R^g(\lambda)(-\partial_t^2W_{j-1}^g(h)(\lambda)(\cdot, t)),\quad 1\le j\le \ell.
\]
Using the smoothness with respect to $\lambda$ and $t$, we have
\begin{align*}
W_j^g(h)(\lambda)(\cdot,t)&=R^g(\lambda)(-\partial_t^2R^g(\lambda)(-\partial_t^2W_{j-2}^g(h)(\lambda)(\cdot, t)))
\\
&=[R^g(\lambda)]^2((-\partial_t^2)^2W_{j-2}^g(h)(\lambda)(\cdot, t))
\\
&\ldots
\\
&=[R^g(\lambda)]^j((-\partial_t^2)^jW_0^g(h)(\lambda)(\cdot, t)).
\end{align*}
That is we have
\[
W_j^g(h)(\lambda)(\cdot,t)=[R^g(\lambda)]^j((-\partial_t^2)^ju^g(\lambda)(h(\cdot,t)))
\]
and hence
\begin{align*}
W_j^g(h)(\lambda)(\cdot,t)&=[R^g(\lambda)]^j(u^g(\lambda)((-\partial_t^2)^jh(\cdot,t)))
\\
&=-\sum_{k\ge 1}\frac{\langle(-\partial_t^2)^jh(\cdot,t)|\psi_k^g\rangle_g}{(\lambda_k^g+\lambda)^{j+1}}\phi_k^g.
\end{align*}
We derive from this formula the following one
\[
\frac{d}{d\lambda}(-\partial_t^2)W_j^g(h)(\lambda)(\cdot,t)=-(j+1)[R^g(\lambda)]^{j+1}(u^g(\lambda)((-\partial_t^2)^{j+1}h(\cdot,t))).
\]
Equivalently, we have
\[
W_j^g(h)(\lambda)(\cdot,t)=-\frac{1}{j}\frac{d}{d\lambda}(-\partial_t^2)W_{j-1}^g(h)(\lambda)(\cdot,t).
\]
Therefore, we obtain the following formula
\[
W_j^g(h)(\lambda)(\cdot,t)=\frac{(-1)^j}{j!}\frac{d^j}{d\lambda^j}(-\partial_t^2)^jW_0^g(h)(\lambda)(\cdot,t),
\]
from which we deduce that
\[
W_j^g(h)(\lambda)(\cdot,t)=\frac{(-1)^j}{j!}(u^g)^{(j)}(\lambda)((-\partial_t^2)^jh(\cdot,t)).
\]

Set
\begin{equation}\label{for0}
w_j^g(h)(\cdot,t):=W_j^g(h)(0)(\cdot,t)=\frac{(-1)^j}{j!}(u^g)^{(j)}(0)((-\partial_t^2)^jh(\cdot,t)).
\end{equation}

Note that, since $\partial_t^2w_\ell^g(h)\in H^1((0,\tau),H^2(\Omega))$, we have $r^g(h)\in H^{2,2}(Q)$.

Next,  we have $\Delta_g (u^g)^{(0)}(0)(h(\cdot,t))=0$ and from \eqref{spe}, we obtain
\[
 \Delta_g (u^g)^{(j)}(0)((-\partial_t^2)^jh(\cdot,t))=-j(u^g)^{(j-1)}(0)((-\partial_t^2)^jh(\cdot,t)),\; j\ge 1. 
\]
In other words, we have
\[
\Delta_gw_0^g(h)=0,\quad \Delta_g w^g_j(h)=\partial_t^2w_{j-1}^g,\; j\ge 1,
\]
and then
\[
(\partial_t^2-\Delta_g)\sum_{j=0}^\ell w_j^g(h)=\partial_t^2w_\ell^g(h).
\]
Hence 
\begin{equation}\label{for1}
\sum_{j=0}^\ell w_j^g(h)+r^g(h)=w^g(h).
\end{equation}
In light of \eqref{for0} and \eqref{for1}, we obtain
\[
\Pi^g(h)(\cdot ,t)=\sum_{j=0}^\ell (\Lambda^g)^{(j)}(0)((-\partial_t^2)^j  h(\cdot,t))+\partial_{\nu_g}r^g(h)(t).
\]
This is the expected formula.
\end{proof}

In the following, $r^g(h)$, $h\in \mathcal{H}^{2n+4}((0,\tau),H^{\frac{3}{2}} (\Gamma))$, will denote the solution of \eqref{w1.0} with $\ell=n+1$. 

\begin{lemma}\label{lemwave2}
We have for all $g\in \mathbf{G}$ and $h\in \mathcal{H}^{2n+4}((0,\tau),H^{\frac{3}{2}} (\Gamma))$ 
\begin{equation}\label{F3}
\partial_{\nu_g}r^g(h)= \sum_{k\ge 1}\int_0^t\frac{ \langle (-\partial_t^2)^{n+2}h(\cdot ,s)|\psi_k^g\rangle_g}{(\lambda_k^g)^{n+2}}s_k^g(t-s)\psi_k^g ds,
\end{equation}
where
\[
s_k^g(t)=\frac{\sin\left( \sqrt{\lambda_k^g}\, t\right)}{\sqrt{\lambda_k^g}},\quad t\in [0,\tau].
\]
\end{lemma}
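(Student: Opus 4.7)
The plan is to solve the IBVP \eqref{w1.0} explicitly by spectral expansion in the basis $(\phi_k^g)$, and then apply $\partial_{\nu_g}$ term-wise. As a preliminary step, I would simplify the source term in \eqref{w1.0} with $\ell=n+1$ by substituting the formula \eqref{ser1} at $\lambda=0$ and $j=n+1$, namely
\[
(u^g)^{(n+1)}(0)(\varphi)=(-1)^{n+2}(n+1)!\sum_{k\ge 1}\frac{\langle\varphi|\psi_k^g\rangle_g}{(\lambda_k^g)^{n+2}}\phi_k^g.
\]
The factor $(-1)^{\ell+1}/\ell!=(-1)^{n+2}/(n+1)!$ cancels the prefactor, so the right-hand side of \eqref{w1.0} becomes the clean series $\sum_{k\ge 1}(\lambda_k^g)^{-(n+2)}\langle(-\partial_t^2)^{n+2}h(\cdot,t)|\psi_k^g\rangle_g\,\phi_k^g$.

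Next, writing the solution as $r^g(h)(\cdot,t)=\sum_{k\ge 1}r_k(t)\phi_k^g$, the orthonormality of $(\phi_k^g)$ together with $-\Delta_g\phi_k^g=\lambda_k^g\phi_k^g$ and the homogeneous boundary condition reduce \eqref{w1.0} to the family of scalar ODEs
\[
r_k''(t)+\lambda_k^g r_k(t)=\frac{\langle(-\partial_t^2)^{n+2}h(\cdot,t)|\psi_k^g\rangle_g}{(\lambda_k^g)^{n+2}},\qquad r_k(0)=r_k'(0)=0.
\]
By Duhamel's principle, the solution is $r_k(t)=\int_0^t (\text{source coefficient at }s)\cdot s_k^g(t-s)\,ds$. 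Summing up, the claimed formula holds formally upon applying $\partial_{\nu_g}$ term-wise and using $\partial_{\nu_g}\phi_k^g=\psi_k^g$.

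The main obstacle is to justify the term-wise action of $\partial_{\nu_g}$, i.e.\ the interchange of the trace with the infinite sum. I would do this by proving that the series for $r^g(h)(\cdot,t)$ converges in $H^{\frac{7}{4}}(\Omega)$ uniformly in $t\in[0,\tau]$, so that the continuity of the trace map $w\in H^{\frac{7}{4}}(\Omega)\mapsto \partial_{\nu_g}w\in L^2(\Gamma)$ (used already in Section \ref{sb2.3}) yields the result. Using the equivalence of norms on $H^{\frac{7}{4}}(\Omega)\cap H_0^1(\Omega)$ recalled in Section \ref{sb2.4}, this amounts to
\[
\sum_{k\ge 1}k^{\frac{7}{2n}}|r_k(t)|^2<\infty,
\]
uniformly in $t$. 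Since $|r_k(t)|\le \tau (\lambda_k^g)^{-(n+2)}\|\psi_k^g\|_{0,2}\|(-\partial_t^2)^{n+2}h\|_{L^\infty((0,\tau),L^2(\Gamma))}$, the bounds $\|\psi_k^g\|_{0,2}\le \mathbf{c}k^{7/(4n)}$ from Lemma \ref{trb1} and $(\lambda_k^g)^{-(n+2)}\le \mathbf{c}k^{-2(n+2)/n}$ from Lemma \ref{lem1} yield $k^{7/(2n)}|r_k(t)|^2=O(k^{-4(n+2)/n+21/(2n)})$, which is summable for every $n\ge 3$ (the exponent is negative since $8(n+2)>21$). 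The hypothesis $h\in\mathcal{H}^{2n+4}((0,\tau),H^{\frac{3}{2}}(\Gamma))$ is exactly what ensures $(-\partial_t^2)^{n+2}h$ takes values in $H^{\frac{3}{2}}(\Gamma)\subset L^2(\Gamma)$, so that all the pairings make sense. This legitimizes interchanging the sum and the normal trace, and the Duhamel formula for each $r_k$ then delivers the stated identity \eqref{F3}.
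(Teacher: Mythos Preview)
Your proposal is correct and follows essentially the same route as the paper: expand $r^g(h)$ in the eigenbasis $(\phi_k^g)$, solve each mode by Duhamel with kernel $s_k^g$, simplify the source coefficient via the series for $(u^g)^{(n+1)}(0)$, and justify the term-wise normal trace by proving convergence of the series in $H^{7/4}(\Omega)$ through the bounds of Lemmas~\ref{lem1} and~\ref{trb1}. Two cosmetic slips: the exponent in your tail estimate should be $-4(n+2)/n+14/(2n)$ rather than $+21/(2n)$ (you have one factor of $\|\psi_k^g\|_{0,2}$, hence $\|\psi_k^g\|_{0,2}^2=O(k^{7/(2n)})$), and summability requires the exponent to be $<-1$, not merely negative---both conditions are nonetheless satisfied for all $n\ge 3$, so the argument stands.
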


\begin{proof}
We verify that
\[
r^g(h)(t)=\sum_{k\ge 1} r_k^g(t)\phi_k^g,
\]
where
\[
r_k^g(t)=\frac{(-1)^{n+2}}{(n+1)!}\int_0^t s_k^g(t-s)((u^g)^{(n+1)}(0)((-\partial_t^2)^{n+2}h(\cdot,s))|\phi_k^g)_gds.
\]
If $\tilde{h}:=(-\partial_t^2)^{n+2}h$, then 
\[
r_k^g(t)=\frac{(-1)^{n+2}}{(n+1)!}\int_0^t s_k^g(t-s)((u^g)^{(n+1)}(0)(\tilde{h}(\cdot,s))|\phi_k^g)_gds.
\]
But from \eqref{F1}, we have
\begin{align*}
(u^g)^{(n+1)}(0)(\tilde{h}(\cdot ,t))&=(-1)^{n+1}(n+1)!(R^g)^{n+1}(0)(u^g(0)(\tilde{h}(\cdot ,t))
\\
&=(-1)^{n+1}(n+1)!\sum_{\ell \ge 1}\frac{ (u^g(0)(\tilde{h}(\cdot ,t))|\phi_\ell^g)_g}{(\lambda_\ell^g)^{n+1}}\phi_\ell^g.
\end{align*}
Thus,
\[
r_k^g(t)=-\int_0^t\frac{ (u^g(0)(\tilde{h}(\cdot ,s))|\phi_k^g)_g}{(\lambda_k^g)^{n+1}}s_k^g(t-s)ds.
\]
On the other hand, an integration by parts yields
\[
(u^g(0)(\tilde{h}(\cdot ,s))|\phi_k^g)_g=-\frac{1}{\lambda_k^g}\langle \tilde{h}(\cdot ,s)|\psi_k^g\rangle_g.
\]
That is, we have
\[
r_k^g(t)=\int_0^t\frac{ \langle \tilde{h}(\cdot ,s)|\psi_k^g\rangle_g}{(\lambda_k^g)^{n+2}}s_k^g(t-s)ds
\]
and hence
\begin{equation}\label{F2}
r^g(h)= \sum_{k\ge 1}\int_0^t\frac{ \langle \tilde{h}(\cdot ,s)|\psi_k^g\rangle_g}{(\lambda_k^g)^{n+2}}s_k^g(t-s)\phi_k^gds.
\end{equation}
By Cauchy-Schwarz's inequality, we obtain
\begin{align*}
\left|\int_0^t \langle \tilde{h}(\cdot ,s)|\psi_k^g\rangle_g s_k^g(t-s)ds\right|^2&\le \mathbf{c}_\tau\frac{\|\tilde{h}\|_{L^2(\Sigma)}^2\|\psi_k^g\|_{L^2(\Gamma)}^2}{\lambda_k^g}
\\
&\le \mathbf{c}_\tau k^{\frac{3}{2n}}\|\tilde{h}\|_{L^2(\Sigma)}^2,
\end{align*}
which gives
\[
k^{\frac{7}{2n}}\left|\int_0^t\frac{ \langle \tilde{h}(\cdot ,s)|\psi_k^g\rangle_g}{(\lambda_k^g)^{n+2}}s_k^g(t-s)ds\right|^2\le \mathbf{c}_\tau k^{-\frac{2n-1}{n}}\|\tilde{h}\|_{L^2(\Sigma)}^2.
\]
Therefore, the series in the right hand side of \eqref{F2} converges in $H^{\frac{7}{4}}(\Omega)$ and then
\[
\partial_{\nu_g}r^g(h)= \sum_{k\ge 1}\int_0^t\frac{ \langle \tilde{h}(\cdot ,s)|\psi_k^g\rangle_g}{(\lambda_k^g)^{n+2}}s_k^g(t-s)\psi_k^gds.
\]
The proof is complete.
\end{proof}

\begin{proof}[Proof of Theorem \ref{mt2}]
Let $g_1,g_2\in \mathbf{G}_0$. We decompose $\partial_{\nu_{g_0}}r^1(h)-\partial_{\nu_{g_0}}r^2(h)$ into four terms:
\[
\partial_{\nu_{g_1}}r^1(h)-\partial_{\nu_{g_2}}r^2(h)=I_1+I_2+I_3+I_4,
\]
where
\begin{align*}
&I_1= \sum_{k\ge 1} \int_0^t\left(\frac{1}{(\lambda_k^1)^{n+2}}-\frac{1}{(\lambda_k^2)^{n+2}}\right)\langle \tilde{h}(\cdot ,s)|\psi_k^1\rangle_{g_0} s_k^1(t-s)\psi_k^1 ds,
\\
&I_2=\sum_{k\ge 1}\int_0^t\frac{ \langle \tilde{h}(\cdot ,s)|\psi_k^1-\psi_k^2\rangle_{g_0}}{(\lambda_k^2)^{n+2}}s_k^1(t-s)\psi_k^1 ds,
\\
&I_3=\sum_{k\ge 1}\int_0^t\frac{ \langle \tilde{h}(\cdot ,s)|\psi_k^2\rangle_{g_0}}{(\lambda_k^2)^{n+2}}(s_k^1(t-s)-s_k^2(t-s))\psi_k^1 ds,
\\
&I_4=\sum_{k\ge 1}\int_0^t\frac{ \langle \tilde{h}(\cdot ,s)|\psi_k^2\rangle_{g_0}}{(\lambda_k^2)^{n+2}}s_k^2(t-s)(\psi_k^1-\psi_k^2)ds.
\end{align*}
Proceeding similarly to the preceding sections, we show
\begin{equation}\label{W1}
\|\partial_{\nu_{g_0}}r^1(h)-\partial_{\nu_{g_0}}r^2(h)\|_{0,2}\le \mathbf{c}_\tau \delta(g_1,g_2)\|\tilde{h}\|_{0,2}.
\end{equation}
Combining \eqref{mte1}, \eqref{for2} and \eqref{W1}, we obtain \eqref{mte2}.
\end{proof}

\begin{proof}[Proof of Theorem \ref{thm2.3}] It is an immediate consequence of Theorems \ref{thm2.2} and \ref{mt2} and the fact that $\mathcal{H}^{2n+4}((0,\tau),H^{\frac{3}{2}} (\Gamma))$ is dense in $H_{0,}^1(\Sigma)$.
\end{proof}

\begin{remark}
{\rm
We emphasize that we cannot obtain a stability inequality by combining  Theorem \ref{mt2} and Theorem \ref{thm2.2} because we have only 
\[\left\| \Pi^1-\Pi^2  \right\|_{\mathscr{B}(\mathcal{H}^{2n+4}((0,\tau),H^{\frac{3}{2}} (\Gamma)), L^2(\Sigma))}\le \varrho \left\| \Pi^1-\Pi^2  \right\|_{\mathscr{B}(H_{0,}^1(\Sigma), L^2(\Sigma))},\] for some constant $\varrho>0$. 

On the hand, the proof of Theorem \ref{thm2.2} in \cite{SU} is not sufficiently detailed to extend the stability inequality with $\left\| \Pi^1-\Pi^2  \right\|_{\mathscr{B}(\mathcal{H}^{2n+4}((0,\tau),H^{\frac{3}{2}} (\Gamma)), L^2(\Sigma))}$ instead of $\left\| \Pi^1-\Pi^2  \right\|_{\mathscr{B}(H_{0,}^1(\Sigma), L^2(\Sigma))}$.
}
\end{remark}

\section{Extensions}\label{s5}

In the present section, we explain how to modify the preceding proofs to establish stability inequalities for the problem of determining the conductivity or the potential in an elliptic equation from the corresponding BSD.

\subsection{The conductivity} \label{sb5.1}

Throughout this section,  $\Omega$ is $C^{1,1}$ bounded domain of $\mathbb{R}^n$. The boundary of $\Omega$ will be denoted again by $\Gamma$. For $\alpha >1$ and $\beta >0$ fixed, let
\[
\mathcal{A}=\left\{ a\in C^{1,1}(\overline{\Omega});\; \alpha^{-1} \le a\le \alpha ,\; \|a\|_{C^{1,1}(\overline{\Omega})}\le \beta\right\}.
\]
For $a\in \mathcal{A}$,  consider the unbounded operator
\[
A^au=-\mathrm{div}(a\nabla u),\quad u\in D(A^a)=H_0^1(\Omega)\cap H^2(\Omega).
\]
The sequence of eigenvalues of $A^a$ will be denoted $(\lambda_k^a)$:
\[
0<\lambda_1^a\le \lambda_2^a\le \ldots \le \lambda_k^a\le \ldots, \quad \lim_{k\rightarrow \infty}\lambda_k^a=\infty.
\]
Let $(\phi_k^a)$ be an orthonormal basis of $L^2(\Omega)$ consisting of eigenfunctions such that
\[
\phi_k^a\in H_0^1(\Omega)\cap H^2(\Omega),\quad A_a\phi_k^a=\lambda_k^a\phi_k^a,\quad k\ge 1. 
\]
The following notation will be used hereinafter
\[
\psi_k^a=\partial_{\nu_a} \phi_k^a:= a\nabla \phi_k^a\cdot \nu,\quad k\ge 1,
\]
where, as before, $\nu$ denotes the outward unit normal vector field to $\Gamma$. 

As we have done before, to simplify notation, any quantity $X_k^{a_j}$ will be written $X_k^j$, $j=1,2$, $k\ge 1$.

As in Section \ref{s1},  fix $1\le p<\frac{2n}{2n-1}$ and $1\le q\le \frac{4n}{4n-1}$. For $a_1,a_2\in \mathcal{A}$, define
\begin{align*}
&\delta(a_1,a_2):=\|(\lambda_k^1-\lambda_k^2)\|_{\ell^p}+\|(\psi_k^1-\psi_k^2)\|_{\ell^q(L^2(\Gamma))},
\\
&\delta_0(a_1,a_2):=\|(\ell^{\frac{7}{4n}}k^{-\frac{1}{4n}}(\delta_{k\ell}-(\phi_k^2|\phi_\ell^1)_{\mathbf{e}}))\|_{\ell^1(\mathbb{N}^2)},
\\
&\delta_+(a_1,a_2):= \delta(g_1,g_2)+\delta_0(g_1,g_2),
\end{align*}

Next, for $a\in \mathcal{A}$ and $\varphi \in H^{\frac{3}{2}}(\Gamma)$, $u^a(\varphi)\in H^2(\Omega)$  will denote the solution of the BVP
\[
-\mathrm{div}(a\nabla u)=0 \; \mathrm{in}\; \Omega ,\quad u{_{|\Gamma}}=\varphi,
\]
and
\[
v ^a(\varphi):=\partial_{\nu_a}u ^a(\varphi).
\]
By referring  to the a priori estimate in $H^2(\Omega)$, we verify that the mapping
\[
\Lambda^a:\varphi\in H^{\frac{3}{2}}(\Gamma)\mapsto v ^a(\varphi)\in L^2(\Gamma)
\]
defines a bounded operator.

It should be noted that the operator $A^a$ cannot be expressed in the form $A^g$ for any metric $g$. Therefore, Theorem \ref{mt1} is not applicable. However, we have a similar result  whose proof differs only slightly from that of Theorem \ref{mt1}. This result is stated below for the case $j=0$.

\begin{theorem}\label{mt1.1}
Let $a_1,a_2\in \mathcal{A}$ such that $\delta_+(a_1,a_2)<\infty$. Then $\Lambda^1-\Lambda^2$ extends to a bounded operator on $L^2(\Gamma)$, denoted again $\Lambda^1-\Lambda^2$, and we have 
\begin{equation}\label{mte1.1}
\|\Lambda^1-\Lambda^2\|_{\mathscr{B}(L^2(\Gamma))}\le \mathbf{c} \delta(a_1,a_2),
\end{equation}
where the constant $\mathbf{c} >0$  depends only on $n$, $\Omega$, $\alpha$ and $\beta$.
\end{theorem}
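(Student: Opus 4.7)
The plan is to mirror the proof of Theorem \ref{mt1} in the case $m=0$, $j=0$, with $A^g = -\Delta_g$ replaced by $A^a = -\mathrm{div}(a\nabla\,\cdot\,)$ and all metric pairings replaced by the Euclidean ones, and then to extend the resulting $H^{\frac{3}{2}}(\Gamma)\to L^2(\Gamma)$ estimate to an $L^2(\Gamma)\to L^2(\Gamma)$ estimate by density.

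First I would reproduce the preliminaries of Sections \ref{sb2.1}--\ref{sb2.4} in this setting. The Weyl-type bound $\vartheta^{-1}k^{2/n}\le \lambda_k^a\le \vartheta k^{2/n}$ follows from min--max exactly as in Lemma \ref{lem1}. Since $a\in C^{1,1}(\overline{\Omega})$ and $\Omega$ is $C^{1,1}$, the elliptic $H^2$ a priori estimate \cite[Theorem 9.14]{GT} applies to $a^{-1}A^a$ and yields the analogue of Lemma \ref{lemEae}, hence $\|\phi_k^a\|_{\frac{7}{4},2}\le \mathbf{c}(\lambda_k^a)^{7/8}$ and therefore
\[
\|\psi_k^a\|_{0,2}\le \mathbf{c}\, k^{\frac{7}{4n}},\qquad k\ge 1,\; a\in \mathcal{A}.
\]
Applying Green's formula twice to $u^a(\lambda)(\varphi)$ and $\phi_k^a$ (which vanishes on $\Gamma$) gives
\[
(u^a(\lambda)(\varphi)|\phi_k^a)=-\frac{\langle \varphi|\psi_k^a\rangle_{\mathbf{e}}}{\lambda_k^a+\lambda},\qquad \lambda\in\rho(-A^a),
\]
from which the analogue of Proposition \ref{prohol} follows verbatim: $\lambda\mapsto u^a(\lambda)(\varphi)$ and $\lambda\mapsto \Lambda^a(\lambda)(\varphi)$ are analytic on $\rho(-A^a)$, with the same series representations for their derivatives, uniformly convergent for $j>(n+3)/4$.

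With this functional calculus in place, the core argument of Section \ref{s3} transfers directly. Setting $\Lambda(\lambda):=\Lambda^1(\lambda)(\varphi)-\Lambda^2(\lambda)(\varphi)$ for $\varphi\in H^{\frac{3}{2}}(\Gamma)$ with $\|\varphi\|_{0,2}=1$, one splits $u^1-u^2$ into the three terms analogous to $u_{1,j}$, $u_{2,j}$, $u_{3,j}$ in \eqref{o4}--\eqref{o6} and applies Hölder's inequality with exponents $p<\tilde p<2n/(n-1)$ and $q<\tilde q<4n/(2n-1)$, together with the assumption $\delta_0(a_1,a_2)<\infty$ on the orthogonality defect, to obtain the analogue of Lemma \ref{lemo1}:
\[
\lim_{\lambda\to\infty}\lambda^{j}\bigl\|(\Lambda^1)^{(j)}(\lambda)(\varphi)-(\Lambda^2)^{(j)}(\lambda)(\varphi)\bigr\|_{0,2}=0,\qquad j\ge 0.
\]
Taylor's formula with $m=0$ and $j_0=\lfloor (n+3)/4\rfloor+1$, combined with the series \eqref{ser3} and the elementary bounds \eqref{te1}--\eqref{te2} of Lemma \ref{lemte}, yields $\|\Upsilon\|_{0,2}\le \mathbf{c}\,\delta(a_1,a_2)$, and letting $\lambda\to\infty$ gives
\[
\|\Lambda^1(\varphi)-\Lambda^2(\varphi)\|_{0,2}\le \mathbf{c}\,\delta(a_1,a_2)\,\|\varphi\|_{0,2},\qquad \varphi\in H^{\frac{3}{2}}(\Gamma).
\]

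Since the right-hand side is controlled by the $L^2(\Gamma)$ norm of $\varphi$ and $H^{\frac{3}{2}}(\Gamma)$ is dense in $L^2(\Gamma)$, the operator $\Lambda^1-\Lambda^2$ extends uniquely to a bounded operator on $L^2(\Gamma)$ satisfying \eqref{mte1.1}. The only real subtlety, and what I expect to be the main obstacle, is the careful bookkeeping: one must verify that every appearance of $\partial_{\nu_g}$, $\langle\,\cdot\,|\,\cdot\,\rangle_g$, $dV_g$, $dS_g$ in Sections \ref{s2}--\ref{s3} can be systematically replaced by its Euclidean counterpart (with the conductivity $a$ absorbed into $\psi_k^a$) without disturbing the summability thresholds $7/(4n)$, $1/(2n)$, $1/(4n)$ that drive convergence of the series, and that the $H^2$ elliptic estimate still holds uniformly in $a\in\mathcal A$, which is exactly where the hypothesis $a\in C^{1,1}$ with $\Omega$ of class $C^{1,1}$ is used.
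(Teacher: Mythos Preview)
Your proposal is correct and follows essentially the same approach as the paper, which does not give a detailed proof but states that it ``differs only slightly from that of Theorem \ref{mt1}.'' Your outline of the adaptations---replacing metric quantities by Euclidean ones with the conductivity absorbed into $\psi_k^a$, checking that the $C^{1,1}$ hypotheses on $a$ and $\Omega$ suffice for the $H^2$ estimate, and extending by density from $H^{\frac{3}{2}}(\Gamma)$ to $L^2(\Gamma)$---is exactly what the paper intends.
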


Let $\chi_I$ denotes the characteristic function of the interval $I$ and define
\[
\Psi_\varsigma=|\ln \delta|^{-\frac{2}{2+n}}\chi_{(0,\varsigma]}+\delta\chi_{(\varsigma,\infty)},\quad 0<\varsigma<1.
\]
We extend $\Psi_\varsigma$ by continuity at $\delta=0$ by setting $\Psi_\varsigma(0)=0$. 

We now use Theorem \ref{mt1.1} to prove the following result.

\begin{theorem}\label{thm1}
There exist two constants $\mathbf{c}>0$ and $0<\varsigma<1$, depending only on $n$, $\Omega$, $\alpha$ and $\beta$, such that for all $a_1,a_2\in \mathcal{A}$ verifying $\delta_+(a_1,a_2)<\infty$ we have
\[
\|a_1-a_2\|_{H^1(\Omega)}\le \mathbf{c}\Psi_\varsigma(\delta(a_1,a_2)).
\]
\end{theorem}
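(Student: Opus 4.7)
The plan is to mimic the argument used for Theorem \ref{thm2.1}: reduce the BSD-stability problem to a known DtN-stability result for the conductivity, then chain the two inequalities. The stability result I have in mind is the classical logarithmic bound of Alessandrini, which says that under the $C^{1,1}$ regularity and the uniform bounds defining $\mathcal{A}$, there exist constants $\mathbf{c}'>0$ and $0<\varsigma<1$, depending only on $n$, $\Omega$, $\alpha$, $\beta$, such that for all $a_1,a_2\in \mathcal{A}$
\[
\|a_1-a_2\|_{H^1(\Omega)}\le \mathbf{c}'\, \Psi_\varsigma\!\left(\|\Lambda^1-\Lambda^2\|_{\mathscr{B}(H^{1/2}(\Gamma),H^{-1/2}(\Gamma))}\right).
\]
The exponent $2/(n+2)$ appearing in the definition of $\Psi_\varsigma$ is precisely the one that comes out of Alessandrini's argument for $H^1$-stability of the conductivity.

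The first step is to translate Theorem \ref{mt1.1}, which is formulated in the strong $\mathscr{B}(L^2(\Gamma))$ norm, into a bound in the weaker DtN norm $\mathscr{B}(H^{1/2}(\Gamma),H^{-1/2}(\Gamma))$. This is essentially for free: one has the continuous embeddings $H^{1/2}(\Gamma)\hookrightarrow L^2(\Gamma)\hookrightarrow H^{-1/2}(\Gamma)$, and $H^{3/2}(\Gamma)$ is dense in $H^{1/2}(\Gamma)$. Combining these with the uniform bound \eqref{mte1.1} on $H^{3/2}(\Gamma)$ and extending by density yields
\[
\|\Lambda^1-\Lambda^2\|_{\mathscr{B}(H^{1/2}(\Gamma),H^{-1/2}(\Gamma))}\le \mathbf{c}\,\delta(a_1,a_2).
\]

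The second step is to plug this into Alessandrini's inequality. Since $\Psi_\varsigma$ is non-decreasing on $(0,\infty)$ and behaves sublinearly near the origin, we can absorb the factor $\mathbf{c}$ by shrinking $\varsigma$ (exactly as in the completion of the proof of Theorem \ref{thm2.1}, where the threshold was modified to accommodate the multiplicative constant), and conclude that
\[
\|a_1-a_2\|_{H^1(\Omega)}\le \mathbf{c}\,\Psi_\varsigma(\delta(a_1,a_2)),
\]
for a (possibly new) $\varsigma$ and constant $\mathbf{c}$ depending only on $n$, $\Omega$, $\alpha$, $\beta$.

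The only non-routine aspect is Step 1, namely ensuring that Theorem \ref{mt1.1} (whose proof is the analogue of Theorem \ref{mt1} adapted to the operator $A^a$) does give a bound in a norm at least as strong as $\mathscr{B}(H^{1/2},H^{-1/2})$; once we have that, the rest is an invocation of a known elliptic inverse problem result. The main obstacle would have been if the BSD-to-DtN estimate only controlled $\Lambda^1-\Lambda^2$ on a very smooth space (like $H^{3/2}$), in which case interpolation with an a priori bound would be required; but the $L^2(\Gamma)$ target norm in \eqref{mte1.1} is strong enough that the density argument suffices and no quantitative interpolation trade-off is needed.
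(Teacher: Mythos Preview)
Your proof is correct and follows essentially the same route as the paper: first pass from Theorem \ref{mt1.1} to a bound on $\|\Lambda^1-\Lambda^2\|_{\mathscr{B}(H^{1/2}(\Gamma),H^{-1/2}(\Gamma))}$ via density, then feed this into a known DtN-to-conductivity stability estimate. The only cosmetic difference is that the paper cites a parametric inequality from \cite{Ch2} and performs the minimization in $\rho$ explicitly to extract the logarithmic modulus $\Psi_\varsigma$, whereas you invoke the final-form Alessandrini bound directly; the underlying argument is the same.
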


\begin{proof}
Let $a_1,a_2\in \mathcal{A}$ such that $\delta_+(a_1,a_2)<\infty$. By proceeding in a similar manner to the proof of Theorem \ref{thm2.1}, we obtain from Theorem \ref{mt1.1} 
\begin{equation}\label{equ1.1}
\|\Lambda^1-\Lambda^2\|_{\mathscr{B}(H^{\frac{1}{2}}(\Gamma),H^{-\frac{1}{2}}(\Gamma))}\le \mathbf{c}_1\delta (a_1,a_2),
\end{equation}
where the constant $\mathbf{c}_1 >0$  depends only on $n$, $\Omega$, $\alpha$ and $\beta$.

On the other hand, By \cite[(3.10) with $p=2n$]{Ch2} there exist three constants $\mathbf{c}_0>0$, $\gamma >0$ and $\rho_0>0$, depending only on $n$, $\Omega$, $\alpha$ and $\beta$, such that
\begin{equation}\label{x}
\mathbf{c}_0\|a_1-a_2\|_{H^1(\Omega)}\le \rho^{-\frac{2}{2+n}}+e^{\gamma\rho}\left[\|\Lambda^1-\Lambda^2\|_{\mathscr{B}(H^{\frac{1}{2}}(\Gamma),H^{-\frac{1}{2}}(\Gamma))}\right]^{\frac{1}{3}},\quad \rho\ge \rho_0.
\end{equation}
Putting together \eqref{equ1.1} and \eqref{x}, we get
\[
\mathbf{c}\|a_1-a_2\|_{H^1(\Omega)}\le \rho^{-\frac{2}{2+n}}+e^{\gamma \rho}\left[\delta(a_1,a_2)\right]^{\frac{1}{3}},\quad \rho\ge \rho_0.
\]
The expected inequality is then obtained by minimizing the right-hand side of the above inequality with respect to $\rho$.
\end{proof}

\subsection{The potential}\label{sb5.2}

Fix $\mathfrak{c}>0$ and define
\[
\mathbf{V}=\{V\in L^\infty(\Omega,\mathbb{R});\; 0\le V\le \mathfrak{c}\}.
\]

Let $g\in \mathbf{G}$ be fixed and, for $V\in \mathbf{V}$, define the unbounded operator $A^V$ by
\[
A^Vu=(-\Delta_g+V)u,\quad u\in D(A^V)=H_0^1(\Omega)\cap H^2(\Omega).
\]
As it is known, the spectrum of $A^V$ consists of a sequence $(\lambda_k^g)$:
\[
0<\lambda_1^V\le \lambda_2^V\le \ldots \le \lambda_k^V\le \ldots, \quad \lim_{k\rightarrow \infty}\lambda_k^V=\infty.
\]
In the following, $(\phi_k^V)$ will denote an orthonormal basis of $L^2(\Omega,dV_g)$ consisting of eigenfunctions such that
\[
\phi_k^V\in H_0^1(\Omega)\cap H^2(\Omega),\quad A^V\phi_k^V=\lambda_k^V\phi_k^V,\quad k\ge 1. 
\]
Set
\[
\psi_k^V:=\partial_{\nu_g} \phi_k^V,\quad k\ge 1.
\]

Modifying slightly the proof of \eqref{ee}, we obtain the following inequality
\begin{equation}\label{po1}
\vartheta^{-1}k^{\frac{2}{n}}\le \lambda_k^V\le \mathfrak{c}+\vartheta k^{\frac{2}{n}},\quad k\ge 1,\; V\in \mathbf{V},
\end{equation}
where $\vartheta$ is as in \eqref{ee}.

For $V\in \mathbf{V}$, $\lambda \in \rho(-A^V)$, $\varphi \in H^{\frac{3}{2}}(\Gamma)$, $u^V(\lambda)(\varphi)$ will denote the solution of the BVP
\[
(-\Delta_g+V+\lambda)u=0\; \mathrm{in}\; \Omega,\quad u_{|\Gamma}=\varphi.
\]
The family $(\Lambda^V(\lambda))_{\lambda \in \rho(-A^V)}$ of DtN maps associated with $V$ is defined as follows
\[
\Lambda^V(\lambda):\varphi \in H^{\frac{3}{2}}(\Gamma)\mapsto \partial_{\nu_g}u^V(\lambda)(\varphi)\in L^2(\Gamma).
\]

Unless otherwise stated, $\mathbf{c}>0$ will represent a generic constant depending only on $n$, $\Omega$, $g$, and $\mathfrak{c}$.

Similarly to Lemma \ref{lem2}, we verify that
\[
u^V(\lambda)(\varphi)=-\sum_{k\ge 1}\frac{\langle \varphi|\psi_k^V\rangle_g}{\lambda_k^V+\lambda}\phi_k^V.
\]
In particular, we have
\begin{equation}\label{po2}
\|u^V(\lambda)(\varphi)\|_{0,2}\le \|u^V(0)(\varphi)\|_{0,2}, \quad V\in \mathbf{V},\; \lambda \ge 0,\; \varphi \in H^{\frac{3}{2}}(\Gamma).
\end{equation}

Let $\tilde{E}\varphi \in H^1(\Omega)$ be the unique element of $H^1(\Omega)$ such that  $\|\tilde{E}\varphi \|_{1,2}=\|\varphi\|_{\frac{1}{2},2}$, where
\[
\|\varphi\|_{\frac{1}{2},2}:=\min\{\|h\|_{1,2},\; h\in H^1(\Omega),\; h_{|\Gamma}=\varphi\}
\]
is the quotient norm on $H^{\frac{1}{2}}(\Gamma)$ (which is equivalent to the usual norm of $H^{\frac{1}{2}}(\Gamma)$).

We verify that $u^V(0)(\varphi)=\tilde{E}\varphi +w$, where $w\in H_0^1(\Omega)$ is the unique weak solution of the BVP
\[
(-\Delta_g+V)w=-(-\Delta_g+V)\tilde{E}\varphi \; \mathrm{in}\; \Omega,\quad w_{|\Gamma}=0.
\]
Hence
\[
\int_\Omega(|\nabla_gw|_g^2+V|w|^2)dV_g=-\int_\Omega (\langle\nabla_g\tilde{E}\varphi,\nabla_g\overline{w}\rangle_g+V\tilde{E}\varphi \overline{w})dV_g,
\]
from which we obtain
\[
\|w\|_{1,2}\le \mathbf{c}\|\varphi\|_{\frac{1}{2},2}
\]
and then
\begin{equation}\label{po3}
\|u^V(0)(\varphi)\|_{1,2}\le \mathbf{c}\|\varphi\|_{\frac{1}{2},2}.
\end{equation}
Using \eqref{po3} in \eqref{po2} yields
\begin{equation}\label{po4}
\|u^V(\lambda)(\varphi)\|_{0,2}\le \mathbf{c}\|\varphi\|_{\frac{1}{2},2}, \quad V\in \mathbf{V},\; \lambda \ge 0,\; \varphi \in H^{\frac{3}{2}}(\Gamma).
\end{equation}

Henceforth, we use the notation $R^V(\lambda):=(A^V+\lambda)^{-1}$, $\lambda \in \rho(-A^V)$, and we recall that
\[
R^V(\lambda)f=\sum_{k\ge 1}\frac{(f|\phi_k^V)_g}{\lambda_k^V+\lambda}\phi_k,\quad f\in L^2(\Omega).
\]
In particular, we have
\begin{equation}\label{po5}
\|R^V(\lambda)f\|_{0,2,g}\le \lambda^{-1}\|f\|_{0,2,g},\quad \lambda \ge 1, \; f\in L^2(\Omega).
\end{equation}

We proceed as for Proposition \ref{prohol} to show that, for all  $V\in \mathbf{V}$ and $\varphi \in H^{\frac{3}{2}}(\Gamma)$, the mapping $\lambda \in \rho(-A^V)\mapsto u^V(\lambda)(\varphi)\in H^2(\Omega)$ is holomorphic.

As in previous sections, to simplify the notation, any quantity of the form $X_k^{V_j}$ will be written as $X_k^j$, where $j=1,2$ and $k \ge 1$.

\begin{lemma}\label{lempo1}
Let $V_1,V_2\in \mathbf{V}$, $\lambda \ge 1$ and $\varphi \in H^{\frac{3}{2}}(\Gamma)$. Then we have for all integer $j\ge 0$
\begin{equation}\label{po15}
\| \lambda^j[(\Lambda^1)^{(j)}(\lambda)(\varphi)-(\Lambda^2)^{(j)}(\lambda)(\varphi)]\|_{0,2}\le \mathbf{c}_j\lambda^{-\frac{1}{8}}\|\varphi\|_{\frac{1}{2},2}.
\end{equation}
In particular, it holds
\begin{equation}\label{po11}
\lim_{\lambda \rightarrow \infty}\| \lambda^j[(\Lambda^1)^{(j)}(\lambda)(\varphi)-(\Lambda^2)^{(j)}(\lambda)(\varphi)]\|_{0,2}=0.
\end{equation}
\end{lemma}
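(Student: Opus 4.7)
The plan is to introduce $w_j := (u^1)^{(j)}(\lambda)(\varphi) - (u^2)^{(j)}(\lambda)(\varphi)$ and convert bounds on the interior data into bounds on the normal trace by combining elliptic regularity with the $H^1$--$H^2$ interpolation already used in Subsection \ref{sb2.3}. Note that $w_0|_\Gamma = 0$ because both $u^i(\lambda)(\varphi)$ carry the trace $\varphi$, while $(u^i)^{(j)}(\lambda)(\varphi)\in H_0^1(\Omega)\cap H^2(\Omega)$ for $j\ge 1$, so $w_j|_\Gamma = 0$ in all cases. Differentiating the identity $(-\Delta_g+V_i+\lambda)(u^i)^{(j)}=-j(u^i)^{(j-1)}$ in $\lambda$ and subtracting the two versions yields
\begin{equation*}
(-\Delta_g + V_1 + \lambda)w_j = F_j\quad \text{in}\;\Omega,\qquad w_j|_\Gamma = 0,
\end{equation*}
where $F_0 := -(V_1-V_2)u^2(\lambda)(\varphi)$ and $F_j := -(V_1-V_2)(u^2)^{(j)}(\lambda)(\varphi) - j w_{j-1}$ for $j\ge 1$.

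The next step is to extract the $\lambda^{-1/8}$ gain. Testing the equation against $\overline{w_j}$, using $V_1\ge 0$ and Poincar\'e's inequality, gives for $\lambda\ge 1$
\begin{equation*}
\|w_j\|_{0,2} \le \mathbf{c}\lambda^{-1}\|F_j\|_{0,2},\qquad \|w_j\|_{1,2}\le \mathbf{c}\lambda^{-1/2}\|F_j\|_{0,2}.
\end{equation*}
Rewriting the PDE as $-\Delta_g w_j = F_j - (V_1+\lambda)w_j$ and invoking Lemma \ref{lemEae} yields $\|w_j\|_{2,2}\le \mathbf{c}\|F_j\|_{0,2}$, since the term $\lambda\|w_j\|_{0,2}$ is absorbed via the preceding $L^2$ estimate. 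The interpolation inequality of Subsection \ref{sb2.3} combined with the continuity of the trace $w\in H^{7/4}(\Omega)\mapsto \partial_{\nu_g}w\in L^2(\Gamma)$ then gives
\begin{equation*}
\|\partial_{\nu_g}w_j\|_{0,2}\le \mathbf{c}\|w_j\|_{1,2}^{1/4}\|w_j\|_{2,2}^{3/4}\le \mathbf{c}\lambda^{-1/8}\|F_j\|_{0,2}.
\end{equation*}

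The third step is to bound $\|F_j\|_{0,2}$ by induction on $j$. For $j=0$, \eqref{po4} delivers $\|F_0\|_{0,2}\le \mathbf{c}\|\varphi\|_{\frac{1}{2},2}$. For $j\ge 1$, the identity $(u^2)^{(j)}(\lambda)(\varphi) = (-1)^j j!(R^2(\lambda))^j u^2(\lambda)(\varphi)$, derived as in Proposition \ref{prohol}, combined with \eqref{po5} and \eqref{po4} yields $\|(u^2)^{(j)}(\lambda)(\varphi)\|_{0,2}\le \mathbf{c}_j\lambda^{-j}\|\varphi\|_{\frac{1}{2},2}$; the induction hypothesis together with $\|w_{j-1}\|_{0,2}\le \lambda^{-1}\|F_{j-1}\|_{0,2}$ handles the $jw_{j-1}$ term, so $\|F_j\|_{0,2}\le \mathbf{c}_j\lambda^{-j}\|\varphi\|_{\frac{1}{2},2}$. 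Multiplying the preceding trace estimate by $\lambda^j$ produces \eqref{po15}, and \eqref{po11} follows by letting $\lambda\to\infty$. The main delicate point is ensuring that the $H^2$ bound does not degrade in $\lambda$, which is precisely why the equation is recast so that Lemma \ref{lemEae} can be applied after absorbing the $(V_1+\lambda)w_j$ term via the $L^2$ energy estimate.
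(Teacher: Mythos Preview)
Your proof is correct and follows essentially the same route as the paper: both arguments introduce the difference $w_j=(u^1)^{(j)}(\lambda)(\varphi)-(u^2)^{(j)}(\lambda)(\varphi)$, derive the inhomogeneous Dirichlet problem it solves, obtain the $L^2$ and $H^2$ bounds $\|w_j\|_{0,2}\le \mathbf{c}_j\lambda^{-(j+1)}\|\varphi\|_{\frac{1}{2},2}$ and $\|w_j\|_{2,2}\le \mathbf{c}_j\lambda^{-j}\|\varphi\|_{\frac{1}{2},2}$, interpolate to $H^{7/4}$, and take the normal trace. The only cosmetic differences are that you package the right-hand side as $F_j$ and use the $H^1$--$H^2$ interpolation of Subsection~\ref{sb2.3}, whereas the paper interpolates directly between the $L^2$ bound \eqref{po9} and the $H^2$ bound \eqref{po10}; both yield the same exponent $\lambda^{-1/8}$.
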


\begin{proof}
Clearly, we have
\begin{equation}\label{dif}
u(\lambda):= u^1(\lambda)(\varphi)-u^2(\lambda)(\varphi)=R^1(\lambda)((q_2-q_1)u^2(\lambda)(\varphi)).
\end{equation}
Combining \eqref{po4} and \eqref{po5}, we get
\begin{equation}\label{po6}
\|u(\lambda)\|_{0,2}\le \mathbf{c}\lambda^{-1}\|\varphi\|_{\frac{1}{2},2}.
\end{equation}
Using the usual a priori estimate in $H^2(\Omega)$, we verify that 
\[
\|u(\lambda)\|_{2,2}\le \mathbf{c}( \lambda \|u(\lambda)\|_{0,2}+\|(q_2-q_1)u^2(\lambda)(\varphi)\|_{0,2}),
\]
which, in light of \eqref{po4} and \eqref{po6}, implies
\begin{equation}\label{po7}
\|u(\lambda)\|_{2,2}\le \mathbf{c}\|\varphi\|_{\frac{1}{2},2}.
\end{equation}

As
\[
(-\Delta_g+V_2+\lambda )(u^2)^{(j)}(\varphi)=-j(u^2)^{(j-1)}(\varphi),\; \mathrm{in}\; \quad (u^2)^{(j)}(\varphi)_{|\Gamma}=0,\quad j\ge 0,
\]
we proceed as above to show the following inequality
\begin{equation}\label{po8}
\|(u^2)^{(j)}(\lambda)\|_{0,2}\le \mathbf{c}_j\lambda^{-j}\|\varphi\|_{\frac{1}{2},2},\quad j\ge 0.
\end{equation}
Then \eqref{po8} and an induction in $j$ allows us to get the following inequalities
\begin{align}
&\|u^{(j)}(\lambda)\|_{0,2}\le \mathbf{c}_j\lambda^{-(j+1)}\|\varphi\|_{\frac{1}{2},2},\quad j\ge 0,\label{po9}
\\
&\|u^{(j)}(\lambda)\|_{2,2}\le \mathbf{c}_j\lambda^{-j}\|\varphi\|_{\frac{1}{2},2},\quad j\ge 0.\label{po10}
\end{align}
By interpolation, \eqref{po9} and \eqref{po10} yield
\[
\|u^{(j)}(\lambda)\|_{\frac{7}{4},2}\le \mathbf{c}_j\lambda^{-(j+\frac{1}{8})}\|\varphi\|_{\frac{1}{2},2},\quad j\ge 0.
\]
This and the continuity of the trace operator $w\in H^{\frac{7}{4}}(\Omega)\mapsto \partial_{\nu_g} w\in L^2(\Gamma)$ gives \eqref{po15}.
\end{proof}

Let $p$ and $q$ be as in Section \ref{s1} and, for $V_1,V_2\in \mathbf{V}$, define
\[
\delta(V_1,V_2)=\|(\lambda_k^1-\lambda_k^2)\|_{\ell^p}+\|(\psi_k^1-\psi_k^2)\|_{\ell^q(L^2(\Gamma))}.
\]

Given \eqref{po11}, we mimic the proof of Theorem \ref{mt1} in order to obtain the following result.

\begin{theorem}\label{thmpo1}
Let $V_1,V_2\in \mathbf{V}$ such that $\delta (V_1,V_2)<\infty$ and $j\ge 0$ be an integer. Then for all $\varphi\in H^{\frac{3}{2}}(\Gamma)$ the following inequality holds
\begin{equation}\label{po12}
\|(\Lambda^1)^{(j)}(0)(\varphi)-(\Lambda^2)^{(j)}(0)(\varphi)\|_{0,2}\le \mathbf{c}_j \delta(V_1,V_2)\|\varphi\|_{0,2}.
\end{equation}
\end{theorem}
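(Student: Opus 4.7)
\textbf{Proof plan for Theorem \ref{thmpo1}.} The strategy is to mimic the proof of Theorem \ref{mt1}, with simplifications afforded by the fact that $A^{V_1}$ and $A^{V_2}$ share the same principal part $-\Delta_g$. First I would extend Proposition \ref{prohol} to the potential case. The essential input is the trace bound $\|\psi_k^V\|_{0,2}\le \mathbf{c}\, k^{7/(4n)}$, which follows from the elliptic a priori estimates of Subsection \ref{sb2.3} applied to the equation $-\Delta_g\phi_k^V=(\lambda_k^V-V)\phi_k^V$, combined with the Weyl-type asymptotic \eqref{po1}. The remainder of Proposition \ref{prohol} then carries over verbatim, yielding for $j>(n+3)/4$ the representation
\[
(\Lambda^V)^{(j)}(\lambda)(\varphi)=(-1)^{j+1}j!\sum_{k\ge 1}\frac{\langle\varphi|\psi_k^V\rangle_g}{(\lambda_k^V+\lambda)^{j+1}}\psi_k^V,
\]
with convergence in $L^2(\Gamma)$ uniformly in $\lambda$ on bounded subsets of $\rho(-A^V)$.

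Next, fix $\varphi\in H^{\frac{3}{2}}(\Gamma)$ with $\|\varphi\|_{0,2}=1$, an integer $m\ge 0$, set $j_0=\lfloor (n+3)/4\rfloor+1$, and write $\Lambda(\lambda):=\Lambda^1(\lambda)(\varphi)-\Lambda^2(\lambda)(\varphi)$. Applying Taylor's formula to $t\mapsto \Lambda^{(m)}((1-t)\lambda)$ at $t=1$ as in \eqref{o10}, one gets, for $\lambda>0$,
\[
\Lambda^{(m)}(0)=\sum_{j=0}^{j_0-1}(-\lambda)^j\Lambda^{(m+j)}(\lambda)+\Upsilon(\lambda),
\]
where $\Upsilon(\lambda)$ is the integral remainder of order $j_0$. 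By \eqref{po11} together with the trivial bound $\|\lambda^j\Lambda^{(m+j)}(\lambda)\|_{0,2}\le \|\lambda^{m+j}\Lambda^{(m+j)}(\lambda)\|_{0,2}$ valid for $\lambda\ge 1$, each term in the finite sum tends to zero in $L^2(\Gamma)$ as $\lambda\to\infty$. Hence it suffices to bound $\|\Upsilon(\lambda)\|_{0,2}$ uniformly in $\lambda$.

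For this, I substitute the series from the first step into the remainder and decompose $\Upsilon(\lambda)=\sum_k\bigl(d_k^1\langle\varphi|\psi_k^1\rangle_g\psi_k^1-d_k^2\langle\varphi|\psi_k^2\rangle_g\psi_k^2\bigr)$, where the coefficients
\[
d_k^\ell=\frac{(-1)^{m+1}(j_0+m)!}{(j_0-1)!}\int_0^\lambda\frac{s^{j_0-1}}{(\lambda_k^\ell+s)^{j_0+m+1}}\,ds
\]
satisfy $|d_k^\ell|\le \mathbf{c}_mk^{-2/n}$ and $|d_k^1-d_k^2|\le \mathbf{c}_mk^{-4/n}|\lambda_k^1-\lambda_k^2|$ by Lemma \ref{lemte} and \eqref{po1}. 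The three-term splitting
\begin{align*}
\Upsilon(\lambda)&=\sum_k(d_k^1-d_k^2)\langle\varphi|\psi_k^1\rangle_g\psi_k^1+\sum_k d_k^2\langle\varphi|\psi_k^1-\psi_k^2\rangle_g\psi_k^1\\
&\quad+\sum_k d_k^2\langle\varphi|\psi_k^2\rangle_g(\psi_k^1-\psi_k^2),
\end{align*}
combined with the extended trace bound and H\"older's inequality with exponents dictated by $p<\frac{2n}{2n-1}$ and $q<\frac{4n}{4n-1}$, produces $\|\Upsilon(\lambda)\|_{0,2}\le \mathbf{c}_m\delta(V_1,V_2)$, by the very computations leading to \eqref{r3}. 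Letting $\lambda\to\infty$ and renaming $m$ as $j$ yields the conclusion.

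The main (in fact, only genuine) obstacle is the extension of Lemma \ref{trb1} to the perturbed eigenfunctions $\phi_k^V$: the $L^\infty$ bound on $V$ alters neither the $O(k^{2/n})$ Weyl growth nor the $H^2$ a priori estimates, so this step is routine. Crucially, no analog of the $\delta_0$ contribution is needed, because Lemma \ref{lempo1} is established directly through the resolvent identity \eqref{dif} — which uses that $A^{V_1}$ and $A^{V_2}$ have the same principal part — rather than via the eigenfunction expansion that forces the appearance of $\delta_0$ in Lemma \ref{lemo1}.
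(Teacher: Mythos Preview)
Your proposal is correct and follows essentially the same route as the paper, which simply says ``Given \eqref{po11}, we mimic the proof of Theorem \ref{mt1}.'' You have accurately unpacked what that mimicry entails---extending the trace bound of Lemma \ref{trb1} and the series representation of Proposition \ref{prohol} to $A^V$, then running the Taylor-remainder argument with Lemma \ref{lempo1} in place of Lemma \ref{lemo1}---and you correctly pinpoint why the $\delta_0$ term disappears: the resolvent identity \eqref{dif} exploits the shared principal part to give \eqref{po11} directly, bypassing the eigenfunction cross-terms $(\phi_k^2|\phi_\ell^1)_{g_1}$ that force $\delta_0$ into the metric case.
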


Fix $0<\eta<\frac{1}{2}$ and define
\[
\mathbf{V}_\eta=\{V\in L^\infty(\Omega,\mathbb{R}) \cap H^\eta(\Omega);\; 0\le V\le \mathfrak{c},\; \|V\|_{\eta,2}\le \mathfrak{c}\}.
\]
Also define
\[
\Phi_{\varsigma}=|\ln(\delta+| \ln \delta|^{-1}) |^{-\frac{\eta}{4}}\chi_{(0,\varsigma]}+\delta \chi_{(\varsigma,\infty)},\quad 0<\varsigma<e^{-1}.
\]
We extend $\Psi_{\varsigma}$ by continuity at $\delta=0$ by setting $\Psi_{\varsigma}(0)=0$.

The following theorem is obtained by combining Theorem \ref{thmpo1} with $j=0$ and \cite[Theorem 1.1]{CS}.

\begin{theorem}\label{thmpo2}
Assume that $(\Omega,g)$ is admissible with $\Omega\subset \Omega'\times \mathbb{R}$. For all $V_1,V_2\in \mathbf{V}_\eta$ such that $\delta(V_1,V_2)<\infty$ we have
\[
\|V_1-V_2\|_{L^2(\mathbb{R},H^{-1}(\Omega')}\le c\Phi_\varsigma\left(\delta(V_1,V_2)\right),
\]
where the constants $c>0$ and $\varsigma >0$ depend only on $n$, $\Omega'$, $\Omega$, $g$, $\mathfrak{c}$ and $\eta$.
\end{theorem}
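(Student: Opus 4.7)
The plan is to follow the two-step strategy already used for Theorem \ref{thm2.1} and Theorem \ref{thm1}: first translate the spectral distance $\delta(V_1,V_2)$ into a quantitative bound on the elliptic DtN map difference $\Lambda^1(0)-\Lambda^2(0)$ in a suitable operator topology, then feed this bound into a known log-type stability estimate that recovers the potential from the elliptic DtN map.

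For the first step, I would apply Theorem \ref{thmpo1} with $j=0$, which yields
\[
\|\Lambda^1(0)(\varphi) - \Lambda^2(0)(\varphi)\|_{0,2} \le \mathbf{c}_0 \, \delta(V_1,V_2) \, \|\varphi\|_{0,2}, \qquad \varphi \in H^{\frac{3}{2}}(\Gamma).
\]
Because $H^{\frac{3}{2}}(\Gamma)$ is dense in $H^{\frac{1}{2}}(\Gamma)$ and $L^2(\Gamma)$ embeds continuously into $H^{-\frac{1}{2}}(\Gamma)$, the operator $\Lambda^1(0)-\Lambda^2(0)$ extends by continuity to an element of $\mathscr{B}(H^{\frac{1}{2}}(\Gamma),H^{-\frac{1}{2}}(\Gamma))$ satisfying
\[
\|\Lambda^1(0)-\Lambda^2(0)\|_{\mathscr{B}(H^{\frac{1}{2}}(\Gamma),H^{-\frac{1}{2}}(\Gamma))} \le \mathbf{c} \, \delta(V_1,V_2).
\]
This is precisely the density/extension step already carried out at the end of the proofs of Theorem \ref{thm2.1} and Theorem \ref{thm1}.

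For the second step, I would invoke \cite[Theorem 1.1]{CS}, which, under the admissibility assumption $\Omega\subset \Omega'\times \mathbb{R}$ and for potentials $V_1,V_2\in \mathbf{V}_\eta$, provides a stability estimate of the form
\[
\|V_1-V_2\|_{L^2(\mathbb{R},H^{-1}(\Omega'))} \le c' \, \Phi_\varsigma\bigl(\|\Lambda^1(0)-\Lambda^2(0)\|_{\mathscr{B}(H^{\frac{1}{2}}(\Gamma),H^{-\frac{1}{2}}(\Gamma))}\bigr).
\]
Combining this with the previous display and slightly shrinking $\varsigma$ in order to absorb the multiplicative constant $\mathbf{c}$ through the monotone modulus $\Phi_\varsigma$ yields the claim. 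The analytical chain is otherwise mechanical; the only point requiring care is the rescaling of $\varsigma$, since $\Phi_\varsigma$ has the delicate $|\ln(\delta+|\ln \delta|^{-1})|^{-\eta/4}$ behaviour near $\delta=0$ and one must check that $\Phi_\varsigma(\mathbf{c}\,\delta)\le \mathbf{c}'\,\Phi_{\varsigma'}(\delta)$ for some $\varsigma'\le \varsigma$ and $\delta$ small. Given the explicit form and the monotonicity of $\Phi_\varsigma$, this is routine and exactly mirrors the corresponding step in the completion of the proof of Theorem \ref{thm2.1}.
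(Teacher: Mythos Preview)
Your proposal is correct and matches the paper's approach exactly: the paper simply states that Theorem~\ref{thmpo2} is obtained by combining Theorem~\ref{thmpo1} with $j=0$ and \cite[Theorem 1.1]{CS}, which is precisely the two-step argument you describe (including the density passage from $H^{\frac{3}{2}}(\Gamma)$ to $H^{\frac{1}{2}}(\Gamma)$, already carried out in the completion of the proof of Theorem~\ref{thm2.1}).
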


Next, we show that by replacing $\delta(V_1,V_2)$ with $\delta(V_1,V_2)^\theta$ in \eqref{po12}, for some $0<\theta<1$, the assumption $\delta(V_1,V_2)<\infty$ can be removed, provided that $(\Omega,g)$ is simple.

Define for $V_1,V_2\in \mathbf{V}$
\[
\overline{\delta}(V_1,V_2)=\sum_{k\ge 1}\left[k^{-\frac{4k_0+1}{2n}}|\lambda_k^1-\lambda_k^2|+k^{-\frac{4k_0-3}{2n}}\|\psi_k^1-\psi_k^2\|_{0,2} \right],
\]
where $k_0$ is the smallest integer $>\frac{n+3}{4}$ chosen in such a way that $\overline{\delta}(V_1,V_2)<\infty$.

\begin{theorem}\label{thmpo3}
Let $V_1,V_2\in \mathbf{V}$  and $j\ge 0$ be an integer. Then for all $\varphi\in H^{\frac{3}{2}}(\Gamma)$ the following inequality holds
\begin{equation}\label{po12.1}
\|(\Lambda^1)^{(j)}(0)(\varphi)-(\Lambda^2)^{(j)}(0)(\varphi)\|_{0,2}\le \mathbf{c}_j \overline{\delta}(V_1,V_2)^\sigma \|\varphi\|_{\frac{1}{2},2},
\end{equation}
where $\sigma=\frac{1}{1+k_0}$.
\end{theorem}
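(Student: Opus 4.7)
The plan is to follow the pattern of the proof of Theorem~\ref{mt1}, but with the Taylor expansion order $j_0$ raised to $k_0$. The key observation is that the weights $k^{-(4k_0+1)/(2n)}$ on $|\lambda_k^1-\lambda_k^2|$ and $k^{-(4k_0-3)/(2n)}$ on $\|\psi_k^1-\psi_k^2\|_{0,2}$ in $\overline{\delta}(V_1,V_2)$ are tuned exactly to match the powers of $k$ produced by Weyl's law \eqref{ee} and the trace bound \eqref{el10}, so the summation in the remainder is absolute and no H\"older step against $\ell^p$ or $\ell^q$ norms is needed---this is what allows dropping the $\delta<\infty$ hypothesis. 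The price is an extra factor $\lambda^{k_0}$ in the remainder, to be balanced against the $\lambda$-decay from Lemma~\ref{lempo1}, which yields the exponent $\sigma=1/(1+k_0)$.

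Fix $\varphi\in H^{3/2}(\Gamma)$ and $\lambda>0$ and apply Taylor's formula of order $k_0$ to $\mu\mapsto [(\Lambda^1)^{(j)}-(\Lambda^2)^{(j)}](\mu)(\varphi)$ at $\mu=\lambda$, evaluated at $\mu=0$:
\[
[(\Lambda^1)^{(j)}-(\Lambda^2)^{(j)}](0)(\varphi)=\sum_{\ell=0}^{k_0-1}(-\lambda)^\ell\,[(\Lambda^1)^{(j+\ell)}-(\Lambda^2)^{(j+\ell)}](\lambda)(\varphi)+\Upsilon.
\]
By Lemma~\ref{lempo1}, each term of the sum is controlled by $\mathbf{c}_{j,\ell}\lambda^{-j-1/8}\|\varphi\|_{1/2,2}$. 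For the remainder $\Upsilon$, use that $j+k_0>(n+3)/4$ to apply the series representation \eqref{ser3} to $(\Lambda^V)^{(j+k_0)}$, then change variables $s=(1-t)\lambda$ to obtain
\[
\Upsilon=\sum_{k\ge1}\bigl(I_k^1\langle\varphi|\psi_k^1\rangle_g\psi_k^1-I_k^2\langle\varphi|\psi_k^2\rangle_g\psi_k^2\bigr),\quad I_k^V=\mathbf{c}_{j,k_0}\int_0^\lambda\frac{s^{k_0-1}}{(\lambda_k^V+s)^{j+k_0+1}}\,ds.
\]
Split $\Upsilon=\Upsilon_1+\Upsilon_2+\Upsilon_3$ as in the proof of Theorem~\ref{mt1} (isolating the differences $\lambda_k^1-\lambda_k^2$ and $\psi_k^1-\psi_k^2$), use the crude bounds $I_k^V\le C\lambda^{k_0}(\lambda_k^V)^{-(j+k_0+1)}$ and $|I_k^1-I_k^2|\le C\lambda^{k_0}|\lambda_k^1-\lambda_k^2|/(\lambda_k)^{j+k_0+2}$, and combine with $\lambda_k\ge c k^{2/n}$ and $\|\psi_k\|_{0,2}\le c k^{7/(4n)}$: a direct calculation then shows that the exponents of $k$ produced in $\Upsilon_1,\Upsilon_2,\Upsilon_3$ coincide with the weights of $\overline{\delta}$ when $j=0$ and are strictly more negative for $j\ge1$, giving $\|\Upsilon\|_{0,2}\le\mathbf{c}_{j,k_0}\lambda^{k_0}\overline{\delta}(V_1,V_2)\|\varphi\|_{1/2,2}$.

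Combining the two estimates, $\|[(\Lambda^1)^{(j)}-(\Lambda^2)^{(j)}](0)(\varphi)\|_{0,2}\le\mathbf{c}_{j,k_0}(\lambda^{-1}+\lambda^{k_0}\overline{\delta})\|\varphi\|_{1/2,2}$, and choosing $\lambda=\overline{\delta}(V_1,V_2)^{-1/(1+k_0)}$ gives the claimed inequality with $\sigma=1/(1+k_0)$. The main obstacle is upgrading the sum bound from the $\lambda^{-1/8}$ supplied directly by Lemma~\ref{lempo1} (which comes from the $H^{7/4}$ trace) to the sharper $\lambda^{-1}$ required for the optimization to land on $\sigma=1/(1+k_0)$; this calls for an argument beyond the blunt $H^{7/4}$-interpolation, for instance estimating the normal trace of $u^1-u^2$ directly in $L^2(\Gamma)$ via Green's formula against an extension of a test function on $\Gamma$, exploiting the vanishing of $u^1-u^2$ on the boundary together with the optimal decay $\|u^1(\lambda)-u^2(\lambda)\|_{0,2}\le C\lambda^{-1}\|\varphi\|_{1/2,2}$ from \eqref{po6}.
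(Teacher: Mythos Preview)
Your overall architecture coincides with the paper's: Taylor-expand $\Lambda^1-\Lambda^2$ to order $k_0$ around $\lambda$, control the polynomial part by Lemma~\ref{lempo1}, express the remainder via \eqref{ser3}, bound its three pieces using the crude estimates $|d_k^\ell|\le\mathbf{c}_m\lambda^{k_0}k^{-2(k_0+1)/n}$ and $|d_k^1-d_k^2|\le\mathbf{c}_m\lambda^{k_0}k^{-2(k_0+2)/n}|\lambda_k^1-\lambda_k^2|$ together with \eqref{el10}, and then optimize in $\lambda$. The point you make about the weights in $\overline{\delta}$ being tuned so that the series converge absolutely (no H\"older step, hence no assumption $\delta<\infty$) is exactly the mechanism the paper relies on.

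Where you diverge from the paper is in the last paragraph. The paper does \emph{not} upgrade the polynomial-part bound to $\lambda^{-1}$; it keeps the $\lambda^{-1/8}$ coming from \eqref{po15} and arrives at
\[
\|(\Lambda^1)^{(j)}(0)(\varphi)-(\Lambda^2)^{(j)}(0)(\varphi)\|_{0,2}\le \mathbf{c}_j\bigl(\lambda^{-1/8}+\lambda^{k_0}\overline{\delta}(V_1,V_2)\bigr)\|\varphi\|_{1/2,2},
\]
then balances $\lambda^{-1/8}=\lambda^{k_0}\overline{\delta}(V_1,V_2)$. So your ``main obstacle'' is not one the paper resolves: the paper's own optimization yields $\overline{\delta}^{1/(1+8k_0)}$, and the stated value $\sigma=1/(1+k_0)$ appears to be a slip. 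Your instinct that $\lambda^{-1/8}$ is what \eqref{po15} actually delivers is correct; there is no hidden argument in the paper producing $\lambda^{-1}$, and your Green's-formula suggestion, while plausible in spirit, is not needed to match the paper's proof and is not carried out here. If you simply replace your ``$\lambda^{-1}$'' by ``$\lambda^{-1/8}$'' in the final optimization, your argument is the paper's argument.
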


\begin{proof}
Let $d_k^\ell$ be as in the proof of Theorem \ref{mt1} in which $j_0$ is replaced by $k_0$. That is, where $\lambda >0$ and $m\ge 0$ is an integer,
\[
d_k^\ell=\frac{(-1)^{m+1}(k_0+m)!}{(k_0-1)!}\int_0^1\frac{(1-t)^{k_0-1}\lambda^{k_0}}{(\lambda_k^\ell +(1-t)\lambda)^{k_0+m+1}}dt,\quad \ell=1,2.
\]
Hence 
\begin{equation}\label{po13}
|d_k^\ell |\le \mathbf{c}_m\lambda^{k_0}k^{-\frac{2(k_0+1)}{n}},\quad \ell=1,2.
\end{equation}
We verify that
\begin{equation}\label{po14}
|d_k^1-d_k^2 |\le \mathbf{c}_m\lambda^{k_0}k^{-\frac{2(k_0+2)}{n}}|\lambda_k^1-\lambda_k^2|.
\end{equation}
Using \eqref{po15}, \eqref{po13} and \eqref{po14},  by modifying slightly the proof of Theorem \ref{mt1}, we obtain that for all $\varphi\in H^{\frac{3}{2}}(\Gamma)$ 
\[
\|(\Lambda^1)^{(j)}(\varphi)-(\Lambda^2)^{(j)}(\varphi)\|_{0,2}\le \mathbf{c}_j\left( \lambda^{-\frac{1}{8}}+\lambda^{k_0}\overline{\delta}(V_1,V_2)\right)\|\varphi\|_{\frac{1}{2},2},\quad \lambda >0.
\]
The expected inequality follows by taking $\lambda$ so that $\lambda^{-\frac{1}{8}}=\lambda^{k_0}\overline{\delta}(V_1,V_2)$.
\end{proof}

For $V_1,V_2\in \mathbf{V}$, define
\[
\delta_\ast(V_1,V_2)=\sum_{k\ge 1}k^{-(2+\frac{5}{2n})}\left[|\lambda_k^1-\lambda_k^2|+\|\psi_k^1-\psi_k^2\|_{0,2}\right] \; (<\infty),
\]
and let
\[
\mathbf{V}_\ast=\{V\in C^\infty(\overline{\Omega},\mathbb{R}) ;\; 0\le V\le \mathfrak{c},\; \|V\|_{1,2}\le \mathfrak{c}\}.
\]

From now on, the notations we use are those of Section \ref{s4}. As in Section \ref{s4}, define the hyperbolic DtN map associated with $q\in \mathbf{V}_\ast$ by
\[
\Pi^V:h\in \mathcal{H}^2((0,\tau), H^{\frac{3}{2}}(\Gamma))\mapsto \partial_{\nu_g}w\in L^2(\Sigma),
\] 
where $w\in H^{2,2}(Q)$ is the solution of the IBVP
\[
\left\{
\begin{array}{ll}
(\partial_t^2-\Delta_g+V)w=0\quad \mathrm{in}\; Q,
\\
w(0,\cdot)=0,
\\
\partial_tw(0,\cdot)=0,
\\
w=h \quad \mathrm{on}\;  \Sigma .
\end{array}
\right.
\]

We proceed as in the proof of Theorem \ref{mt2} to obtain, by using inequality \eqref{po12.1},  that we have for all $h\in \mathcal{H}^{2n+4}((0,\tau), H^{\frac{3}{2}}(\Gamma))$ 
\begin{equation}\label{po16}
\|\Pi^1(h)-\Pi^2(h)\|_{L^2(\Sigma)}\le \mathbf{c}_\tau \left(\overline{\delta}(V_1,V_2)^\sigma+ \delta_\ast(V_1,V_2)\right)\|h\|_{H_{0,}^{2n+4}((0,\tau),H^{\frac{1}{2}}(\Gamma))}.
\end{equation}
Here and henceforth, $\mathbf{c}_\tau$ denotes a generic constant depending only on $n$, $\Omega$, $g$, $\mathfrak{c}$ and $\tau$.

On the other hand, under the assumption that $(\Omega,g)$ is simple, we have as a consequence of \cite[Proposition 5.8]{LQSY} 
\begin{equation}\label{po17}
\|V_1-V_2\|_{L^2(\Omega)}\le \|\Pi^1-\Pi^2\|_{\mathscr{B}(H_{0,}^{2n+4}((0,\tau),H^{\frac{1}{2}}(\Gamma)),L^2(\Sigma))}^\gamma,
\end{equation}
where the constant $\gamma\in (0,1)$ depend only on $n$, $\Omega$, $g$ and $\mathfrak{c}$.

In light of \eqref{po16} and \eqref{po17} and the density of $\mathcal{H}^{2n+4}((0,\tau),H^{\frac{1}{2}}(\Gamma))$ in $H_{0,}^{2n+4}((0,\tau),H^{\frac{1}{2}}(\Gamma))$, we can state the following result.

\begin{theorem}\label{thmpo4}
Assume that $(\Omega,g)$ is simple. For all $V_1,V_2\in \mathbf{V}_\ast$  we have
\[
\|V_1-V_2\|_{L^2(\Omega)}\le c\left(\overline{\delta}(V_1,V_2)^\sigma+ \delta_\ast(V_1,V_2)\right)^{\gamma},
\]
where $\gamma\in (0,1)$ is as in \eqref{po17}.
\end{theorem}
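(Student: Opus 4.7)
The statement is essentially a direct synthesis of the two inequalities already established immediately before its formulation, namely \eqref{po16} and \eqref{po17}, plus a density argument that has also been flagged in the text. So the plan is short.

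First, I would invoke the stability estimate \eqref{po17}, which is taken from \cite[Proposition 5.8]{LQSY} and uses the simplicity assumption on $(\Omega,g)$: it gives
\[
\|V_1-V_2\|_{L^2(\Omega)}\le \|\Pi^1-\Pi^2\|_{\mathscr{B}(H_{0,}^{2n+4}((0,\tau),H^{\frac{1}{2}}(\Gamma)),L^2(\Sigma))}^\gamma,
\]
so it suffices to control the operator norm on the right-hand side by $\overline{\delta}(V_1,V_2)^\sigma+ \delta_\ast(V_1,V_2)$ (up to a $\tau$-dependent constant) to conclude, since the map $x\mapsto x^\gamma$ is monotone.

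Second, \eqref{po16} already provides the required bound, but only for test data $h\in \mathcal{H}^{2n+4}((0,\tau), H^{\frac{3}{2}}(\Gamma))$, which is a strictly smaller space than $H_{0,}^{2n+4}((0,\tau),H^{\frac{1}{2}}(\Gamma))$. Since the operator $\Pi^1-\Pi^2$ is \emph{a priori} only defined on the smaller space, I would use the density of $\mathcal{H}^{2n+4}((0,\tau),H^{\frac{1}{2}}(\Gamma))$ in $H_{0,}^{2n+4}((0,\tau),H^{\frac{1}{2}}(\Gamma))$, together with the uniform bound provided by \eqref{po16}, to extend $\Pi^1-\Pi^2$ by continuity as a bounded operator from $H_{0,}^{2n+4}((0,\tau),H^{\frac{1}{2}}(\Gamma))$ to $L^2(\Sigma)$ and to carry the same estimate to this extension. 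This yields
\[
\|\Pi^1-\Pi^2\|_{\mathscr{B}(H_{0,}^{2n+4}((0,\tau),H^{\frac{1}{2}}(\Gamma)),L^2(\Sigma))}\le \mathbf{c}_\tau\left(\overline{\delta}(V_1,V_2)^\sigma+ \delta_\ast(V_1,V_2)\right).
\]

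Inserting this into \eqref{po17} gives
\[
\|V_1-V_2\|_{L^2(\Omega)}\le \mathbf{c}_\tau^{\gamma}\left(\overline{\delta}(V_1,V_2)^\sigma+ \delta_\ast(V_1,V_2)\right)^{\gamma},
\]
which is the desired inequality with $c=\mathbf{c}_\tau^{\gamma}$. There is no genuine obstacle here; the only point requiring a touch of care is making the density/extension argument explicit, so that one is entitled to apply \eqref{po17}, whose right-hand side involves the operator norm on the larger space.
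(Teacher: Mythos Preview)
Your proposal is correct and matches the paper's own argument exactly: the paper states the theorem as an immediate consequence of \eqref{po16}, \eqref{po17}, and the density of $\mathcal{H}^{2n+4}((0,\tau),H^{\frac{1}{2}}(\Gamma))$ in $H_{0,}^{2n+4}((0,\tau),H^{\frac{1}{2}}(\Gamma))$, which is precisely the combination you describe. There is nothing to add.
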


Theorem \ref{thmpo4} can be extended to the problem of determining both the magnetic and electric potentials in the magnetic Sch\"odinger operator from the corresponding BSD (see \cite[Theorem 1.1]{LQSY}).

A similar result to that of Theorem \ref{thmpo4} was obtained by the author in \cite{Ch24} for unbounded potentials. More precisely, when $V_1, V_2 \in L^{\frac{n}{2}}(\Omega, \mathbb{R})$ and $V_1 - V_2 \in H^2(\Omega)$ (\cite[Theorem 4.1]{Ch24}). In the case where $g = \mathbf{e}$, the condition $V_1 - V_2 \in H^2(\Omega)$ is replaced by $V_1 - V_2 \in L^n(\Omega)$ (\cite[Theorem 1.2]{Ch24}).

\end{document}